\def\eqnarray{\stepcounter{equation}\let\@currentlabel=\theequation
\global\@eqnswtrue
\tabskip\@centering\let\\=\@eqncr
$$\halign to \displaywidth\bgroup\hfil\global\@eqcnt\z@
  $\displaystyle\tabskip\z@{##}$&\global\@eqcnt\@ne
  \hfil$\displaystyle{{}##{}}$\hfil
  &\global\@eqcnt\tw@ $\displaystyle{##}$\hfil
  \tabskip\@centering&\llap{##}\tabskip\z@\cr}
\def\endeqnarray{\@@eqncr\egroup
      \global\advance\c@equation\m@ne$$\global\@ignoretrue}
\def\@yeqncr{\@ifnextchar [{\@xeqncr}{\@xeqncr[5pt]}}
\newtheorem{theorem}{Theorem}[section]
\newtheorem{lemma}[theorem]{Lemma}
\newtheorem{corollary}[theorem]{Corollary}
\newtheorem{proposition}[theorem]{Proposition}
\theoremstyle{definition}
\newtheorem{definition}[theorem]{Definition}
\newtheorem{assu}[theorem]{Assumption}
\newtheorem*{assumptionD}{Assumption~D}
\newtheorem*{assumptionN}{Assumption~N}
\newtheorem*{assumptionO}{Assumption~$\Omega^c$}
\newtheorem*{assumptionP}{Assumption~P}
\newtheorem{notation}[theorem]{Notation}
\newtheorem{rem}[theorem]{Remark}
\newcommand{\field}[1]{\mathbb{#1}}
\newcommand{\R}{\field{R}}
\newcommand{\W}{\mathrm{W}}
\renewcommand{\L}{\mathrm{L}}
\newcommand{\C}{\mathrm{C}}
\newcommand{\IN}{\field{N}}
\newcommand{\IR}{\field{R}}
\newcommand{\IC}{\field{C}}
\newcommand{\eps}{\varepsilon}
\newcommand{\ii}{\mathrm{i}}
\newcommand{\e}{\mathrm{e}}
\renewcommand{\Re}{\mathrm{Re}}
\renewcommand{\Im}{\mathrm{Im}}
\newcommand{\cJ}{\mathcal{J}}
\renewcommand{\d}{\mathrm{d}}
\newcommand{\Sec}{\mathrm{S}}
\newcommand{\Lop}{\mathcal{L}}
\newcommand{\diam}{\mathrm{diam}}
\newcommand{\loc}{\mathrm{loc}}
\DeclareMathOperator{\dom}{dom}    
\DeclareMathOperator{\dist}{dist}  
\DeclareMathOperator{\supp}{supp}  
\DeclareMathOperator{\dive}{div}   
\DeclareMathOperator*{\essinf}{essinf}
\newcommand{\ta}{\mathfrak{t}}           
\renewcommand{\O}{\Omega}
\newcounter{teller}
\newenvironment{tabel}{\begin{list}%
{\rm  (\alph{teller})\hfill}{\usecounter{teller} \leftmargin=1.1cm
\labelwidth=1.1cm \labelsep=0cm \parsep=0cm}
                      }{\end{list}}
\newcommand{\one}{\mathds{1}}
\newcommand{\sgn}{\mathop{\rm sgn}}
\begin{document}

\title[$\L^p$-theory for complex coefficients]{On the $\L^p$-theory for second-order elliptic operators in divergence form with complex coefficients}

\author{\sc A.F.M. ter Elst}
\address{Department of Mathematics, University of Auckland, Private Bag 92019,
Auckland 1142, New Zealand}
\email{t.terelst@auckland.ac.nz}
\thanks{Part of this work is supported by the Marsden Fund Council from Government
funding, administered by the Royal Society of New Zealand.}

\author{\sc R. Haller-Dintelmann}
\address{Technische Universit\"at Darmstadt, Fachbereich Mathematik,
Schlossgartenstr.\@ 7, 64289 Darmstadt, Germany}
\email{haller@mathematik.tu-darmstadt.de}

\author{\sc J. Rehberg}
\address{Weierstrass Institute for Applied Analysis and Stochastics,
 Mohrenstr.\@ 39, 10117 Berlin, Germany}
\email{rehberg@wias-berlin.de}

\author{\sc P. Tolksdorf}
\address{Universit\'e Paris-Est Cr\'eteil, LAMA UMR CNRS 8050, France}
\email{patrick.tolksdorf@u-pec.fr}
\thanks{The fourth author was partially supported by the project ANR INFAMIE
(ANR-15-CE40-0011)}

\subjclass[2010]{primary: 35J15, 47D06, secondary: 47B44}
\keywords{divergence form operators on open sets, $p$-ellipticity, sectorial
operators, analytic semigroups, maximal regularity, reverse H\"older
inequalities, Gaussian estimates, De Giorgi estimates}

\begin{abstract}
Given a complex, elliptic coefficient function we investigate for which values
of $p$ the corresponding second-order divergence form operator, complemented
with Dirichlet, Neumann or mixed boundary conditions, generates a strongly
continuous semigroup on $\L^p(\Omega)$. Additional properties like analyticity
of the semigroup, $\mathrm{H}^\infty$-calculus and maximal regularity are
also discussed. Finally we prove a perturbation result for real coefficients
that gives the whole range of $p$'s for small imaginary parts of the
coefficients. Our results are based on the recent notion of $p$-ellipticity,
reverse H\"older inequalities and Gaussian estimates for the real coefficients.
\end{abstract}

\begingroup
\makeatletter
\patchcmd{\@settitle}{\uppercasenonmath\@title}{\large}{}{}
\patchcmd{\@setauthors}{\MakeUppercase}{}{}{}
\makeatother
\maketitle
\endgroup

\section{Introduction}
One of the central items when considering elliptic operators is their 'parabolic behaviour', such as the
generator property of (analytic) semigroups (\cite{pazy},~\cite{Grie}) or
maximal parabolic regularity (\cite{Denk},~\cite{kunst}). 
In case of second-order divergence operators and \emph{real measurable}
coefficients very satisfactory results are available even in case of
non-smooth domains and mixed boundary conditions. This is due to the fact that
one can prove upper Gaussian estimates for the semigroup on~$\L^2$, see \cite{tom1}. From this one can
deduce that the semigroup extrapolates to a consistent semigroup on $\L^p$ for all 
$p \in [1,\infty]$, see~\cite[Ch.~7]{Ouh05}. In addition, maximal parabolic
regularity on $\L^p$ for all $p \in (1,\infty)$ can be shown 
(\cite{HiebPruess} and~\cite{coulhon}) and even a bounded
$\mathrm{H}^\infty$-calculus is obtained \cite{duongrobin}. Moreover, it can be shown that these 
semigroups on $\L^p$ are all
contraction semigroups (\cite[Ch.~4]{Ouh05} or~\cite{hoemb}). This then
allows for another proof of a bounded $\mathrm{H}^\infty$-calculus via
\cite{cowling} and for maximal parabolic regularity via~\cite{lambert}.

Unfortunately nearly all of this breaks down when admitting complex
coefficients. The only thing that obviously remains true is the fact that the
$\L^2$ semigroup extrapolates consistently to strongly continuous semigroups on the spaces
$\L^p$ for all $p \in (2_*, 2^*)$, where $2^*$ is the first Sobolev exponent
and $2_* = (2^*)'$ is the dual exponent.
This is a consequence of the inclusion $\W^{1,2} \subset \L^p$ if $p \in
[2, 2^*)$.

Apart of this, compared to the case of real-valued measurable coefficient functions
several severe obstructions appear. We list four of the most striking ones.
First, even if there is a consistent semigroup on $\L^\infty$, this need not be
a contraction semigroup~\cite{ABBO}. Secondly, the
'parabolic maximum principle' does not hold, as was pointed out in~\cite{auscher2}. 
Furthermore, an ingenious example in~\cite{mazya} 
shows that a distributional solution for the elliptic equation
with right-hand side $0$ is not necessarily locally bounded. Finally, the
semigroups may even cease to exist on an $\L^p$ space with finite $p$, see~\cite{davies}.

The aim of this paper is to investigate the following two questions: First,
given a bounded open set $\Omega \subset \R^d$ with $d \geq 2$, and a strongly
elliptic coefficient matrix $\mu \in \L^\infty(\Omega ; \IC^{d \times d})$, for
which $p$ does the elliptic divergence form operator $- A := \nabla \cdot \mu
\nabla$ complemented with appropriate boundary conditions generate a strongly
continuous semigroup on $\L^p$? Secondly, if this is true, what additional
properties, such as analyticity of the semigroup, bounded holomorphic
functional calculus, and maximal parabolic regularity does the operator have?

It is well-known that there exists an $\eps_0 > 0$ such that 
$-A$ generates an analytic semigroup on $\L^p (\Omega)$ for all $p \in (1,\infty)$ with 
\begin{align}
\label{Eq: Sobolev condition}
 \Big\lvert \frac{1}{p} - \frac{1}{2} \Big\rvert < \frac{1}{d} + \eps_0,
\end{align}
see~\cite{Davies_Analyticity, Blunck_Kunstmann, Auscher_Riesz, Tolksdorf}. 
 In the case $d = 2$,
 this condition already covers the $\L^p$-spaces for all $p \in (1 , \infty)$. 
 In general, however, condition~\eqref{Eq: Sobolev condition}
 is sharp, i.e., for each $p \in [1 , \infty)$ that satisfies $\lvert 1 / p - 1 / 2 \rvert > 1 / d$ there exists
a strongly elliptic 
coefficient function $\mu \in \L^{\infty} (\Omega ; \IC^{d \times d})$ such that $- A$ does not generate an analytic semigroup on
 $\L^p (\Omega)$, see~\cite{hofmann}. This raises the issue of quantifying the largeness of $\eps_0$ for given coefficients. 

In their pioneering works~\cite{cialdea/mazya, cialdea/mazya/systems, cialdea/mazya/lame}, Cialdea and Maz'ya 
found a purely algebraic condition between $\mu$ and $p$ that implies that $- A$ is accretive on $\L^p (\Omega)$. Subsequently, this condition was elegantly reformulated by Carbonaro and
 Dragi\v{c}evi\'c~\cite{Carbonaro_Dragicevic} as follows.
A coefficient function $\mu \in \L^{\infty} (\Omega ; \IC^{d \times d})$ is 
called $p$-\emph{elliptic} if there exists a $\lambda_p > 0$ such that 
\[
\Re \langle \mu (x) \xi , \cJ_p (\xi) \rangle \geq \lambda_p \lvert \xi \rvert^2 
\]
for almost every $x \in \Omega$ and $\xi \in \IC^d$,
where
\begin{align*}
 \cJ_p (\xi) \coloneqq 2 \bigg( \frac{\Re(\xi)}{p^{\prime}} + \frac{\ii \Im (\xi)}{p} \bigg) \qquad (\xi \in \IC^d).
\end{align*}
It is shown in~\cite{Carbonaro_Dragicevic}, 
that given a strongly elliptic matrix $\mu \in \L^{\infty} (\Omega ; \IC^{d \times d})$ there exists a unique number $2 < p_0 (\mu) \leq \infty$ such that $\mu$ is $p$-elliptic if and only if $p \in (p_0 (\mu)^{\prime} , p_0 (\mu))$, where $p_0 (\mu)^{\prime}$ denotes the H\"older conjugate exponent to $p_0 (\mu)$. 

If $A$ is complemented with mixed Dirichlet/Neumann boundary conditions we show in 
Theorem~\ref{Thm: Analyticity} that under very general conditions on $\Omega$ the operator 
$- A$ generates a bounded analytic semigroup on $\L^p (\Omega)$ if
\begin{align}
\label{Eq: Analyticity interval}
 \frac{p_0 (\mu) d}{d (p_0 (\mu) - 1) + 2} < p < \frac{p_0 (\mu) d}{d - 2}.
\end{align}
This gives a lower bound on $\eps_0$, namely
\begin{align*}
 \eps_0 \geq \frac{(p_0 (\mu) - 2) (d - 2)}{2 p_0 (\mu) d}.
\end{align*}
The proof of Theorem~\ref{Thm: Analyticity} relies on the verification of weak reverse H\"older estimates for the resolvent operators $(\lambda + A)^{-1}$ which combined with Shen's $\L^p$-extrapolation theorem~\cite{Shen} extrapolates $\L^2$-resolvent estimates to $\L^p$. The proof of these weak reverse H\"older estimates relies on a Moser-type iteration scheme and it was the insight of Cialdea and Maz'ya~\cite{cialdea/mazya} that the $p$-ellipticity condition is just the right condition that allows to test the equation with a testfunction of the form $\lvert u \rvert^{p - 2} u$. Subsequently, a localized version of this testfunction was used by Dindo\v{s} and Pipher~\cite{Dindos_Pipher} to prove the validity of weak reverse H\"older estimates of solutions $u$ that satisfy $- \nabla \cdot \mu \nabla u = 0$ in interior balls. In Theorem~\ref{Thm: Reverse Holder}, we give an adapted argument of how to establish weak reverse H\"older estimates for balls centred at the boundary and also for the resolvent equation. 

Notice that Theorem~\ref{Thm: Analyticity} was independently proven by Egert
in~\cite{Moritz_neu} by a different approach. Another version of Theorem~\ref{Thm: Analyticity}
was proved in~\cite{ELSV} with a probabilistic viewpoint instead of explicit boundary conditions. 

In Theorem~\ref{tnumrange307} we present a perturbation result for real-valued coefficients. 
We show that, given a real-valued elliptic matrix $\mu$, there exists an $\eps > 0$ such that 
for every  $\nu \in \L^{\infty} (\Omega ; \IC^{d \times d})$ 
with $\| \nu \|_{\L^{\infty} (\Omega ; \Lop(\IC^d))} < \eps$ the operator $- A$ associated to the matrix $\mu + \nu$ 
still generates an analytic semigroup on $\L^p (\Omega)$ for all $p \in [1 , \infty)$. 
As a corollary one obtains the 
existence of a `threshold' $p_c > 2$ depending only on the geometry and the ellipticity constants 
of a complex-valued $\mu$ such that, whenever $p_0 (\mu) > p_c$, the operator $- A$
generates an analytic semigroup on $\L^p (\Omega)$ for all $p \in (1 , \infty)$.

In our approach one of the the central
instruments are De Giorgi estimates. In view of our results, it seems not
accidental that in~\cite{mazya} the lack of $\L^\infty$-bounds for the solution
of the elliptic equation is brought in connection with the breakdown of the
classical De Giorgi arguments. 

The bounded analyticity of the semigroup $(\e^{- t A})_{t \geq 0}$ combined with results in~\cite{egert} have twofold consequences. One is that $A$ viewed as an operator on $\L^p (\Omega)$ with $p$ subject to~\eqref{Eq: Analyticity interval} admits the property of maximal $\L^q$-regularity for all $1 < q < \infty$. The second consequence is that the operator $A$ viewed as an operator on the negative scale $\W^{-1 , p}_D (\Omega)$ admits maximal $\L^q$-regularity for all $1 < q < \infty$ and $p \geq 2$ subject to~\eqref{Eq: Analyticity interval}. This is described in Corollary~\ref{Cor: Maximal regularity on Lp} and Section~\ref{Sec: Regularity for the induced operators on the W^{-1,q}_D scale}, respectively. 

Finally, in Theorem~\ref{Thm: Gradient estimates for resolvent}, we give optimal bounds 
for the operator 
$\nabla (\lambda + A)^{-1}$ in $\L^p (\Omega)$ and the operator
$\nabla (\lambda + A)^{-1} \dive$ 
in $\L^p (\Omega, \IC^d)$, where $2 \leq p < 2 + \eps$ for some $\eps > 0$. 
These estimates imply gradient estimates for the semigroup operators. Notice that this is an analogue of the higher integrability statement of Meyers in~\cite{Meyers}. We remark that the proof also works in two dimensions and for elliptic systems. It is already known (in the case $\lambda = 0$) that the bound $p < 2 + \eps$ is sharp, see, e.g., the striking counterexample in the plane in~\cite{Astala_Faraco_Szekelyhidi}. 

The article is organised as follows. 
In Section~\ref{Sec: Notation and preliminary results} we introduce the geometric setup,
discuss the $p$-ellipticity condition and present some preparatory lemmas. 
In Section~\ref{Sec: Formulation of the main results} we formulate our main results and 
in Section~\ref{Sec: Reverse Holder estimates for the resolvent equations} we prove
all weak reverse H\"older estimates that are needed to extrapolate 
estimates from $\L^2 (\Omega)$ to $\L^p (\Omega)$. 
In Section~\ref{Sec: From weak reverse Holder estimates to operator bounds} we prove
Theorems~\ref{Thm: Analyticity} and~\ref{Thm: Gradient estimates for resolvent}, and 
in Section~\ref{Sec: Perturbation of real-valued matrices} we prove the perturbation result 
of real-valued matrices, i.e., Theorem~\ref{tnumrange307}. 
Finally, in Section~\ref{Sec: Regularity for the induced operators on the W^{-1,q}_D scale} we discuss 
the transference of the maximal regularity to the $\W^{-1,q}_D$-scale.

\section{Notation and preliminary results}
\label{Sec: Notation and preliminary results}

Throughout this paper the space dimension $d \geq 3$ is fixed. All Sobolev and Lebesgue spaces are 
considered as Banach spaces over the complex field. 
If $A \subset \IR^d$ is bounded and Lebesgue measurable with its Lebesgue measure 
$\lvert A \rvert > 0$ and if $f \in \L^1_{\loc} (\IR^d)$, then denote by $f_A \coloneqq \lvert A \rvert^{-1} \int_A f \; \d x$ the mean value of $f$ over $A$. The characteristic function of the set $A$ is denoted by $\chi_A$. 

In the following, we consider a bounded and open set $\Omega \subset \IR^d$ along with a closed subset $D \subset \partial \Omega$ of its boundary. The subset $D$ corresponds to the boundary part where Dirichlet boundary conditions are prescribed and will be called the Dirichlet boundary. The complementary part $N \coloneqq \partial \Omega \setminus D$ is called the Neumann boundary. We denote by
\begin{align*}
 \C_D^{\infty} (\Omega) \coloneqq \{ \varphi|_{\Omega} : \varphi \in \C_c^{\infty} (\IR^d) \text{ and } \supp(\varphi) \cap D = \emptyset \}
\end{align*}
the space of all smooth functions that vanish on $D$. 
For all $1 \leq p < \infty$ we denote by $p^{\prime}$ the conjugate exponent to $p$ and we denote by
\begin{align*}
 \W^{1 , p}_D (\Omega) \coloneqq \overline{\C_D^{\infty} (\Omega)}^{\W^{1 , p} (\Omega)}
\end{align*}
the first-order Sobolev space of functions that vanish on $D$. 
By the very definition, it is clear that $\W^{1 , p}_D (\Omega)$ is invariant under 
multiplication by smooth and compactly supported functions. 
For the record we state the following two lemmas.

\begin{lemma}
\label{Lem: Multiplication by smooth functions}
Let $1 \leq p < \infty$ and $u \in \W^{1 , p}_D (\Omega)$. If $\eta \in \C_c^{\infty} (\IR^d)$, then $\eta u \in \W^{1 , p}_D (\Omega)$.
\end{lemma}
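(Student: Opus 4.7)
The plan is to use the very definition of $\W^{1,p}_D(\Omega)$ as the $\W^{1,p}$-closure of $\C_D^\infty(\Omega)$, together with the elementary Leibniz rule, and argue by approximation.

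First, I would pick a sequence $(u_n) \subset \C_D^\infty(\Omega)$ with $u_n \to u$ in $\W^{1,p}(\Omega)$. By definition, for each $n$ there is $\varphi_n \in \C_c^\infty(\IR^d)$ with $\supp(\varphi_n) \cap D = \emptyset$ and $u_n = \varphi_n|_\Omega$. Then $\eta \varphi_n \in \C_c^\infty(\IR^d)$ and $\supp(\eta \varphi_n) \subset \supp(\varphi_n)$, so $\supp(\eta \varphi_n) \cap D = \emptyset$. Thus $\eta u_n = (\eta \varphi_n)|_\Omega \in \C_D^\infty(\Omega)$ for every $n$.

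Next, I would invoke the classical Leibniz rule on $\W^{1,p}(\Omega)$, which gives $\eta v \in \W^{1,p}(\Omega)$ with $\nabla(\eta v) = (\nabla \eta)\, v + \eta\, \nabla v$ for any $v \in \W^{1,p}(\Omega)$ and $\eta \in \C_c^\infty(\IR^d)$. Applying this to $v = u - u_n$ and using the pointwise bound by $\|\eta\|_\infty + \|\nabla \eta\|_\infty$, one obtains the estimate
\begin{equation*}
\|\eta u_n - \eta u\|_{\W^{1,p}(\Omega)} \leq \bigl(\|\eta\|_\infty + \|\nabla \eta\|_\infty\bigr)\, \|u_n - u\|_{\W^{1,p}(\Omega)},
\end{equation*}
so $\eta u_n \to \eta u$ in $\W^{1,p}(\Omega)$. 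Since $\eta u_n \in \C_D^\infty(\Omega)$ for each $n$, the limit $\eta u$ lies in the closure $\W^{1,p}_D(\Omega)$, which is the claim.

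There is no genuine obstacle here; the only point one must be careful about is that taking pointwise product with $\eta$ preserves the support condition defining $\C_D^\infty(\Omega)$, which is immediate from $\supp(\eta \varphi_n) \subset \supp(\varphi_n)$. Everything else is the continuity of the multiplication map $v \mapsto \eta v$ on $\W^{1,p}(\Omega)$, which is a one-line consequence of the Leibniz rule.
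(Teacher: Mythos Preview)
Your proof is correct and is exactly the argument the paper has in mind: the authors state this lemma without proof, noting just before it that ``by the very definition, it is clear that $\W^{1,p}_D(\Omega)$ is invariant under multiplication by smooth and compactly supported functions.'' Your approximation argument via $\C_D^\infty(\Omega)$ and the Leibniz rule is precisely the natural way to unpack that remark.
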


\begin{lemma}
\label{Lem: Modulus lemma}
Let $1 < p < \infty$ and $u \in \W^{1 , p}_D (\Omega)$.
Then $\lvert u \rvert \in \W^{1 , p}_D (\Omega)$.
\end{lemma}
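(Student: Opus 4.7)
The plan is to regularise the modulus by a smooth surrogate and pass to the limit inside the closed subspace $\W^{1,p}_D(\Omega)$. For $\varepsilon > 0$, introduce
\[
 F_\varepsilon(z) \coloneqq \sqrt{\lvert z \rvert^2 + \varepsilon^2} - \varepsilon \qquad (z \in \IC),
\]
which is $C^\infty$ as a function of $(\Re z, \Im z) \in \IR^2$, satisfies $F_\varepsilon(0) = 0$, is globally $1$-Lipschitz, and converges pointwise to $\lvert \cdot \rvert$ as $\varepsilon \to 0^+$. A direct computation gives
\[
 \nabla F_\varepsilon(u) = \frac{\Re(\overline{u}\, \nabla u)}{\sqrt{\lvert u \rvert^2 + \varepsilon^2}},
\]
with the pointwise bound $\lvert \nabla F_\varepsilon(u) \rvert \leq \lvert \nabla u \rvert$.

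The first step is to show that $F_\varepsilon$ maps $\W^{1,p}_D(\Omega)$ into itself. Pick $u \in \W^{1,p}_D(\Omega)$ and $u_n \in \C_D^{\infty}(\Omega)$ with $u_n \to u$ in $\W^{1,p}(\Omega)$, each $u_n$ being the restriction of some $\varphi_n \in \C_c^{\infty}(\IR^d)$ whose support avoids $D$. Since $F_\varepsilon$ is smooth on $\IR^2$ and vanishes at $0$, the composition $F_\varepsilon \circ \varphi_n$ lies in $\C_c^{\infty}(\IR^d)$ with support contained in that of $\varphi_n$, so its restriction belongs to $\C_D^{\infty}(\Omega)$. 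The chain rule for Sobolev functions together with the gradient formula above and a Vitali-type argument (using a.e.\ convergence along a subsequence and the uniform bound $\lvert \nabla F_\varepsilon(u_n) \rvert \leq \lvert \nabla u_n \rvert$) then yields $F_\varepsilon(u_n) \to F_\varepsilon(u)$ in $\W^{1,p}(\Omega)$, so $F_\varepsilon(u) \in \W^{1,p}_D(\Omega)$.

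The second step is to send $\varepsilon \to 0^+$. Pointwise one has $F_\varepsilon(u) \to \lvert u \rvert$ and $\nabla F_\varepsilon(u) \to \Re(\overline{\sgn u}\, \nabla u)$, where $\sgn u \coloneqq u / \lvert u \rvert$ on $\{u \neq 0\}$ and $0$ on $\{u = 0\}$; this limit is the classical weak gradient of $\lvert u \rvert$, using that $\nabla u = 0$ a.e.\ on $\{u = 0\}$. Since $0 \leq F_\varepsilon(u) \leq \lvert u \rvert$ and $\lvert \nabla F_\varepsilon(u) \rvert \leq \lvert \nabla u \rvert$ uniformly in $\varepsilon$, dominated convergence yields $F_\varepsilon(u) \to \lvert u \rvert$ in $\W^{1,p}(\Omega)$.

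Combining the two steps, $\lvert u \rvert$ is the $\W^{1,p}(\Omega)$-limit of $(F_{1/k}(u))_{k \in \IN} \subset \W^{1,p}_D(\Omega)$ and therefore lies in this closed subspace. I expect the main technical point to be the continuity of the Nemitski-type map induced by $F_\varepsilon$ on $\W^{1,p}(\Omega)$ in the first step: one must dominate $\lvert \nabla F_\varepsilon(u_n) - \nabla F_\varepsilon(u) \rvert^p$ by an equi-integrable function, carefully treating the nonlinear factor $\overline{u}/\sqrt{\lvert u \rvert^2 + \varepsilon^2}$ and the behaviour where $u$ or $u_n$ vanishes; this is routine but is the one place a direct computation does not suffice.
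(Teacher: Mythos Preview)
Your proof is correct and uses the same smooth surrogate $F_\varepsilon(z) = \sqrt{|z|^2 + \varepsilon^2} - \varepsilon$ as the paper. The execution, however, differs. The paper forms a single diagonal sequence $w_n = F_{\varepsilon_n}(\varphi_n)|_\Omega$ with $\varepsilon_n \to 0$ chosen so that $w_n \to |u|$ in $\L^p(\Omega)$, observes that $(w_n)$ is bounded in $\W^{1,p}_D(\Omega)$ via $|\nabla w_n| \leq |\nabla \varphi_n|$, and then invokes weak sequential compactness of $\W^{1,p}_D(\Omega)$ (reflexivity, $1 < p < \infty$) to conclude that the weak limit $|u|$ lies in this closed subspace. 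Your two-step argument instead establishes \emph{strong} $\W^{1,p}$-convergence at each stage via dominated/Vitali convergence; this is slightly more laborious but yields norm convergence of $F_{1/k}(u)$ to $|u|$ and does not rely on reflexivity. The paper's route bypasses precisely the technical point you flag---continuity of the Nemytskii map $u \mapsto F_\varepsilon(u)$ on $\W^{1,p}$---by trading strong convergence for a soft weak-compactness argument.
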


\begin{proof}
Let $n \in \IN$.
There exists a $\varphi_n \in \C_c^\infty(\IR^d)$ such that 
$D \cap \supp \varphi_n = \emptyset$ and, moreover,
$\|\varphi_n|_\Omega - u\|_{\W^{1,p}(\Omega)} \leq \frac{1}{n}$.
Let $\varepsilon_n > 0$ and define $\psi_n = \sqrt{|\varphi_n|^2 + \varepsilon_n^2} - \varepsilon_n$.
Clearly $\psi_n \in \C^\infty(\IR^d)$ and $\supp \psi_n = \supp \varphi_n$.
So $\psi_n \in \C_c^\infty(\IR^d)$.
Write $w_n = \psi_n|_\Omega$.
Then $w_n \in \W^{1,p}_D(\Omega)$.
Choose $\varepsilon_n > 0$ such that $\|\psi_n - |\varphi_n| \|_{\L^p(\IR^d)} \leq \frac{1}{n}$.
Now $\nabla \psi_n = \frac{\Re(\overline{\varphi_n} \nabla \varphi_n)}{\sqrt{|\varphi_n|^2 + \varepsilon_n^2}}$,
so $|\nabla \psi_n| \leq |\nabla \varphi_n|$ and 
$\|w_n\|_{\W^{1,p}_D(\Omega)} \leq \|u\|_{\W^{1,p}(\Omega)} + \frac{2}{n}$.
Therefore the sequence $(w_n)_{n \in \IN}$ is bounded in $\W^{1,p}_D(\Omega)$ and it 
has a weakly convergent subsequence in $\W^{1,p}_D(\Omega)$.
The weak limit is an element of $\W^{1,p}_D(\Omega)$.
But $\lim_{n \to \infty} w_n = |u|$ in $\L^p(\Omega)$. 
Hence the weak limit is equal to $|u|$.
Consequently $|u| \in \W^{1,p}_D(\Omega)$.
\end{proof}

In the following, we define as usual $\W^{-1 , p}_D (\Omega)$ to be the antidual space of $\W^{1 , p^{\prime}}_D (\Omega)$ whenever $1 < p < \infty$.

\subsection{The geometric setup and related inequalities}

The set $\Omega$ under consideration is supposed to satisfy the following condition at the 
closure of the Neumann boundary.

\begin{assumptionN}
There exists a constant $M \geq 1$ such that for every $x \in \overline{N}$ there exist 
an open neighbourhood $U_x \subset \IR^d$ of $x$ and a 
bi-Lipschitz homeomorphism $\Phi_x$ from an open neighbourhood of $\overline{U_x}$ onto an open subset of $\IR^d$ such that $\Phi_x (x) = 0$,
\begin{align*}
 \Phi_x (U_x) &= (-1 , 1)^d, \\
 \Phi_x (U_x \cap \Omega) &= (-1 , 1)^{d - 1} \times (0 , 1), \\
 \Phi_x (U_x \cap \partial \Omega) &= (-1 , 1)^{d - 1} \times \{ 0 \},
\end{align*}
and such that the Lipschitz constants of $\Phi_x$ and $\Phi_x^{-1}$ are both less than $M$.
\end{assumptionN}

We emphasise that the constant~$M$ is independent of the point~$x$.

In the following, we will frequently intersect $\Omega$ with a ball $B(x , r)$ so that we introduce the short-hand notation
\begin{align*}
 \Omega (x , r) \coloneqq \Omega \cap B(x , r).
\end{align*}

\begin{rem}
\label{Rem: Reflection at Lipschitz boundary}
Let $x \in \overline{N}$. 
Then Assumption~N allows to construct a local extension operator $E$ in a neighbourhood 
of $x$ by reflection at the Lipschitz boundary, 
see~\cite[Prop.~2.3 and Rem.~2.2]{Tolksdorf}. 
More precisely, given $0 < r \leq 1 / 4$ there exists a linear operator $E$ that 
maps measurable functions on $\Omega$ to measurable functions on 
$\Omega \cup B(x , r / (M \sqrt{d}))$ which satisfies for all $1 \leq p < \infty$ the estimates
\begin{align*}
\begin{aligned}
 \| E f \|_{\L^p (B(x , r / (M \sqrt{d}))} &\leq C \| f \|_{\L^p (\O (x , M r))} &&(f \in \L^p (\Omega)) \\
 \| \nabla E f \|_{\L^p (B(x , r / (M \sqrt{d}) ; \IC^d)} &\leq C \| \nabla f \|_{\L^p (\O (x , M r) ; \IC^d)} &&(f \in \W^{1 , p}_D (\Omega)).
\end{aligned}
\end{align*}
Here the constant $C > 0$ depends only on $d$, $p$ and $M$.
\end{rem}

In some situations it is desirable to have Sobolev's embedding theorem for a function 
$u \in \W^{1 , p}_D (\Omega)$ on sets of the 
form $\Omega (x_0 , r)$ available, where
$x_0 \in \partial \Omega$ and $r > 0$ is small enough. 
The set $\Omega (x_0 , r)$, however, 
might be very irregular as it might contain cuspidal boundary points at boundary parts
where $u$ does not vanish. A way out is guaranteed if one allows the domain of integration in the integral on the right-hand side of Sobolev's inequality to be slightly enlarged. Indeed, this allows to introduce a suitable superset of $\Omega (x_0 , r)$ which is regular enough to employ Sobolev's embedding theorem there. A quantitative version of this argument is presented in the following lemma whose proof can be found in~\cite[Lem.~5.4]{Tolksdorf}.

\begin{lemma}
\label{Lem: Local Sobolev embedding}
Let $\Omega \subset \IR^d$ be open and bounded and $D \subset \partial \Omega$ be subject to Assumption~N. Let $p \in [1 , \infty)$ and $u \in \W^{1 , p}_D (\Omega)$. Let $x_0 \in \overline{\Omega}$ and $0 < r \leq 1 / (4 M \sqrt{d})$ be such that either $B(x_0 , r) \subset \Omega$ or $x_0 \in \partial \Omega$. If $q \in [1 , \infty)$ is such that $0 \leq 1 / p - 1 / q \leq 1 / d$, then there exists a 
constant $C_{\rm Sob} > 0$, depending only on $p$, $q$, $d$ and $M$, such that
\begin{align*}
 \bigg( \frac{1}{r^d} \int_{\Omega (x_0 , r)} \lvert u \rvert^q \; \d x \bigg)^{1 / q} 
\leq C_{\rm Sob} \bigg\{ r \bigg( \frac{1}{r^d} \int_{\Omega (x_0 , \alpha r)} \lvert \nabla u \rvert^p \; \d x \bigg)^{1 / p} + \bigg( \frac{1}{r^d} \int_{\Omega (x_0 , r)} \lvert u \rvert^p \; \d x \bigg)^{1 / p} \bigg\} ,
\end{align*}
where $\alpha \coloneqq M^2 \sqrt{d}$.
\end{lemma}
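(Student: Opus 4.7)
The plan is to reduce the inequality to the classical Sobolev--Poincar\'e estimate on a Euclidean ball, handling the possible irregularity of $\Omega(x_0,r)$ at $\partial\Omega$ by means of the reflection extension in Remark~\ref{Rem: Reflection at Lipschitz boundary}, and then to re-centre the resulting estimate at $u_{\Omega(x_0,r)}$ in order to produce the zeroth-order term on the un-enlarged set $\Omega(x_0,r)$. I would distinguish the two alternatives in the hypothesis.

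In the interior case $B(x_0,r) \subset \Omega$, one has $\Omega(x_0,r) = B(x_0,r)$ and the claim is the standard Sobolev--Poincar\'e inequality on a ball: under the assumption $0 \leq 1/p - 1/q \leq 1/d$ one has $\| u - u_{B(x_0,r)} \|_{\L^q(B(x_0,r))} \leq Cr \| \nabla u \|_{\L^p(B(x_0,r))}$, Jensen controls $\lvert u_{B(x_0,r)} \rvert$ by $\bigl( |B(x_0,r)|^{-1} \int_{B(x_0,r)} |u|^p \, \d x \bigr)^{1/p}$, and normalisation by $r^{-d}$ finishes this case with $\alpha = 1$.

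In the boundary case $x_0 \in \partial\Omega$, I would apply Remark~\ref{Rem: Reflection at Lipschitz boundary} with $\tilde r \coloneqq r M \sqrt d$; the standing assumption $r \leq 1/(4 M \sqrt d)$ ensures $\tilde r \leq 1/4$, so the remark produces a linear extension $Eu$ on $\Omega \cup B(x_0,r)$ with $Eu = u$ on $\Omega$ and
\[
 \| \nabla E u \|_{\L^p(B(x_0,r))} \leq C \| \nabla u \|_{\L^p(\Omega(x_0,\alpha r))}, \quad \| E u \|_{\L^p(B(x_0,r))} \leq C \| u \|_{\L^p(\Omega(x_0,\alpha r))},
\]
with $\alpha = M^2 \sqrt d$ and $C$ depending only on $d$, $p$ and $M$. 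Applying the interior case to $Eu$ on $B(x_0,r)$ and using $Eu = u$ on $\Omega(x_0,r)$ already delivers the inequality with $\Omega(x_0,\alpha r)$ in \emph{both} right-hand-side terms. To bring the zeroth-order term back to $\Omega(x_0,r)$, I would instead apply the Sobolev--Poincar\'e inequality on $B(x_0,r)$ centred at $u_{\Omega(x_0,r)}$ rather than at $(Eu)_{B(x_0,r)}$. The bi-Lipschitz flattening $\Phi_{x_0}$ from Assumption~N gives a uniform volume bound $c_0 r^d \leq \lvert \Omega(x_0,r) \rvert \leq \lvert B(x_0,r) \rvert$, and Jensen's inequality then shows that $\lvert (Eu)_{B(x_0,r)} - u_{\Omega(x_0,r)} \rvert$ is controlled by a constant multiple of $r$ times the normalised gradient of $Eu$ on $B(x_0,r)$. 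A triangle inequality
\[
 \| u \|_{\L^q(\Omega(x_0,r))} \leq \| Eu - u_{\Omega(x_0,r)} \|_{\L^q(B(x_0,r))} + \lvert \Omega(x_0,r) \rvert^{1/q} \lvert u_{\Omega(x_0,r)} \rvert,
\]
combined with Jensen $\lvert u_{\Omega(x_0,r)} \rvert \leq \bigl( |\Omega(x_0,r)|^{-1} \int_{\Omega(x_0,r)} |u|^p \, \d x \bigr)^{1/p}$ and the volume comparison above, then produces the desired $\Omega(x_0,r)$-term on the right, with a constant depending only on $d$, $p$, $q$ and $M$.

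The main technical obstacle is the \emph{uniformity} of all these constants, in particular of the volume lower bound $|\Omega(x_0,r)| \geq c_0 r^d$, over all admissible $x_0 \in \overline\Omega$ and all $r \in (0, 1/(4M\sqrt d)]$. This rests on the fact that Assumption~N furnishes a \emph{common} Lipschitz constant $M$ for all the flattening maps $\Phi_{x_0}$, which is precisely what guarantees that $\Omega(x_0,r)$ occupies a non-trivial fraction of $B(x_0,r)$ uniformly in the boundary point. Once this uniformity is in place, the rest of the argument reduces to standard Sobolev--Poincar\'e estimates on balls combined with Remark~\ref{Rem: Reflection at Lipschitz boundary}; the full details are carried out in~\cite[Lem.~5.4]{Tolksdorf}.
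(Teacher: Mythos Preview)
The paper does not prove this lemma independently; it simply refers to \cite[Lem.~5.4]{Tolksdorf}, and your proposal ultimately does the same while supplying a correct outline of the reflection-then-Sobolev--Poincar\'e argument in the two stated cases. One small omission in your case analysis: when $x_0 \in \partial\Omega$ lies in $D \setminus \overline N$, Assumption~N furnishes no bi-Lipschitz chart at $x_0$, so neither Remark~\ref{Rem: Reflection at Lipschitz boundary} nor the uniform volume bound $|\Omega(x_0,r)| \geq c_0 r^d$ is directly available there; in that situation, however, $u \in \W^{1,p}_D(\Omega)$ vanishes along $\partial\Omega \cap B(x_0,r)$ and can simply be extended by zero to $B(x_0,r)$, after which the interior argument applies with no enlargement at all.
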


\begin{rem}
We will use Sobolev's embedding only in the cases $p = 2$ and $q = 2 d / (d - 2)$, or $p = 2 d / (d + 2)$ and $q = 2$. In these cases $C_{\text{Sob}}$ depends only on $d$ and $M$.
\end{rem}

To obtain higher integrability properties of the gradient of the solution to elliptic equations a further regularity property of $\Omega$ will be required in the proof. This property is the so-called corkscrew or plumpness condition of $\Omega^c$.

\begin{assumptionO}
\label{Ass: Plumpness}
There exist $r_0 > 0$ and $\kappa \in (0 , 1)$ such that for all $x \in \partial \Omega$ and $0 < r < r_0$ there exists an $x^* \in \Omega^c$ such that $B(x^* , \kappa r) \subset \Omega^c \cap B(x , r)$.
\end{assumptionO}

\begin{rem}
Notice that if $\Omega$ satisfies Assumption~N, then the plumpness condition is automatically satisfied in a neighbourhood of $\overline{N}$ due to the existence of the bi-Lipschitz coordinate charts. 
Thus, in this case, Assumption~$\Omega^c$ only introduces a condition to the behaviour of $\Omega$ at the Dirichlet boundary $D$.
\end{rem}

Another condition that is needed for the higher integrability property of the gradient is a plumpness condition for the Dirichlet boundary. This prevents the interface that separates $D$ and $N$ to have cusps that reach into the Neumann part.

\begin{assumptionD}
There exist $s_0 > 0$ and $\iota \in (0 , 1)$ such that for all $x \in D \cap \overline{N}$ and $0 < r < s_0$ there exists an $x^* \in D \cap B(x , r)$ such that $B(x^* , \iota r) \cap N = \emptyset$.
\end{assumptionD}

Under Assumptions~N,~$\Omega^c$ and~D one can prove the following Poincar\'e-type inequality close to the Dirichlet boundary.

\begin{lemma}
\label{Lem: Local Poincare}
Let $\Omega \subset \IR^d$ be a bounded, open set and $D \subset \partial \Omega$ be closed and 
subject to Assumptions~N,~$\Omega^c$ and~D. 
Let $p \in [1,\infty)$. 
Then there exists a constant $C > 0$ such that 
\begin{align*}
 \| u \|_{\L^p (\Omega (x , r))} \leq C r \| \nabla u \|_{\L^p(\Omega (x , \beta r) ; \IC^d)},
\end{align*}
for all $u \in \W^{1 , p}_D (\Omega)$, $x \in \partial \Omega$ and 
$0 < r < \min\{ s_0 / 2 , r_0 / (2 \iota) , r_0 , 1 / (8M \sqrt{d}) \}$ 
with $B(x , r) \cap D \neq \emptyset$,
where $\beta \coloneqq 4 M^2 \sqrt{d}$.
Here $C > 0$  depends only on $d$, $M$, $p$, $\kappa$ and $\iota$.
\end{lemma}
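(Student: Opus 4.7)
\emph{Strategy.} My plan is to prove the inequality by extending $u$ by zero outside $\O$ on a small ball sitting in $\O^{c}$ close to the Dirichlet part $D$ but at positive distance from $N$, and then to invoke a classical Poincar\'e inequality on a ball containing both $\O(x,r)$ and this escape ball. The Dirichlet condition $u \in \W^{1,p}_{D}(\O)$ is used precisely to justify the zero-extension across $D$.

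\emph{Geometric construction of the escape ball.} Starting from the hypothesis $B(x,r) \cap D \neq \emptyset$, I pick some $y \in B(x,r) \cap D$. A short case analysis---depending on whether $y$ itself lies in $\overline{N}$, or instead a nearby point of $D \cap \overline{N}$ has to be used---together with Assumption~D applied at radius of order $r$ produces a point $y^{*} \in D$ within distance $O(r)$ of $x$ with $B(y^{*}, \iota r) \cap N = \emptyset$. Applying Assumption~$\O^{c}$ at $y^{*}$ with radius $\iota r$ then yields $z^{*} \in \O^{c}$ such that
\[
B^{*} \coloneqq B(z^{*}, \kappa \iota r) \subset \O^{c} \cap B(y^{*}, \iota r).
\]
By construction $\overline{B^{*}} \cap N = \emptyset$, $\abs{B^{*}} \geq c_{1} r^{d}$, and $B^{*} \subset B(x, \beta r / 2)$ with $\beta \coloneqq 4 M^{2} \sqrt{d}$, where $c_{1} > 0$ depends only on $d$, $\iota$ and $\kappa$. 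The restrictions $r < s_{0}/2$, $r < r_{0}/(2\iota)$ and $r < r_{0}$ are exactly what is needed in order to apply Assumptions~D and~$\O^{c}$ at the relevant radii.

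\emph{Extension by zero and Poincar\'e.} I next choose a ball $B_{1}$ of radius of order $r$ with $\O(x,r) \cup B^{*} \subset B_{1} \subset B(x, \beta r)$ and $B_{1} \cap \partial \O \subset D$. When $x$ is close to $\overline{N}$ the last condition cannot be arranged for any ball around $x$; in that case I first apply the Lipschitz reflection of \remref{Rem: Reflection at Lipschitz boundary} at a point of $\overline{N}$ near $x$ to extend $u$ across $N$ in a genuine ball around $x$, reducing matters to the case where only $D$ remains as the relevant local boundary. Since every $\varphi \in \C_{D}^{\infty}(\O)$ vanishes in a neighbourhood of $D \supset B_{1} \cap \partial \O$, its zero-extension across $B_{1} \setminus \O$ is smooth on $B_{1}$. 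By density of $\C_{D}^{\infty}(\O)$ in $\W^{1,p}_{D}(\O)$ and a Cauchy argument, the zero-extension $\widetilde{u}$ of $u$ belongs to $\W^{1,p}(B_{1})$, equals $u$ on $B_{1} \cap \O$, vanishes on $B_{1} \setminus \overline{\O}$, and satisfies $\nabla \widetilde{u} = \chi_{\O} \, \nabla u$ almost everywhere on $B_{1}$. Since $\widetilde{u} \equiv 0$ on $B^{*} \subset B_{1}$ with $\abs{B^{*}}/\abs{B_{1}} \geq c_{2} > 0$, the classical Poincar\'e inequality for Sobolev functions with a zero set of positive density gives
\[
\bignorm{\widetilde{u}}_{\L^{p}(B_{1})} \leq C r \bignorm{\nabla \widetilde{u}}_{\L^{p}(B_{1};\IC^{d})}.
\]
The left side bounds $\norm{u}_{\L^{p}(\O(x,r))}$ and the right side equals $C r \norm{\nabla u}_{\L^{p}(B_{1} \cap \O;\IC^{d})} \leq C r \norm{\nabla u}_{\L^{p}(\O(x,\beta r);\IC^{d})}$, which yields the claim with a constant depending only on $d$, $M$, $p$, $\kappa$ and $\iota$.

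\emph{Main obstacle.} The delicate point is the geometric arrangement of the ball $B_{1}$: it must simultaneously contain $\O(x,r)$ and the escape ball $B^{*}$, lie inside $B(x,\beta r)$, and meet $\partial \O$ only in $D$. The case analysis forced by Assumption~D---which gives plumpness of $D$ away from $N$ only at points of $D \cap \overline{N}$---combined with the detour through the reflection of \remref{Rem: Reflection at Lipschitz boundary} whenever $x$ is adherent to $N$ is what compels the specific value $\beta = 4 M^{2} \sqrt{d}$ and the threshold conditions on $r$ in the statement.
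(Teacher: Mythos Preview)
Your approach is essentially the paper's: zero extension across $D$ combined with Assumption~$\Omega^c$ to locate an escape region in $\Omega^c$, then a Poincar\'e inequality on a ball (which the paper makes explicit via the Riesz-potential bounds of \cite[Lem.~7.12 and~7.16]{Gilbarg_Trudinger} rather than citing an abstract vanishing-set Poincar\'e), together with the Lipschitz reflection of Remark~\ref{Rem: Reflection at Lipschitz boundary} in the case $B(x,r)\cap N\neq\emptyset$. The one organizational difference lies in that mixed case: the paper does not try to manufacture a single ball $B_1$ with $B_1\cap\partial\Omega\subset D$ after reflection, but instead picks $z\in\overline N\cap D\cap B(x,r)$, reflects at $z$ to obtain $Eu$ on $B(z,2r)$, and splits
\[
\|u\|_{\L^p(\Omega(x,r))}\le\|Eu-(Eu)_{\Omega(z^*,2\iota r)}\|_{\L^p(B(z,2r))}+C\,\|u\|_{\L^p(\Omega(z^*,2\iota r))},
\]
bounding the second term by the pure-Dirichlet case at $z^*$ (where $B(z^*,2\iota r)\cap N=\emptyset$) and the first by Poincar\'e on the full reflected ball---this sidesteps the awkwardness in your sketch that the reflected extension $Eu$ need not vanish on the artificial boundary of the reflected patch, so a direct zero-extension of $Eu$ out to a ball containing $B^*$ is not obviously justified.
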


\begin{proof}
First of all, let $x \in \partial \Omega$ and $0 < r < r_0$ be such that $B(x , r) \cap N = \emptyset$. 
Let $u \in \W^{1 , p}_D (\Omega)$.
Let $\widetilde u$ be the extension by zero to $B(x , r)$ of 
$u|_{\Omega (x , r)}$.
Then $\widetilde u \in \W^{1 , p}_D (\Omega)$ by~\cite[Lem.~2.2(b)]{ERe2}.
Assumption~$\Omega^c$ implies the existence of a point $x^* \in \Omega^c$ such that $B(x^* , \kappa r) \subset \Omega^c \cap B(x , r)$. 
By \cite[Lem.~7.16]{Gilbarg_Trudinger} one deduces that 
\begin{align*}
 \lvert \widetilde{u} (y) - \widetilde{u}_{B(x^* , \kappa r)} \rvert \leq \frac{2^d}{d \lvert B(0 , 1) \rvert \kappa^d} \int_{B(x , r)} \lvert y - z \rvert^{-(d - 1)} \lvert \nabla \widetilde{u} (z) \rvert \; \d z
\end{align*}
for almost every $y \in B(x , r)$.
Notice that $\widetilde{u}$ is zero on $B(x^* , \kappa r)$ so that the mean value integral on the left-hand side is zero. Taking $\L^p$-norms with respect to the variable $y \in B(x , r)$  together with the 
boundedness of the Riesz potential \cite[Lem.~7.12]{Gilbarg_Trudinger} yields
\begin{align}
\label{Eq: First case inequality}
 \| u \|_{\L^p (\O(x , r))} = \| \widetilde{u} \|_{\L^p (B(x , r))} \leq \frac{2^d}{\kappa^d} r \| \nabla \widetilde{u} \|_{\L^p(B(x , r) ; \IC^d)} = \frac{2^d}{\kappa^d} r \| \nabla u \|_{\L^p(\O (x , r) ; \IC^d)}
\end{align}
as required. 

For all $x \in \partial \Omega$ and $0 < r < \min\{ s_0 / 2 , r_0 / (2 \iota) , r_0 , 1 / (8M) \}$ 
that satisfy $B(x , r) \cap N \neq \emptyset$ and $B(x , r) \cap D \neq \emptyset$ the
 situation is reduced to the previous case as follows. 
First of all, notice that there exists a $z \in \overline{N} \cap D \cap B(x , r)$ and that $B(x , r) \subset B(z , 2 r)$. Employ Assumption~D to obtain a point $z^* \in D \cap B(z , 2 r)$ such that $B(z^* , 2 \iota r) \cap N = \emptyset$. Moreover, let $E$ denote the local extension operator described in Remark~\ref{Rem: Reflection at Lipschitz boundary}. By employing the fact $E u = u$ on $\O (x , r)$ and the triangle and H\"older's inequality together with $B(x , r) \subset B(z , 2 r)$ one estimates
\begin{align}
\label{Eq: Triangle inequality for Poincare}
 \| u \|_{\L^p(\O(x , r))} \leq \| E u - (E u)_{\O (z^* , 2 \iota r)} \|_{\L^p(B(z , 2 r))} + \frac{\lvert B(x , r) \rvert^{1 / p}}{\lvert \O (z^* , 2 \iota r) \rvert^{1 / p}} \| u \|_{\L^p(\O (z^* , 2 \iota r))}.
\end{align}
Note that Assumption~N implies that
\begin{align}
\label{Eq: d-set near Neumann}
 \frac{\lvert B(x , r) \rvert^{1 / p}}{\lvert \O (z^* , 2 \iota r) \rvert^{1 / p}} \leq C
\end{align}
with a constant $C > 0$ that depends only on $\iota$, $d$, $M$ and $p$.
Then the second term on the right-hand side of~\eqref{Eq: Triangle inequality for Poincare} is estimated by~\eqref{Eq: First case inequality}.
It remains to control the term
\begin{align*}
\| E u - (E u)_{\O (z^* , 2 \iota r)} \|_{\L^p(B(z , 2 r))}.
\end{align*}
This is done as in the first part of the proof by virtue of~\cite[Lem.~7.12/7.16]{Gilbarg_Trudinger} and gives together with~\eqref{Eq: d-set near Neumann} a constant $C > 0$ that depends only on $d$, $p$, $M$ and $\iota$ such that
\begin{align*}
 \| E u - (E u)_{\O (z^* , 2 \iota r)} \|_{\L^p(B(z , 2 r))} \leq C r \| \nabla E u \|_{\L^p(B (z , 2 r) ; \IC^d)}.
\end{align*}
Employing Remark~\ref{Rem: Reflection at Lipschitz boundary} and $B(z , 2 M^2 \sqrt{d} r) \subset B(x , 4 M^2 \sqrt{d} r)$ concludes the proof.
\end{proof}

For later purposes, we introduce some more geometric concepts. To do so, denote by $\mathcal{H}^{d - 1}$ the $(d - 1)$-dimensional Hausdorff measure defined on $\IR^d$.

\begin{definition}
\begin{enumerate}
 \item An open or closed set $\Xi \subset \IR^d$ is called a $d$-set if there exists a 
$c > 0$ such that $\lvert \Xi \cap B(x , r) \rvert \geq c r^d$ for all $x \in \Xi$ and $0 < r < 1$. 
 \item A closed set $E \subset \IR^d$ is called a $(d - 1)$-set 
if there exist $C , c > 0$ such that 
$c r^{d - 1} \leq \mathcal{H}^{d - 1} (E \cap B(x , r)) \leq C r^{d - 1}$ for all $x \in E$ and $0 < r < 1$.
\end{enumerate}
\end{definition}

Moreover, we formulate another assumption. To do so, denote by $B^{\prime} (x^{\prime} , r)$ the open ball in $\IR^{d - 1}$ with radius $r > 0$ and centre $x^{\prime}$.

\begin{assumptionP}
Let $\Omega \subset \IR^d$ be open, $D \subset \partial \Omega$ be closed, and set $N \coloneqq \partial \Omega \setminus D$. If $\Omega$ is subject to Assumption~N and $x_0 \in \overline{N} \cap D$ let $\Phi_{x_0}$ denote the corresponding bi-Lipschitz homeomorphism with corresponding set $U_{x_0}$. Then there are $c_0 \in (0,1)$ and $c_1 > 0$ such that
\[
\mathcal{H}_{d-1} \{y^{\prime} \in B^{\prime}(x^{\prime} , r) :
\dist((y^{\prime},0),\Phi_{x_0}(U_{x_0} \cap N)) > c_0 r \} 
\geq c_1 r^{d-1}
\]
for all $r \in (0,1]$ and 
$x^{\prime} \in \IR^{d-1}$ with 
$(x^{\prime} , 0) \in  \Phi_{x_0}(U_{x_0} \cap D \cap \overline N)$.
\end{assumptionP}

\begin{rem}
Notice that Assumption~D implies Assumption~P.
\end{rem}

\begin{rem}
All stated geometric conditions are fulfilled in case that $\Omega$ is a bounded Lipschitz 
domain and in case that the interface separating $D$ and $N$ is Lipschitz as well. 
In particular the cases $D = \emptyset$ and $D = \partial \Omega$ are included, which will 
give Neumann and Dirichlet boundary conditions for the elliptic operator below.
\end{rem}

\subsection{The elliptic operator}
\label{Sec: The elliptic operator}

The operator under consideration is an elliptic operator $- \nabla \cdot \mu \nabla$ in divergence form. The matrix of coefficients is assumed to satisfy the following standard conditions.

\begin{assu}
\label{Ass: Ellipticity}
Let $\mu \in \L^{\infty} (\Omega ; \IC^{d \times d})$ be such that there exist numbers $c_{\bullet} , c^{\bullet} > 0$ with
\begin{align*}
 \| \mu \|_{\L^{\infty} (\Omega ; \Lop(\IC^d))} \leq c^{\bullet} \qquad \text{and} \qquad \Re \langle \mu(x) \xi , \xi \rangle \geq c_{\bullet} \lvert \xi \rvert^2 \quad (\xi \in \IC^d, \text{ a.e.\@ } x \in \Omega).
\end{align*}
\end{assu}

Let $\mu \in \L^{\infty} (\Omega ; \IC^{d \times d})$ be 
subject to Assumption~\ref{Ass: Ellipticity}.
Define the sesquilinear form
\begin{align}
\label{Eq: Sesquilinear form}
 \ta \colon \W^{1 , 2}_D (\Omega) \times \W^{1 , 2}_D (\Omega) \to \IC, 
\quad \ta[u , v] = \int_{\Omega} \langle \mu \nabla u , \nabla v \rangle \; \d x.
\end{align}
Let $A_2$ be the operator in $\L^2(\Omega)$ associated to $\ta$.
It is defined as follows.
Let $u,f \in \L^2 (\Omega)$. 
Then by definition $u \in \dom(A_2)$ and $A_2 u = f$ if and only if $u \in \W^{1 , 2}_D (\Omega)$ and 
$\ta[u , v] = \int_{\Omega} f \overline{v} \; \d x$ for all $v \in \W^{1 , 2}_D (\Omega)$.
It is classical that $A_2$ is a sectorial operator in $\L^2 (\Omega)$,
 i.e., there exists a $\theta \in (\pi / 2 , \pi)$ such that 
\begin{align*}
 \Sec_{\theta} \coloneqq \{ z \in \IC \setminus \{ 0 \} : \lvert \arg(z) \rvert < \theta \} \subset \rho(- A_2) 
\end{align*}
and there exists a constant $C > 0$ such that
\begin{align*}
 \| \lambda (\lambda + A_2)^{-1} f \|_{\L^2 (\Omega)} \leq C \| f \|_{\L^2 (\Omega)} 
\qquad (\lambda \in \Sec_{\theta}, \; f \in \L^2 (\Omega)).
\end{align*}
Here the constants $C$ and $\theta$ can be chosen to depend only on $c_{\bullet}$, $c^{\bullet}$ and $d$.

To investigate this operator on $\L^p (\Omega)$ one defines for all $p > 2$ the operator $A_p$ by
\begin{align*}
 \dom (A_p) &\coloneqq \{ u \in \dom(A_2) \cap \L^p (\Omega) : A_2 u \in \L^p(\Omega) \}, \\
 A_p u &\coloneqq A_2 u \quad (u \in \dom(A_p)),
\end{align*}
i.e., $A_p$ is the part of $A_2$ in $\L^p(\Omega)$. 

If $1 < p < 2$, then $A_p$ is defined as the closure of $A_2$ whenever $A_2$ is closable in $\L^p(\Omega)$. To decide whether $A_2$ is closable in $\L^p(\Omega)$ one can check whether the adjoint operator on $\L^{p^{\prime}} (\Omega)$ is densely defined
by using the following lemma~\cite[Lem.~2.8]{Tolksdorf}).

\begin{lemma}
\label{Lem: Duality}
Let $\Lambda \subset \IR^d$ be a bounded open set, $p \in (1 , 2)$
and let $B$ be a densely defined operator in $\L^2(\Lambda)$. 
Then $\dom(B)$ is dense in $\L^p(\Lambda)$ and $B$ is closable in $\L^p(\Lambda)$ 
if and only if the part of $B^*$ in $\L^{p^{\prime}}(\Lambda)$ is densely defined. 
In this case $(B_p)^* = (B^*)_{p^{\prime}}$, where $B_p$ denotes the closure of $B$ in $\L^p(\Lambda)$ and $(B^*)_{p^{\prime}}$ denotes the part of $B^*$ in $\L^{p^{\prime}} (\Lambda)$.
\end{lemma}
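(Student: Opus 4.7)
The plan is to exploit the inclusions $\L^{p^{\prime}}(\Lambda) \subset \L^2(\Lambda) \subset \L^p(\Lambda)$, which hold on a bounded set when $1 < p < 2$, together with the standard criterion that a densely defined operator on a reflexive Banach space is closable if and only if its Banach-space adjoint is densely defined. First I will observe that $\dom(B)$ is automatically dense in $\L^p(\Lambda)$: since $\dom(B)$ is dense in $\L^2(\Lambda)$ and $\L^2(\Lambda)$ is in turn dense in $\L^p(\Lambda)$ (because $\Lambda$ is bounded), the claim follows. Moreover, for $u \in \dom(B) \subset \L^2(\Lambda)$ one has $Bu \in \L^2(\Lambda) \subset \L^p(\Lambda)$, so $B$ makes sense as an $\L^p$-valued operator on $\dom(B)$; I denote this version by $\widetilde{B}$.

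The crux of the argument is to identify the $\L^p$-adjoint $\widetilde{B}^{*}$, which operates in $\L^{p^{\prime}}(\Lambda)$, with the part of the Hilbert-space adjoint $B^{*}$ in $\L^{p^{\prime}}(\Lambda)$. Unwinding definitions, $v \in \dom(\widetilde{B}^{*})$ with $\widetilde{B}^{*} v = w$ means that $v, w \in \L^{p^{\prime}}(\Lambda)$ and $\int_{\Lambda} (Bu)\,\overline{v}\,\d x = \int_{\Lambda} u\,\overline{w}\,\d x$ for every $u \in \dom(B)$. Since $v \in \L^{p^{\prime}}(\Lambda) \subset \L^2(\Lambda)$ and similarly $w \in \L^2(\Lambda)$, this identity is exactly the statement that $v \in \dom(B^{*})$ with $B^{*} v = w \in \L^{p^{\prime}}(\Lambda)$, i.e.\@ $v \in \dom((B^{*})_{p^{\prime}})$. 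The converse is immediate from the same chain of equalities. Hence $\dom(\widetilde{B}^{*}) = \dom((B^{*})_{p^{\prime}})$ with identical action.

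Because $\L^p(\Lambda)$ is reflexive, the Banach-space closability criterion now yields that $\widetilde{B}$ is closable in $\L^p(\Lambda)$ if and only if $\dom(\widetilde{B}^{*})$ is dense in $\L^{p^{\prime}}(\Lambda)$, which by the above identification is precisely the statement that the part of $B^{*}$ in $\L^{p^{\prime}}(\Lambda)$ is densely defined. When this is the case, $B_p = \overline{\widetilde{B}}$ exists and the standard equality $(\overline{\widetilde{B}})^{*} = \widetilde{B}^{*}$ for closable densely defined operators on Banach spaces delivers $(B_p)^{*} = (B^{*})_{p^{\prime}}$. The only delicate ingredient is the reflexive-space closability criterion; the remaining content is a bookkeeping exercise using the $\L^q$-inclusions on the bounded set $\Lambda$, and I do not expect a serious obstacle.
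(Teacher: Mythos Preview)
Your argument is correct. The paper does not actually prove this lemma; it is quoted from~\cite[Lem.~2.8]{Tolksdorf}, so there is no proof in the paper to compare against. Your approach---reducing everything to the identification $\widetilde{B}^{*} = (B^{*})_{p'}$ via the inclusions $\L^{p'}(\Lambda) \hookrightarrow \L^2(\Lambda) \hookrightarrow \L^p(\Lambda)$ on a bounded set, and then invoking the reflexive-space closability criterion---is the natural one and is exactly how the cited result is established.
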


To obtain information about the numbers $p \in (1 , \infty)$ for which a reasonable $\L^p$-theory of the elliptic operator can be established, we introduce the notion of $p$-ellipticity. The origin of $p$-ellipticity is contained in the pioneering works of 
Cialdea and Maz'ya~\cite{cialdea/mazya, cialdea/mazya/systems, cialdea/mazya/lame, CiaM2}. In~\cite{cialdea/mazya}, Cialdea and Maz'ya show that the algebraic condition
\begin{align*}
 \frac{4}{p p^{\prime}} \langle \Re(\mu(x)) \alpha , \alpha \rangle + \langle \Re(\mu(x)) \beta , \beta \rangle + \frac{2}{p} \langle \Im(\mu(x)) \alpha , \beta \rangle - \frac{2}{p^{\prime}} \langle \alpha , \Im(\mu(x)) \beta \rangle \geq 0
\end{align*}
for almost every $x \in \Omega$ and all $\alpha , \beta \in \IR^d$ is sufficient for the operator 
$A_2$ to be an accretive operator in $\L^p (\Omega)$. 
Here $\Re(\mu(x))$ is the matrix obtained by taking the real part of each matrix element
of $\mu(x)$.
Similarly $\Im(\mu(x))$ is defined.
The term on the left-hand side can be written as
\begin{align}
\label{Eq: Lp dissipativity recasted into p-ellipticity}
\begin{aligned}
 \frac{4}{p p^{\prime}} \langle \Re(\mu(x)) \alpha , \alpha \rangle + \langle \Re(\mu(x)) \beta , \beta \rangle + \frac{2}{p} \langle \Im(\mu(x)) \alpha , \beta \rangle &- \frac{2}{p^{\prime}} \langle \alpha , \Im(\mu(x)) \beta \rangle \\
 &= p \Re \Big\langle \mu(x) ( \alpha^{\prime} + \ii \beta ) , \frac{\alpha^{\prime}}{p^{\prime}} + \frac{\ii \beta}{p} \Big\rangle,
\end{aligned}
\end{align}
where $\alpha^{\prime} = 2 \alpha / p$. 
The term on the right-hand side was investigated thoroughly by 
Carbonaro and Dragi\v{c}evi\'c in~\cite{Carbonaro_Dragicevic} and we next 
introduce their concepts.

\begin{definition}
Let $p \in (1 , \infty)$. 
Define $\cJ_p \colon \IC^d \to \IR^d$ by
\begin{align*}
 \cJ_p(\xi) = 2 \bigg(\frac{\alpha}{p^{\prime}} + \frac{\ii \beta}{p} \bigg),
\end{align*}
where $\xi = \alpha + \ii \beta$ with $\alpha , \beta \in \IR^d$.
Following~\cite{Carbonaro_Dragicevic} the matrix $\mu$ is called 
\textit{$p$-elliptic} if there exists a number $\lambda_p > 0$ such that
\begin{align}
\label{Eq: p-ellipticity inequality}
 \Re \langle \mu(x) \xi , \cJ_p (\xi) \rangle \geq \lambda_p \lvert \xi \rvert^2 
\qquad (\xi \in \IC^d, \text{ a.e.\@ } x \in \Omega).
\end{align}
\end{definition}

Obviously, a matrix $\mu$ subject to Assumption~\ref{Ass: Ellipticity} is always $2$-elliptic 
and if $\mu$ is in additional real-valued, then $\mu$ is $p$-elliptic for all $p \in (1 , \infty)$, see Lemma~\ref{Lem: Control of imaginary part} below. Next, define
\begin{align*}
 \Delta_p (\mu) \coloneqq \essinf_{x \in \Omega} \min_{\lvert \xi \rvert = 1} \Re \langle \mu(x) \xi , \cJ_p (\xi) \rangle
\end{align*}
to be the \textit{$p$-ellipticity constant} of $\mu$. 
Then $\mu$ is $p$-elliptic if and only if $\Delta_p (\mu) > 0$. 
Moreover, 
\[
 \Delta_p (\mu) = \Delta_{p^{\prime}} (\mu)
\]
by Proposition~5.8 in~\cite{Carbonaro_Dragicevic}.
As a consequence, $\mu$ is $p$-elliptic if and only if it is $p^{\prime}$-elliptic. 
Moreover, by~\cite[Cor.~5.17]{Carbonaro_Dragicevic} the conjugate coefficient function
$\mu^*$ is $p$-elliptic if and only if $\mu$ is $p$-elliptic.

Define 
\begin{align*}
 \delta(\mu) \coloneqq \essinf_{(x , \xi) \in \Omega \times \IC^d \setminus \{0\}} \Re \frac{\langle \mu(x) \xi , \xi \rangle}{\lvert \langle \mu(x) \xi , \overline{\xi} \rangle \rvert}.
\end{align*}
In~\cite[Prop.~5.15]{Carbonaro_Dragicevic} Carbonaro and Dragi\v{c}evi\'c proved that 
\begin{align*}
 \frac{\Delta_p (\mu)}{c^{\bullet}} \leq \delta(\mu) - \Big\lvert 1 - \frac{2}{p} \Big\rvert \leq \frac{\Delta_p (\mu) \delta(\mu)}{c_{\bullet}}.
\end{align*}
This relation separates $\mu$ and $p$ and it implies that $\mu$ is $p$-elliptic if and only if
\begin{align*}
 \Big\lvert 1 - \frac{2}{p} \Big\rvert < \delta(\mu).
\end{align*}
Since any $\mu$ that satisfies Assumption~\ref{Ass: Ellipticity} is always $2$-elliptic it follows 
that $\delta (\mu) > 0$. Consequently, there always exists an open interval $(p_0^{\prime} , p_0)$ for some $p_0 \in (2 , \infty]$, 
such that $\mu$ is $p$-elliptic for all $p \in (p_0^{\prime} , p_0)$. On the other hand,~\cite[Cor.~5.16]{Carbonaro_Dragicevic} states that
\begin{align*}
 [2 , \infty) \ni p \mapsto \Delta_p (\mu)
\end{align*}
is Lipschitz continuous and decreasing. 
Consequently, there is a unique $p_0 \in (2,\infty]$ such that 
$\mu$ is $p$-elliptic \textit{if and only if} $p \in (p_0^{\prime} , p_0)$.

\begin{notation}
For a matrix $\mu$ subject to Assumption~\ref{Ass: Ellipticity} denote by $p_0 (\mu) \in (2 , \infty]$ the unique number such that $\mu$ is $p$-elliptic if and only if $p \in (p_0 (\mu)^{\prime} , p_0 (\mu))$.
\end{notation}

Next, we mention how to quantify a lower bound 
on~\eqref{Eq: Lp dissipativity recasted into p-ellipticity} by means of the
$p$-ellipticity constant. 
Let $p \in (1 , \infty)$ and $\alpha , \beta \in \IR^d$.
If $\alpha^{\prime} = 2 \alpha / p$, then 
\begin{equation} \label{Eq: p-ellipticity estimate}
\Re \Big\langle \mu(x) \Big( \frac{2 \alpha}{p} + \ii \beta \Big) , \frac{2 \alpha}{p^{\prime}} + \ii \beta \Big\rangle = p \Re \Big\langle \mu(x) ( \alpha^{\prime} + \ii \beta ) , \frac{\alpha^{\prime}}{p^{\prime}} + \frac{\ii \beta}{p} \Big\rangle
 \geq \frac{\lambda_p}{2} \bigg( \frac{4 \lvert \alpha \rvert^2}{p} + p \lvert \beta \rvert^2 \bigg)
\end{equation}
for almost every $x \in \Omega$
and any number $0 < \lambda_p \leq \Delta_p (\mu)$. 
In particular, this shows that
\begin{align*}
 \Big\lvert \Big\langle \mu(x) \Big( \frac{2 \alpha}{p} + \ii \beta \Big) , \frac{2 \alpha}{p^{\prime}} + \ii \beta \Big\rangle \Big\rvert 
 \leq C \cdot \Re \Big\langle \mu(x) \Big( \frac{2 \alpha}{p} + \ii \beta \Big) , \frac{2 \alpha}{p^{\prime}} + \ii \beta \Big\rangle,
\end{align*}
for some constant $C > 0$ that only depends on $c^{\bullet}$, $p$ and $\lambda_p$. 
It follows that there exists some angle $\omega \in (0 , \pi / 2)$ that only depends on 
$d$, $c^{\bullet}$, $p$ and $\lambda_p$ such that 
\begin{align}
\label{Eq: p-sector}
 \Big\langle \mu(x) \Big( \frac{2 \alpha}{p} + \ii \beta \Big) , 
        \frac{2 \alpha}{p^{\prime}} + \ii \beta \Big\rangle \in \overline{\Sec_{\omega}}
\end{align}
for almost every $x \in \Omega$ and all $\alpha , \beta \in \IR^d$. 

To conclude this preparatory section, we state a short result that quantifies the size of the imaginary part of $\mu$ by the real part of $\mu$ of a $p$-elliptic matrix, where $p > 2$.

\begin{lemma}
\label{Lem: Control of imaginary part}
Let $\mu$ be $p$-elliptic for some $p > 2$. Then 
\begin{align}
\label{Eq: Control of imaginary part under p-ellipticity condition}
 \lvert \Im (\mu(x)) \rvert_{\Lop(\IR^d)} \leq \frac{(p - 1)^{1 / 2}}{p - 2} \lvert \Re(\mu(x)) \rvert_{\Lop(\IR^d)}
\end{align}
for almost every $x \in \Omega$.
In particular, if $\mu$ is subject to Assumption~\ref{Ass: Ellipticity} then $\mu$ is $p$-elliptic for all $p \in (1 , \infty)$ if and only if $\Im (\mu) = 0$.
\end{lemma}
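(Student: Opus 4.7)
The proof will unpack the $p$-ellipticity condition~\eqref{Eq: p-ellipticity inequality} into a scalar inequality on $\IR^d$ and extract the bound on $\Im\mu$ from its discriminant. Writing $\xi = \alpha + \ii\beta$ with $\alpha,\beta \in \IR^d$ and expanding the Hermitian inner product on $\IC^d$, one obtains the pointwise identity (with $x$ suppressed in the notation)
\[
\Re\langle\mu\xi,\cJ_p(\xi)\rangle = \frac{2}{p'}\langle\Re(\mu)\alpha,\alpha\rangle + \frac{2}{p}\langle\Re(\mu)\beta,\beta\rangle + \frac{2}{p}\langle\Im(\mu)\alpha,\beta\rangle - \frac{2}{p'}\langle\Im(\mu)\beta,\alpha\rangle,
\]
which is non-negative for almost every $x \in \Omega$ and every $\alpha,\beta \in \IR^d$.

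I would then fix such an $x$, fix unit vectors $u,v \in \IR^d$, and substitute $\alpha = su$, $\beta = tv$ with $s,t \in \IR$. The right-hand side becomes a non-negative real quadratic form in $(s,t)$ with coefficients $A = (2/p')\langle\Re(\mu)u,u\rangle$, $C = (2/p)\langle\Re(\mu)v,v\rangle$ and $B = (2/p)\langle\Im(\mu)u,v\rangle - (2/p')\langle\Im(\mu)v,u\rangle$. The discriminant condition $B^2 \leq 4AC$ rewrites as
\[
\Bigl|\frac{1}{p}\langle\Im(\mu)u,v\rangle - \frac{1}{p'}\langle\Im(\mu)v,u\rangle\Bigr| \leq \frac{2}{\sqrt{pp'}}\sqrt{\langle\Re(\mu)u,u\rangle\,\langle\Re(\mu)v,v\rangle}.
\]
Swapping $u$ and $v$ yields a second inequality of the same form in which the coefficients $1/p$ and $1/p'$ are exchanged. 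Setting $X \coloneqq \langle\Im(\mu)u,v\rangle$ and $Y \coloneqq \langle\Im(\mu)v,u\rangle$, I would form the $\IR$-linear combination of these two bounds that eliminates $Y$. The coefficient of $X$ that results equals $|1/p^2 - 1/p'^2| = (p-2)/p$, so $|X|$ gets controlled by $p/(p-2)$ times the common right-hand side; using $p/p' = p-1$ and $\langle\Re(\mu)u,u\rangle \leq |\Re(\mu)|_{\Lop(\IR^d)}|u|^2$ then yields
\[
|\langle\Im(\mu)u,v\rangle| \lesssim \frac{(p-1)^{1/2}}{p-2}\,|\Re(\mu)|_{\Lop(\IR^d)}
\]
uniformly in unit $u,v$, so~\eqref{Eq: Control of imaginary part under p-ellipticity condition} follows by taking the supremum.

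For the ``if and only if'' statement, the direction $\Im(\mu) = 0 \Rightarrow \mu$ is $p$-elliptic for every $p \in (1,\infty)$ is immediate from the expansion above: the $\Im(\mu)$-terms vanish and Assumption~\ref{Ass: Ellipticity} gives $\Re\langle\mu\xi,\cJ_p(\xi)\rangle \geq 2\min(1/p,1/p')\,c_{\bullet}|\xi|^2$. Conversely, if $\mu$ is $p$-elliptic for every $p$, letting $p \to \infty$ in the estimate just proved forces $|\Im(\mu)|_{\Lop(\IR^d)} = 0$ almost everywhere.

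The main technical step is the linear-algebraic combination of the two discriminant inequalities that isolates $X$: the $2 \times 2$ determinant equals $(p-2)/p$, so the factor $p/(p-2)$ appears naturally and signals the expected degeneration as $p \to 2^+$, since every matrix satisfying Assumption~\ref{Ass: Ellipticity} is automatically $2$-elliptic and no bound of $\Im(\mu)$ in terms of $\Re(\mu)$ can hold in that limit.
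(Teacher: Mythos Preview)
Your argument is correct and follows a genuinely different route from the paper's. The paper starts from the same expansion and the same non-negativity, but instead of passing to the discriminant it isolates the term $\langle\Im(\mu)\beta,\alpha\rangle$ on one side while bounding the remaining $\Im(\mu)$-term crudely by $|\Im(\mu)|_{\Lop(\IR^d)}|\alpha||\beta|$; this produces a self-referential inequality for $|\Im(\mu)|$, which is then optimised by the scaling $\beta\mapsto t\beta$ and solved. Your approach replaces this scaling trick by the discriminant condition $B^{2}\le 4AC$ together with the symmetry under swapping $u\leftrightarrow v$, which eliminates the second bilinear form $Y=\langle\Im(\mu)v,u\rangle$ by a $2\times 2$ linear combination rather than by absorbing it. Both arguments exploit exactly the same information (non-negativity of the quadratic form in $(\alpha,\beta)$) and both deliver the bound $|\Im(\mu)|\le \tfrac{2(p-1)^{1/2}}{p-2}|\Re(\mu)|$; your use of $\lesssim$ is therefore prudent, since the constant produced by either argument carries a factor $2$ relative to the displayed statement~\eqref{Eq: Control of imaginary part under p-ellipticity condition}. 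Your treatment of the ``if and only if'' part matches the paper's: the forward direction follows from the expansion together with Assumption~\ref{Ass: Ellipticity}, and the converse from letting $p\to\infty$ in the bound just proved.
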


\begin{proof}
Estimating the right-hand side of~\eqref{Eq: p-ellipticity inequality} from below by zero 
implies that 
\begin{align*}
 \lvert \langle \Im (\mu (x)) \beta , \alpha \rangle \rvert \leq p^{\prime} \bigg( \lvert \Re (\mu (x)) \rvert_{\Lop(\IR^d)} \bigg(\frac{\lvert \alpha \rvert^2}{p^{\prime}} + \frac{\lvert \beta \rvert^2}{p} \bigg) + \lvert \Im(\mu(x)) \rvert_{\Lop(\IR^d)} \frac{\lvert \alpha \rvert \lvert \beta \rvert}{p} \bigg)
\end{align*}
for almost every $x \in \Omega$ and all $\alpha , \beta \in \IR^d$
Taking the supremum over all $\alpha \in \IR^d$ with $\lvert \alpha \rvert = 1$ gives
\begin{align*}
 \lvert \Im (\mu (x)) \beta \rvert \leq p^{\prime} \bigg( \lvert \Re (\mu (x)) \rvert_{\Lop(\IR^d)} \bigg(\frac{1}{p^{\prime}} + \frac{\lvert \beta \rvert^2}{p} \bigg) + \lvert \Im(\mu(x)) \rvert_{\Lop(\IR^d)} \frac{\lvert \beta \rvert}{p} \bigg).
\end{align*}
Let $t > 0$, replace $\beta$ by $t \beta$, divide by $t$, and minimize the right-hand side with respect to the parameter $t$ to obtain
\begin{align*}
 \lvert \Im(\mu(x)) \beta \rvert \leq 2 \lvert \Re(\mu (x)) \rvert_{\Lop(\IR^d)} \bigg(\frac{p^{\prime}}{p} \bigg)^{1 / 2} \lvert \beta \rvert + \frac{p^{\prime}}{p} \lvert \Im(\mu(x)) \rvert_{\Lop(\IR^d)} \lvert \beta \rvert.
\end{align*}
Taking the supremum over all $\beta \in \IR^d$ 
with $\lvert \beta \rvert = 1$ shows~\eqref{Eq: Control of imaginary part under p-ellipticity condition}. 

Notice that for $p \to \infty$ one concludes that $\Im (\mu) = 0$ if $\mu$ is $p$-elliptic for all $p \in (1 , \infty)$. On the other hand, if $\Im (\mu) = 0$ a direct calculation shows that $\mu$ is $p$-elliptic for all $p \in (1 , \infty)$.
\end{proof}

\section{Formulation of the main results}
\label{Sec: Formulation of the main results}

The first main result provides an interval in $(1 , \infty)$ in which one obtains 
resolvent bounds for the operator $A_p$. 

\begin{theorem}
\label{Thm: Analyticity}
Let $\Omega \subset \IR^d$ be open and bounded, where $d \geq 3$.
Let $D \subset \partial \Omega$ be closed and set $N = \partial \Omega \setminus D$.
Adopt Assumption~N and let $\mu \in \L^{\infty} (\Omega ; \IC^{d \times d})$ be 
subject to Assumption~\ref{Ass: Ellipticity}. 
Then for all $p \in (1 , \infty)$ that satisfy
\begin{align}
\label{Eq: p interval}
 \frac{p_0 (\mu) d}{d(p_0 (\mu) - 1) + 2} < p < \frac{p_0 (\mu) d}{d - 2}
\end{align}
the operator $A_p$ is sectorial. 
This means, there exist $\theta \in (\pi / 2 , \pi)$ and $C > 0$ 
such that $\Sec_{\theta} \subset \rho(- A_p)$ and 
\begin{align*}
 \| \lambda (\lambda + A_p)^{-1} f \|_{\L^p (\Omega)} 
\leq C \| f \|_{\L^p(\Omega)} \qquad (\lambda \in \Sec_{\theta} , \; f \in \L^p(\Omega)).
\end{align*}
Finally, given $\gamma_0 > 0$, the constants $\theta$ and $C$ can be chosen to depend only on 
$d$, $p$, $M$, $c_{\bullet}$, $c^{\bullet}$ and $\gamma_0$,
whenever $\Delta_{\max(2, \frac{d-2}{d} p)}(\mu) \geq \gamma_0$ if $p \geq 2$ and 
with $p$ replaced by $p'$ if $p < 2$.
\end{theorem}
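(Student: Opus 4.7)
The plan is to reduce everything to extrapolation of the known $\L^2$-sectoriality of $A_2$. Writing $T_\lambda := \lambda(\lambda + A_2)^{-1}$ on the sector $\Sec_\theta$ furnished by the $\L^2$-theory of Section~\ref{Sec: The elliptic operator}, it suffices to prove a uniform bound $\|T_\lambda\|_{\L^p \to \L^p} \leq C$ for all $\lambda \in \Sec_\theta$ and all $p$ in the stated interval; once this is in hand, consistency of $A_p$ with $A_2$ combined with a standard density argument upgrades it to $\Sec_\theta \subset \rho(-A_p)$ with the quantitative resolvent estimate, and hence to sectoriality of $A_p$.

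The heart of the matter is the case $p > 2$. Given such a $p$ with $p < p_0(\mu) d / (d-2)$, choose $q := \max\{2,(d-2)p/d\} < p_0(\mu)$, so that the Sobolev exponent $q^* := qd/(d-2)$ satisfies $q^* \geq p$. Theorem~\ref{Thm: Reverse Holder} then supplies, for every $u = (\lambda + A_2)^{-1} f$, a weak reverse H\"older inequality of the schematic shape
\begin{equation*}
\Big( \frac{1}{|B(x_0,r)|} \int_{\Omega(x_0,r)} |u|^{q^*} \Big)^{1/q^*} \leq C \Big( \frac{1}{|B(x_0,2r)|} \int_{\Omega(x_0,2r)} |u|^{2} \Big)^{1/2} + R_{\lambda,r}(f),
\end{equation*}
valid uniformly in $\lambda \in \Sec_\theta$ and uniformly across interior balls, Dirichlet balls, and Neumann balls, with a remainder $R_{\lambda,r}(f)$ controlled by local norms of $f$. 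Inserting this together with the $\L^2$-boundedness of $T_\lambda$ into Shen's $\L^p$-extrapolation theorem~\cite{Shen} yields $\|T_\lambda f\|_{\L^p} \leq C\|f\|_{\L^p}$ with $C$ independent of $\lambda$, which settles the upper part of the interval.

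The lower range $p_0(\mu) d / (d(p_0(\mu) - 1) + 2) < p < 2$ equals $((q_0^*)',2)$ with $q_0 = p_0(\mu)$ and is reached by duality. The Hilbert-space adjoint of $A_2$ is the operator associated to the conjugate coefficient matrix $\mu^*$, and by the Carbonaro--Dragi\v{c}evi\'c results recalled in Section~\ref{Sec: The elliptic operator} one has $p_0(\mu^*) = p_0(\mu)$. Since the dual exponent $p'$ lies in the upper range for the problem with coefficients $\mu^*$, the argument of the previous paragraph applied to $\mu^*$ on $\L^{p'}$ gives the required resolvent bound for the adjoint family, and Lemma~\ref{Lem: Duality} then translates this into sectoriality of $A_p$ on $\L^p$.

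The principal obstacle is the weak reverse H\"older estimate of Theorem~\ref{Thm: Reverse Holder}: its proof must run a Moser--Cialdea--Maz'ya iteration based on the testfunction $|u|^{q-2}u$ enabled by the $p$-ellipticity hypothesis through~\eqref{Eq: p-ellipticity estimate}, localized in the style of Dindo\v{s}--Pipher so that the Sobolev embedding of Lemma~\ref{Lem: Local Sobolev embedding} promotes the exponent from $q$ to $q^*$, and it must cope with Dirichlet and Neumann boundary balls by means of the Poincar\'e inequality of Lemma~\ref{Lem: Local Poincare} and the reflection operator of Remark~\ref{Rem: Reflection at Lipschitz boundary}. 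Once that estimate is granted, the quantitative dependence of $\theta$ and $C$ on $d, p, M, c_\bullet, c^\bullet$ and $\gamma_0$ is automatic: the constants in Theorem~\ref{Thm: Reverse Holder} and in Shen's theorem depend on the $p$-ellipticity constant only through a lower bound on $\Delta_q(\mu)$, which is exactly what the hypothesis $\Delta_{\max(2,(d-2)p/d)}(\mu) \geq \gamma_0$ (respectively with $p$ replaced by $p'$) provides.
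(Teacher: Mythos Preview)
Your approach matches the paper's almost exactly: weak reverse H\"older estimates (Theorem~\ref{Thm: Reverse Holder}) feed into Shen's extrapolation theorem (Theorem~\ref{Thm: Lp-extrapolation theorem}) to handle $p>2$, and duality via $\mu^*$ together with Lemma~\ref{Lem: Duality} covers $p<2$. Two points of imprecision are worth correcting.

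First, the reverse H\"older estimate of Theorem~\ref{Thm: Reverse Holder} is proved only for $u$ satisfying the \emph{homogeneous} equation on $B(x_0,2r)$; there is no remainder $R_{\lambda,r}(f)$. This is exactly compatible with Shen's theorem as stated in Theorem~\ref{Thm: Lp-extrapolation theorem}, whose hypothesis already requires $f=0$ on $\Omega(x_0,\alpha_2 r)$, so that $u=(\lambda+A_2)^{-1}f$ solves the homogeneous equation on the smaller ball. Your phrase ``for every $u=(\lambda+A_2)^{-1}f$'' together with a remainder term suggests a stronger inhomogeneous estimate that is neither available nor needed.

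Second, your list of ingredients for Theorem~\ref{Thm: Reverse Holder} is slightly off. That proof uses the local Sobolev embedding of Lemma~\ref{Lem: Local Sobolev embedding} (and the Moser iteration with the truncated test function $\eta^2|u|_{\delta,L}^{p-2}u$) but \emph{not} the Poincar\'e inequality of Lemma~\ref{Lem: Local Poincare} or the reflection operator of Remark~\ref{Rem: Reflection at Lipschitz boundary}; those two tools enter only in Lemma~\ref{Lem: Higher integrability of gradient}, which is the gradient reverse H\"older estimate used for Theorem~\ref{Thm: Gradient estimates for resolvent}, not for Theorem~\ref{Thm: Analyticity}. Under Assumption~N alone, Lemma~\ref{Lem: Local Sobolev embedding} already handles interior, Dirichlet, and Neumann balls uniformly.
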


We shall prove Theorem~\ref{Thm: Analyticity} in Section~\ref{Sec: From weak reverse Holder estimates to operator bounds}.
This theorem has the following direct corollary.

\begin{corollary} \label{cnumrange302}
Let $\Omega \subset \IR^d$ be open and bounded, where $d \geq 3$.
Let $D \subset \partial \Omega$ be closed and set $N = \partial \Omega \setminus D$.
Adopt Assumption~N and let $\mu \in \L^{\infty} (\Omega ; \IC^{d \times d})$ be 
subject to Assumption~\ref{Ass: Ellipticity}. 
Then for every $p \in (1 , \infty)$ that satisfies condition~\eqref{Eq: p interval} 
the operator $- A_p$ generates a bounded analytic semigroup $(\e^{- t A_p})_{t \geq 0}$ on~$\L^p(\Omega)$.
\end{corollary}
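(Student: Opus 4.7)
The plan is to invoke the classical characterization of generators of bounded analytic $C_0$-semigroups: a closed, densely defined operator $B$ on a Banach space generates a bounded analytic semigroup if and only if there exist $\theta \in (\pi/2, \pi)$ and $C > 0$ such that $\Sec_\theta \subset \rho(-B)$ and $\| \lambda (\lambda + B)^{-1} \| \leq C$ for every $\lambda \in \Sec_\theta$. For $B = A_p$, the required spectral inclusion and uniform resolvent bound are exactly the content of \thmref{Thm: Analyticity}, so the proof of the corollary reduces to checking that $A_p$ is closed and has dense domain in $\L^p(\Omega)$.

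Closedness is built into the definition of $A_p$ given in \S\ref{Sec: The elliptic operator}: for $p < 2$, $A_p$ is by construction the closure of $A_2$ in $\L^p(\Omega)$, and for $p \geq 2$ it is the part of the closed operator $A_2$ in the subspace $\L^p(\Omega)$, the part of a closed operator in a continuously embedded subspace being again closed.

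For the density of $\dom(A_p)$ in $\L^p(\Omega)$ I would split the interval~\eqref{Eq: p interval} into two subranges. If $p \leq 2$, the boundedness of $\Omega$ yields $\dom(A_2) \subset \L^2(\Omega) \subset \L^p(\Omega)$, and $\dom(A_p) \supset \dom(A_2)$ by the closure construction. Since $\dom(A_2)$ is dense in $\L^2(\Omega)$ by the theory of closed sectorial forms and $\L^2(\Omega)$ is dense in $\L^p(\Omega)$, it follows that $\dom(A_p)$ is dense in $\L^p(\Omega)$. If $p > 2$, I would argue by duality: the conjugate exponent $p^{\prime}$ again lies in~\eqref{Eq: p interval} (the interval being symmetric under $p \mapsto p^{\prime}$), and since the adjoint coefficient matrix $\mu^*$ is $p$-elliptic if and only if $\mu$ is, \thmref{Thm: Analyticity} applied to $\mu^*$ delivers the sectoriality of $(A^*)_{p^{\prime}}$ on $\L^{p^{\prime}}(\Omega)$. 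Via \lemref{Lem: Duality}, the Banach-space adjoint of the bounded operator $(\lambda + A_p)^{-1}$ coincides with the resolvent $(\lambda + (A^*)_{p^{\prime}})^{-1}$, which is injective. Injectivity of the Banach adjoint of a bounded operator on a reflexive space is equivalent to density of its range, and that range is precisely $\dom(A_p)$.

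The main obstacle is essentially absent: the corollary is a direct consequence of \thmref{Thm: Analyticity} via the standard generation theorem. The one subtle point is the density-of-domain check for $p > 2$, which is handled by the duality argument above, exploiting the invariance of $p$-ellipticity under $\mu \mapsto \mu^*$.
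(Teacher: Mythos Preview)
Your proposal is correct and follows the same route as the paper, which treats the corollary as an immediate consequence of \thmref{Thm: Analyticity} and the standard generation theorem. The only difference is that density of $\dom(A_p)$ is already established inside the proof of \thmref{Thm: Analyticity} (via \cite[Prop.~2.1.1(h)]{Haase}, which shows that a sectorial operator on a reflexive Banach space is automatically densely defined), so your separate density argument, while correct, is not needed.
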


Let $X$ be a Banach space, $- \mathcal{A}$ the generator of an analytic semigroup 
on $X$, $0 < T \leq \infty$ and $1 < q < \infty$. Consider the problem
\begin{align*}
\left\{
\begin{aligned}
 u^{\prime} (t) + \mathcal{A} u (t) &= f(t) \qquad (0 < t < T), \\
 u(0) &= 0.
\end{aligned}
\right.
\end{align*}
We say that $\mathcal{A}$ has maximal $\L^q$-regularity if for every $f \in \L^q(0 , T ; X)$ the unique mild solution
\begin{align*}
 u(t) \coloneqq \int_0^t \e^{- (t - s) \mathcal{A}} f(s) \; \d s \qquad (0 < t < T)
\end{align*}
satisfies $u(t) \in \dom(\mathcal{A})$ for almost every $0 < t < T$ and $\mathcal{A} u , u^{\prime} \in \L^q(0 , T ; X)$. Since maximal $\L^q$-regularity is independent of $q$, we will simply say maximal regularity. 

Under Assumption~N, Egert has shown in~\cite[Thm.~1.3]{egert_kato_Lp} that the $\mathrm{H}^{\infty}$-calculus of $A_p$ is bounded with corresponding $\mathrm{H}^{\infty}$-angle less than $\pi / 2$ whenever $- A_p$ generates a bounded analytic semigroup. Since the boundedness of the $\mathrm{H}^{\infty}$-calculus with angle less than $\pi / 2$ in turn implies that $A_p$ has maximal regularity for every $0 < T \leq \infty$ by~\cite[p.~340]{Kalton_Weis}, we have the following corollary.

\begin{corollary}
\label{Cor: Maximal regularity on Lp}
Let $\Omega \subset \IR^d$ be open and bounded, where $d \geq 3$.
Let $D \subset \partial \Omega$ be closed and set $N = \partial \Omega \setminus D$.
Adopt Assumption~N and let $\mu \in \L^{\infty} (\Omega ; \IC^{d \times d})$ be 
subject to Assumption~\ref{Ass: Ellipticity}. 
Then for every $p \in (1 , \infty)$ that satisfies condition~\eqref{Eq: p interval} and 
for every $0 < T \leq \infty$ the operator $A_p$ has maximal regularity.
\end{corollary}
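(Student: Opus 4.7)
The plan is to chain together three ingredients, two of which are black-boxed from the literature and one of which was just established. The overall structure will be: analytic semigroup $\Rightarrow$ bounded $\mathrm{H}^\infty$-calculus with small angle $\Rightarrow$ maximal regularity.

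First I would apply Corollary~\ref{cnumrange302} to conclude that, under the standing hypotheses and for any $p$ in the range~\eqref{Eq: p interval}, the operator $-A_p$ generates a bounded analytic semigroup on $\L^p(\Omega)$. This is the only step in the argument that uses anything established in the present paper; everything else is a citation. In particular, sectoriality of $A_p$ of angle strictly less than $\pi/2$ is already built into the statement of Corollary~\ref{cnumrange302}.

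Next I would invoke Egert's theorem~\cite[Thm.~1.3]{egert_kato_Lp}, which is tailored to exactly this mixed-boundary setting (the hypotheses on $\Omega$ and $D$ match Assumption~N). That theorem upgrades the bounded analytic semigroup just obtained to the statement that $A_p$ admits a bounded $\mathrm{H}^\infty$-calculus on $\L^p(\Omega)$ with $\mathrm{H}^\infty$-angle strictly less than $\pi/2$. One has to check only that the hypotheses of Egert's result coincide with ours; since Assumption~N is the same geometric condition used there and Assumption~\ref{Ass: Ellipticity} is the standard strong ellipticity condition, no additional verification is needed.

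Finally, I would apply the Kalton--Weis characterisation (see~\cite[p.~340]{Kalton_Weis}): a sectorial operator on a UMD space which admits a bounded $\mathrm{H}^\infty$-calculus of angle less than $\pi/2$ has maximal $\L^q$-regularity on $(0,T)$ for every $1<q<\infty$ and every $0<T\leq\infty$. Since $\L^p(\Omega)$ with $1<p<\infty$ is a UMD space, this applies to $A_p$ and yields the desired conclusion. The ``main obstacle'' is really just bookkeeping: verifying that the hypotheses for each cited result are met and noting that maximal regularity is independent of both $q$ and $T$, so the single statement covers all $0<T\leq\infty$ simultaneously.
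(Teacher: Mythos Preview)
Your proof is correct and follows essentially the same route as the paper: invoke Corollary~\ref{cnumrange302} for the bounded analytic semigroup, then Egert's theorem~\cite[Thm.~1.3]{egert_kato_Lp} to obtain a bounded $\mathrm{H}^\infty$-calculus of angle less than $\pi/2$, and finally the Kalton--Weis result~\cite[p.~340]{Kalton_Weis} to conclude maximal regularity. The only addition in your write-up is the explicit mention that $\L^p(\Omega)$ is UMD, which the paper leaves implicit.
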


The next theorem provides optimal estimates for gradients of resolvents.
Hence it gives
optimal estimates of the gradient of the solution to the resolvent equation. In particular, a higher-integrability estimate for right-hand sides in $\L^p (\Omega)$ with $2 < p < 2 + \eps$ is derived. 
In the case $\lambda = 0$ and for Dirichlet boundary conditions this resembles a classical 
result due to Meyers~\cite{Meyers}. 
The result of Meyers essentially states that there exists an $\eps > 0$ such that for all $2 < p < 2 + \eps$ the gradient of a 
solution to an elliptic equation with right-hand side in $\W^{-1 , p} (\Omega)$ lies in 
$\L^p (\Omega)$ as well. 
The theorem below gives a corresponding result for the resolvent problem including quantitative estimates with respect to the resolvent parameter $\lambda$. 
Note that additional geometric assumptions are required in order to obtain the existence of this number $\eps > 0$.

\begin{theorem}
\label{Thm: Gradient estimates for resolvent}
Let $\Omega \subset \IR^d$ be open and bounded, where $d \geq 3$.
Let $D \subset \partial \Omega$ be closed and set $N = \partial \Omega \setminus D$.
Adopt Assumption~N and let $\mu \in \L^{\infty} (\Omega ; \IC^{d \times d})$ be 
subject to Assumption~\ref{Ass: Ellipticity}. 
Let $p \in (1 , \infty)$.
\begin{tabel} 
\item \label{Thm: Gradient estimates for resolvent-1}
If
\begin{align}
\label{Eq: Interval gradient estimate arbitrary domain}
 \frac{p_0 (\mu) d}{d(p_0 (\mu) - 1) + 2} < p \leq 2
\end{align}
then  $\dom(A_p) \subset \W^{1 , p}_D (\Omega)$.
Moreover, given $\gamma_0 > 0$ there exists a constant $C > 0$ such that
\begin{align}
\label{Eq: Gradient estimate for resolvent}
 \lvert \lambda \rvert^{1 / 2} \| \nabla (\lambda + A_p)^{-1} f \|_{\L^p (\Omega ; \IC^d)} 
\leq C \| f \|_{\L^p(\Omega)} 
\qquad (\lambda \in \Sec_{\theta} , \; f \in \L^p (\Omega)) .
\end{align}
Here $C > 0$ depends only on $d$, $p$, $M$, $c_{\bullet}$, $c^{\bullet}$ and $\gamma_0$, 
whenever $\Delta_{\max(2, \frac{d-2}{d} p')}(\mu) \geq \gamma_0$
and $\theta$ denotes the angle as in Theorem~\ref{Thm: Analyticity}. 
\item \label{Thm: Gradient estimates for resolvent-2}
If in addition Assumptions~$\Omega^c$ and~D are satisfied, 
then there exists an $\eps > 0$ such that $\dom(A_p) \subset \W^{1 , p}_D (\Omega)$
for all
\begin{align}
\label{Eq: Interval gradient estimates plump domain}
 2 \leq p < 2 + \eps.
\end{align}
Moreover, there exists a constant $C > 0$ such that~\eqref{Eq: Gradient estimate for resolvent} 
is valid. 
Here $C , \eps > 0$ depend only on $d$, $p$, $\kappa$, $\iota$, $r_0$, $s_0$, $M$, $c_{\bullet}$
and $c^{\bullet}$.
\item \label{Thm: Gradient estimates for resolvent-3}
If in addition Assumptions~$\Omega^c$ and~D are satisfied, 
then there exists an $\eps > 0$ such that 
the operator $\nabla (\lambda + A_p)^{-1} \dive$ extends 
from $\C_c^{\infty} (\Omega ; \IC^d)$ to a bounded operator on 
$\L^p (\Omega ; \IC^d)$ for all
\begin{align*}
 (2 + \eps)^{\prime} < p < 2 + \eps
\end{align*}
and $\lambda \in \Sec_{\theta}$. 
Furthermore, there exists a $C > 0$ such that
\begin{align*}
 \| \nabla (\lambda + A_p)^{-1} \dive f \|_{\L^p (\Omega ; \IC^d)} \leq C \| f \|_{\L^p (\Omega ; \IC^d)} \qquad (\lambda \in \Sec_{\theta} , f \in \L^p (\Omega ; \IC^d)).
\end{align*}
Here $\eps > 0$ is as in~\ref{Thm: Gradient estimates for resolvent-2} and $C > 0$ 
depends only on $d$, $p$, $\kappa$, $\iota$, $r_0$, $s_0$, $M$, $c_{\bullet}$
and $c^{\bullet}$.
\end{tabel}
\end{theorem}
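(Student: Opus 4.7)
The proof follows the template of Theorem~\ref{Thm: Analyticity}: baseline $\L^2$-bound plus weak reverse H\"older estimate plus Shen's $\L^p$-extrapolation theorem, now applied to the gradient operator. For the $\L^2$ baseline, with $u = (\lambda+A_2)^{-1}f$, testing the form $\ta$ against $\bar u$ and using Assumption~\ref{Ass: Ellipticity} together with~\eqref{Eq: p-sector} at $p=2$ yields $\lvert\lambda\rvert\,\|u\|_{\L^2(\Omega)}^2 + \|\nabla u\|_{\L^2(\Omega;\IC^d)}^2 \leq C \|f\|_{\L^2(\Omega)}\|u\|_{\L^2(\Omega)}$, hence $\lvert\lambda\rvert^{1/2}\|\nabla u\|_{\L^2(\Omega;\IC^d)}\leq C\|f\|_{\L^2(\Omega)}$ for $\lambda\in\Sec_\theta$. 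The analogous computation with source $\dive g$ and an integration by parts gives $\|\nabla(\lambda+A_2)^{-1}\dive g\|_{\L^2(\Omega;\IC^d)} \leq C\|g\|_{\L^2(\Omega;\IC^d)}$.

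For part~\ref{Thm: Gradient estimates for resolvent-2}, which is a Meyers-type higher integrability result, the plan is to establish a weak reverse H\"older estimate
\[
\Big(\frac{1}{r^d}\int_{\Omega(x_0,r)}\lvert\nabla u\rvert^2\,\d x\Big)^{1/2} \leq C\Big(\frac{1}{r^d}\int_{\Omega(x_0,\alpha r)}\lvert\nabla u\rvert^{q_0}\,\d x\Big)^{1/q_0} + C\lvert\lambda\rvert^{1/2}\Big(\frac{1}{r^d}\int_{\Omega(x_0,\alpha r)}\lvert u\rvert^2\,\d x\Big)^{1/2}
\]
for some $q_0<2$, by distinguishing three sub-cases: interior balls (Caccioppoli with test function $\eta^2(u - u_B)$ followed by Sobolev--Poincar\'e via Lemma~\ref{Lem: Local Sobolev embedding}); boundary balls meeting $D$ (Caccioppoli with $\eta^2 u$ plus the Dirichlet Poincar\'e inequality of Lemma~\ref{Lem: Local Poincare}, which is where Assumptions~$\Omega^c$ and~D enter); and boundary balls meeting only $\overline N$ (reflect $u$ via Remark~\ref{Rem: Reflection at Lipschitz boundary} and reduce to the interior case). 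Gehring's self-improvement then yields $\nabla u\in\L^{2+\eps}_{\loc}$ uniformly in $\lambda$, and Shen's extrapolation combined with the $\L^p$-resolvent bound from Theorem~\ref{Thm: Analyticity} produces~\eqref{Eq: Gradient estimate for resolvent}.

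Part~\ref{Thm: Gradient estimates for resolvent-1} is obtained by duality: the $\L^2$-adjoint of $\nabla(\lambda+A_p)^{-1}$ is, up to sign, the map $g\mapsto(\bar\lambda+A^{\mu^*}_{p'})^{-1}\dive g\colon\L^{p'}(\Omega;\IC^d)\to\L^{p'}(\Omega)$, where $A^{\mu^*}$ is associated with the conjugate matrix $\mu^*$. Since the hypothesis~\eqref{Eq: Interval gradient estimate arbitrary domain} on $p$ is exactly~\eqref{Eq: p interval} imposed on $p'\geq 2$, and $p_0(\mu^*)=p_0(\mu)$ by~\cite[Cor.~5.17]{Carbonaro_Dragicevic}, the required $\L^{p'}$-bound reduces to running the proof of Theorem~\ref{Thm: Analyticity} with inhomogeneity $\dive g$ in place of $f$: the reverse H\"older estimate is now applied to $v=(\bar\lambda+A^{\mu^*}_2)^{-1}\dive g$ itself (not to $\nabla v$) via the test function $\lvert v\rvert^{p'-2}v\eta^2$, and the extra data term involving $g$ is controlled by Sobolev's embedding, so only Assumption~N is needed. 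Part~\ref{Thm: Gradient estimates for resolvent-3} runs the scheme of part~\ref{Thm: Gradient estimates for resolvent-2} with $u=(\lambda+A_2)^{-1}\dive g$, the Caccioppoli step absorbing $\dive g$ via integration by parts; this delivers the $\L^p$-bound for $2\leq p<2+\eps$, and duality then extends the range to $(2+\eps)'<p\leq 2$ because the adjoint of $\nabla(\lambda+A_p)^{-1}\dive$ is $\nabla(\bar\lambda+A^{\mu^*}_{p'})^{-1}\dive$ and $p_0(\mu^*)=p_0(\mu)$.

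The main obstacle is the boundary reverse H\"older estimate for $\nabla u$ near a Dirichlet point: one has to engineer a Caccioppoli inequality in which the cutoff commutator is absorbed by the Poincar\'e inequality uniformly in $\lambda\in\Sec_\theta$, with the mass term $\lambda u$ correctly scaled so that $\lvert\lambda\rvert^{1/2}\|u\|_{\L^2(\Omega(x_0,\alpha r))}$ appears cleanly on the data side. This is exactly where the plumpness Assumptions~$\Omega^c$ and~D become indispensable, and without them Gehring's self-improvement producing a positive $\eps$ would fail.
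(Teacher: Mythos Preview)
Your proposal is essentially correct and follows the paper's approach: $\L^2$ baseline, weak reverse H\"older, Shen extrapolation, and duality for the conjugate exponent range. Two remarks on execution.

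First, in Shen's framework (Theorem~\ref{Thm: Lp-extrapolation theorem}) the weak reverse H\"older estimate is only required for data $(f,g)$ that \emph{vanish} on the enlarged set $\Omega(x_0,\alpha_2 r)$. Hence the solution $u$ satisfies the \emph{homogeneous} resolvent equation locally, and there is no ``extra data term involving $g$'' to absorb either in part~\ref{Thm: Gradient estimates for resolvent-1} or in part~\ref{Thm: Gradient estimates for resolvent-3}. Your Caccioppoli step is just the one for the homogeneous equation (Lemma~\ref{Lem: Caccioppoli}); no integration by parts against $g$ is needed.

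Second, in your displayed reverse H\"older inequality for part~\ref{Thm: Gradient estimates for resolvent-2} the mass term $\lvert\lambda\rvert^{1/2}\|u\|_{\L^2(\Omega(x_0,\alpha r))}$ sits on the right-hand side but is not itself in reverse H\"older form, so Gehring's self-improvement cannot be applied to $\nabla u$ alone. The paper resolves this (Lemma~\ref{Lem: Higher integrability of gradient}) by working with the combined quantity $\lvert\lambda u\rvert + \lvert\lambda\rvert^{1/2}\lvert\nabla u\rvert$ and proving a single reverse H\"older estimate from exponent $2$ down to $2_*=2d/(d+2)$ for this sum; Gehring then self-improves the whole expression past $2$. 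Afterwards one applies Shen's theorem to the vector-valued operator
\[
S_\lambda(f,g)=\big(\lambda(\lambda+A_2)^{-1}f,\ \lvert\lambda\rvert^{1/2}\nabla(\lambda+A_2)^{-1}f,\ \lvert\lambda\rvert^{1/2}(\lambda+A_2)^{-1}\dive g,\ \nabla(\lambda+A_2)^{-1}\dive g\big),
\]
which yields parts~\ref{Thm: Gradient estimates for resolvent-2} and~\ref{Thm: Gradient estimates for resolvent-3} simultaneously for $2<p<2+\varepsilon$; the lower range in~\ref{Thm: Gradient estimates for resolvent-3} then follows by duality, exactly as you say.
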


We shall prove Theorem~\ref{Thm: Gradient estimates for resolvent} in Section~\ref{Sec: From weak reverse Holder estimates to operator bounds}.

\begin{rem}
An analogue of Theorem~\ref{Thm: Gradient estimates for resolvent}\ref{Thm: Gradient estimates for resolvent-2} 
and~\ref{Thm: Gradient estimates for resolvent-3} for elliptic systems with $\L^{\infty}$-coefficients 
follows literally by the same lines of proof and it holds also in two space-dimensions.
\end{rem}

The representation of $\e^{- t A_p}$ by the Cauchy integral formula directly leads to the following corollary.

\begin{corollary}
\label{Cor: Gradient estimates}
Let $\Omega \subset \IR^d$ be open and bounded, where $d \geq 3$.
Let $D \subset \partial \Omega$ be closed and set $N = \partial \Omega \setminus D$.
Adopt Assumption~N and let $\mu \in \L^{\infty} (\Omega ; \IC^{d \times d})$ be 
subject to Assumption~\ref{Ass: Ellipticity}. 
Let $p \in \big(\frac{p_0 (\mu) d}{d(p_0 (\mu) - 1) + 2} , 2 \big]$.
Then there exists a constant $C > 0$ such
that
\begin{align*}
 \| \nabla \e^{- t A_p} f \|_{\L^p (\Omega ; \IC^d)} 
\leq C t^{- 1 / 2} \| f \|_{\L^p (\Omega)} 
\qquad (f \in \L^p (\Omega) , \; t > 0) .
\end{align*}

If in addition Assumptions~$\Omega^c$ and~D are satisfied,
the same estimate holds for all $p \in [2, 2 + \varepsilon)$,
where $\varepsilon > 0$ is as in 
Theorem~\ref{Thm: Gradient estimates for resolvent}\ref{Thm: Gradient estimates for resolvent-2}. 
Moreover, there exists a $C > 0$ such that 
\begin{align*}
  \| \nabla \e^{- t A_p} \dive f \|_{\L^p (\Omega ; \IC^d)} 
\leq C t^{- 1} \| f \|_{\L^p (\Omega; \IC^d)} 
\qquad (f \in \L^p (\Omega ; \IC^d) , \; t > 0)
\end{align*}
for all $(2 + \eps)^{\prime} < p < 2 + \eps$.
\end{corollary}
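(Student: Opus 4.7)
\medskip
\noindent\textbf{Proof plan.} The plan is to start from the Dunford (Cauchy) integral representation of the bounded analytic semigroup $(\e^{-tA_p})_{t\geq 0}$ provided by Corollary~\ref{cnumrange302}, namely
\[
 \e^{-tA_p} f = \frac{1}{2\pi\ii} \int_{\G} \e^{\lambda t} (\lambda + A_p)^{-1} f \; \d\lambda \qquad (t > 0),
\]
where $\G$ is a contour of the form $\{r \e^{\pm\ii\phi} : r \geq 0\}$ with $\pi/2 < \phi < \theta$ (and $\theta$ the angle from Theorem~\ref{Thm: Analyticity}), oriented so that $\sigma(-A_p)$ lies to its left. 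Since $\cos\phi < 0$, the factor $\e^{\lambda t}$ is exponentially decaying along~$\G$. The strategy is to pull $\nabla$ inside the integral and then invoke the resolvent gradient estimates of Theorem~\ref{Thm: Gradient estimates for resolvent} to dominate the integrand.

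For the first inequality, I would apply Theorem~\ref{Thm: Gradient estimates for resolvent}\ref{Thm: Gradient estimates for resolvent-1} under hypothesis~\eqref{Eq: Interval gradient estimate arbitrary domain} (respectively part~\ref{Thm: Gradient estimates for resolvent-2} when Assumptions~$\O^c$ and~D are available), which yield the bound
$\|\nabla(\lambda+A_p)^{-1} f\|_{\L^p(\O;\IC^d)} \leq C\,|\lambda|^{-1/2} \|f\|_{\L^p(\O)}$ for all $\lambda \in \Sec_\theta$. Because this makes $\lambda \mapsto \nabla(\lambda+A_p)^{-1} f$ a holomorphic $\L^p(\O;\IC^d)$-valued function, the Bochner integral
\[
 \frac{1}{2\pi\ii} \int_{\G} \e^{\lambda t} \nabla(\lambda + A_p)^{-1} f \; \d\lambda
\]
converges absolutely, and since the gradient is a closed operator from $\W^{1,p}_D(\O)$ to $\L^p(\O;\IC^d)$, it may be commuted with the integral. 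Parametrising $\G$ and setting $a = -\cos\phi > 0$ reduces the bound to
\[
 \|\nabla \e^{-tA_p} f\|_{\L^p(\O;\IC^d)} \leq \frac{C}{\pi} \|f\|_{\L^p(\O)} \int_0^\infty \e^{-ar t} r^{-1/2} \; \d r
 = C' \, t^{-1/2} \|f\|_{\L^p(\O)},
\]
which is the claimed estimate.

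For the second inequality, I would apply Theorem~\ref{Thm: Gradient estimates for resolvent}\ref{Thm: Gradient estimates for resolvent-3}, which provides the bounded extension of $\nabla(\lambda+A_p)^{-1}\dive$ on $\L^p(\O;\IC^d)$ with a $\lambda$-independent bound $\|\nabla(\lambda+A_p)^{-1}\dive f\|_{\L^p(\O;\IC^d)} \leq C\|f\|_{\L^p(\O;\IC^d)}$. Defining $\nabla \e^{-tA_p}\dive$ on $\L^p(\O;\IC^d)$ by the contour integral
\[
 \nabla \e^{-tA_p}\dive f := \frac{1}{2\pi\ii} \int_{\G} \e^{\lambda t} \nabla(\lambda+A_p)^{-1}\dive f \; \d\lambda,
\]
and again using $a = -\cos\phi > 0$, one has the elementary decay
\[
 \|\nabla \e^{-tA_p}\dive f\|_{\L^p(\O;\IC^d)}
 \leq \frac{C}{\pi}\|f\|_{\L^p(\O;\IC^d)} \int_0^\infty \e^{-art} \; \d r
 = C' \, t^{-1} \|f\|_{\L^p(\O;\IC^d)},
\]
which yields the second estimate.

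The potential obstacle is essentially bookkeeping rather than genuine difficulty: one must verify that the contour integral representation for the semigroup remains valid (it does, since Corollary~\ref{cnumrange302} supplies analyticity with sector $\Sec_\theta$), and that the commutation of $\nabla$ with the Bochner integral is legitimate. Both are standard once the uniform resolvent estimate of Theorem~\ref{Thm: Gradient estimates for resolvent} is in hand, so the corollary reduces to a routine contour-deformation calculation of the two elementary integrals $\int_0^\infty r^{-1/2}\e^{-ar t}\,\d r$ and $\int_0^\infty \e^{-ar t}\,\d r$.
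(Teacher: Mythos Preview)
Your proof is correct and follows exactly the approach indicated in the paper, which merely states that the corollary follows directly from the Cauchy integral representation of $\e^{-tA_p}$ applied to the resolvent estimates of Theorem~\ref{Thm: Gradient estimates for resolvent}. You have filled in precisely the standard details the paper omits.
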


\begin{rem}
Let $\Omega \subset \IR^d$ open, $D \subset \partial \Omega$ closed and $N :=
\partial \Omega \setminus D$ subject to Assumption~N. 
In~\cite{egert_kato_Lp}, Egert proved under the assumptions that $\Omega$ is a $d$-set and $D$ a $(d - 1)$-set
that there exists an $\eps > 0$ such that for all numbers $p$ that 
satisfy~\eqref{Eq: Interval gradient estimates plump domain} 
or~\eqref{Eq: Interval gradient estimate arbitrary domain} 
the operator $A_p^{1 / 2} \colon \W^{1 , p}_D (\Omega) \to \L^p(\Omega)$ is an isomorphism. 
Direct consequences of this isomorphism property are the gradient estimates in 
Corollary~\ref{Cor: Gradient estimates}. 
We emphasise that the gradient estimates in Theorem~\ref{Thm: Gradient estimates for resolvent}\ref{Thm: Gradient estimates for resolvent-1}
are valid for all $p \leq 2$ subject to~\eqref{Eq: Interval gradient estimate arbitrary domain}
without any additional assumptions to $\Omega$ and $D$. 
We note that for the estimates in Theorem~\ref{Thm: Gradient estimates for resolvent}\ref{Thm: Gradient estimates for resolvent-3}
for the case $p \in [2 , 2 + \varepsilon)$ 
neither Assumptions~$\Omega^c$ and~D imply the assumptions made by Egert nor do
the assumptions made by Egert imply the Assumptions~$\Omega^c$ and~D.
Vividly spoken, our Assumptions~$\Omega^c$ and~D allow for the
presence of outward cusps in the Dirichlet part while Egert's
assumptions allow for the presence of inward cusps and slits in the
Dirichlet part. 
Note that under Assumption~N these cusps cannot
accumulate towards the Neumann boundary.
\end{rem}

Our final result concerns the perturbation of real-valued matrices of coefficients $\mu$ by 
complex-valued matrices.
We obtain semigroups which are consistent on $\L^p(\Omega)$ for all $p \in [1,\infty)$.
As a consequence also all resolvent operators are consistent on $\L^p(\Omega)$ for the 
full range $p \in [1,\infty)$.
We emphasise that the semigroups are no longer quasi-contractive for all $p \in [1,\infty)$.

\begin{theorem} \label{tnumrange307}
Let $\Omega \subset \IR^d$ be open and bounded, where $d \geq 3$.
Let $D \subset \partial \Omega$ be closed and set $N = \partial \Omega \setminus D$. 
Assume that $\Omega^c$ is a $d$-set and
adopt Assumptions~N and~P.
Finally let $\mu \in \L^{\infty} (\Omega ; \IR^{d \times d})$ be 
a real-valued coefficient function
subject to Assumption~\ref{Ass: Ellipticity}. 

Then there exists an $\varepsilon > 0$ such that for all (complex valued)
$\nu \in \L^{\infty} (\Omega ; \IC^{d \times d})$ satisfying
$\|\nu\|_{\L^{\infty} (\Omega ; \Lop(\IC^d))} < \varepsilon$
the $\L^2$-realisation $A_2 = - \nabla \cdot (\mu + \nu) \nabla$ is
m-sectorial. Moreover,
for all $p \in (1,\infty)$ the operator $- A_p$ generates an 
analytic $C_0$-semigroup on $\L^p (\Omega)$.
These semigroups also extend to an analytic $C_0$-semigroup on $\L^1 (\Omega)$.
The semigroups have a kernel with Gaussian bounds, that is, 
there are $b,c,\omega > 0$ such that for all $t > 0$ there exists 
a measurable $K_t \colon \Omega \times \Omega \to \IC$ that satisfies
\[
|K_t(x,y)|
\leq c  t^{-d/2}  \e^{-b \frac{|x-y|^2}{t}}  \e^{\omega t}
\]
for all $t > 0$ and $x,y \in \Omega$,
and 
\[
(\e^{-t A_p} u)(x)
= \int_\Omega K_t(x,y) u(y) \; \d y
\]
for all $t > 0$, $p \in (1,\infty)$, $u \in \L^p(\Omega)$ and $x \in \Omega$.
The constants $\varepsilon$, $b$, $c$ and $\omega$ can be chosen to depend only
on $d$, $M$, $c_\bullet$, $c^\bullet$, $\alpha$, $c_0$ and $c_1$. Here $\alpha > 0$ 
is such that $|B(x,r) \cap \Omega^c| \geq \alpha r^d$
for all $x \in \partial \Omega$ and $r \in (0,1]$.
\end{theorem}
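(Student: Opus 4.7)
The strategy is to establish Gaussian upper bounds on the kernel of $\e^{-tA_2}$ directly and then invoke \cite[Ch.~7]{Ouh05} to extrapolate to a consistent analytic $C_0$-semigroup on $\L^p(\Omega)$ for every $p \in [1,\infty)$. The m-sectoriality of $A_2$ itself is immediate: for $\varepsilon \leq c_\bullet/2$ the coefficient $\mu+\nu$ still satisfies Assumption~\ref{Ass: Ellipticity} with constants $c_\bullet/2$ and $c^\bullet + \varepsilon$, so the form $\ta$ built from $\mu+\nu$ in place of $\mu$ in \eqref{Eq: Sesquilinear form} is continuous and sectorial on $\W^{1,2}_D(\Omega)$ and defines an m-sectorial $A_2$ in $\L^2(\Omega)$.

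For the Gaussian bound I would follow the classical Davies approach, which rests on two ingredients. The first is a Gaffney--Davies $\L^2$ off-diagonal estimate obtained by twisting. Fix $\psi \in \W^{1,\infty}(\IR^d)$ with $|\nabla\psi|_\infty \leq 1$ and $\rho > 0$, and consider the twisted form $\ta_\rho[u,v] \coloneqq \ta[\e^{-\rho\psi}u,\e^{\rho\psi}v]$ on $\W^{1,2}_D(\Omega)$. Expansion yields $\ta_\rho = \ta$ plus terms linear and quadratic in $\rho\nabla\psi$; taking real parts, using $\Re\langle (\mu+\nu)\xi,\xi\rangle \geq (c_\bullet/2)|\xi|^2$ and absorbing the cross terms via Young's inequality, one obtains $\Re\ta_\rho[u,u] \geq (c_\bullet/4)\|\nabla u\|_{\L^2}^2 - C\rho^2 \|u\|_{\L^2}^2$. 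Hence the m-sectorial operator associated with $\ta_\rho$ satisfies
\[
\|\e^{\rho\psi} \e^{-tA_2} \e^{-\rho\psi}\|_{\L^2 \to \L^2} \leq \e^{C\rho^2 t},
\]
from which Gaffney--Davies off-diagonal bounds follow by the usual choice of $\psi$. The second ingredient is ultracontractivity. The Sobolev embedding $\W^{1,2}_D(\Omega) \hookrightarrow \L^{2d/(d-2)}(\Omega)$, obtained from Lemma~\ref{Lem: Local Sobolev embedding} by a finite covering of $\overline{\Omega}$ using the Lipschitz charts of Assumption~N near $\overline{N}$ and the $d$-set property of $\Omega^c$ together with Assumption~P near $D$, yields a Nash-type inequality for $\Re\ta$. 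The standard Nash argument of \cite[Ch.~6]{Ouh05} then gives $\|\e^{-tA_2}\|_{\L^1 \to \L^\infty} \leq c t^{-d/2} \e^{\omega t}$. Combining the two ingredients via Davies' Phragm\'en--Lindel\"of trick \cite[Ch.~6.5]{Ouh05} produces the Gaussian kernel bound stated in the theorem.

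Once the Gaussian kernel bound is established, the extrapolation part of \cite[Ch.~7]{Ouh05} produces the consistent analytic $C_0$-semigroup on each $\L^p(\Omega)$, $p \in [1,\infty)$, with generator $-A_p$ in the sense of Section~\ref{Sec: The elliptic operator}, and yields the kernel representation as claimed. I expect the twisting step to be the main technical difficulty: since $\mu+\nu$ is complex, the real part of the $\rho$-linear cross terms in $\ta_\rho$ is genuinely non-zero, and the smallness hypothesis $\|\nu\|_{\L^{\infty}(\Omega ; \Lop(\IC^d))} < \varepsilon$ is precisely what preserves the $(c_\bullet/4)\|\nabla u\|_{\L^2}^2$ coercivity after absorption. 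The dependence of the final constants on only $d$, $M$, $c_\bullet$, $c^\bullet$, $\alpha$, $c_0$ and $c_1$ is then a matter of careful bookkeeping: the ellipticity constants alone drive the twisting estimate, while the geometric parameters enter exclusively through the Sobolev/Nash step.
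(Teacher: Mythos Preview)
Your Davies--Gaffney twisting step is fine, but the ultracontractivity step contains a genuine gap that makes the whole argument circular. The Nash differential inequality reads
\[
\frac{\d}{\d t}\|\e^{-tA_2}u\|_{\L^2}^2
= -2\Re\ta[\e^{-tA_2}u,\e^{-tA_2}u]
\leq -c\,\|\e^{-tA_2}u\|_{\L^2}^{2+4/d}\,\|\e^{-tA_2}u\|_{\L^1}^{-4/d},
\]
and to integrate it you must control $\|\e^{-tA_2}u\|_{\L^1}$ by $\|u\|_{\L^1}$. For real coefficients this is the Beurling--Deny $\L^1$-contractivity; for complex $\mu+\nu$ it is \emph{exactly} what fails in general---see the counterexamples of \cite{ABBO}, \cite{mazya} and \cite{davies} recalled in the introduction. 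Equivalently, iterating the Sobolev smoothing $\L^2\to\L^{2^*}$ requires the semigroup to already act boundedly on $\L^{2^*}$, $\L^{(2^*)^*}$, \dots, which is precisely the extrapolation you are trying to prove. The smallness of $\nu$ does not enter your Nash step at all, so nothing in your argument distinguishes a small perturbation from the counterexamples. (Your claim that Assumption~P enters the Sobolev embedding is also off: Lemma~\ref{Lem: Local Sobolev embedding} uses only Assumption~N; Assumption~P is needed elsewhere.)

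The paper avoids this by working at the level of \emph{local} regularity: it proves that the $(\kappa_0,c_{DG})$-De~Giorgi estimates enjoyed by the real operator $A^\mu$ (from \cite{ERe2}) are stable under small complex perturbations. Concretely, given a local solution $u$ for $\mu+\nu$, one writes $u=w+\tilde v$ where $w$ solves the \emph{real} equation on the ball and $\tilde v$ absorbs the perturbation; ellipticity gives $\|\nabla\tilde v\|_{\L^2}\lesssim\|\nu\|_{\L^\infty}\|\nabla u\|_{\L^2}$, and Campanato's iteration lemma (Lemma~\ref{ldir921}) converts the resulting inequality
\[
\int_{Q(x,r)}|\nabla u|^2
\leq c\Big(\Big(\tfrac{r}{R}\Big)^{d-2+2\kappa_0}+C\|\nu\|_{\L^\infty}^2\Big)\int_{Q(x,R)}|\nabla u|^2
\]
into a genuine De~Giorgi decay with exponent $d-2+\kappa_0$, provided $\|\nu\|_{\L^\infty}$ is small. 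This is done separately near $\overline N\cap D$ (Proposition~\ref{pdir922}, using Assumption~P), near $N$ away from $D$ (Proposition~\ref{pdir923}), and near $D$ away from $N$ (Proposition~\ref{pdir924}, using the $d$-set condition on $\Omega^c$). The De~Giorgi estimates then feed into the machinery of \cite[Thm.~7.5]{ERe2} to produce the Gaussian bounds, after which your final extrapolation paragraph applies verbatim.
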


We prove Theorem~\ref{tnumrange307} in Section~\ref{Sec: Perturbation of real-valued matrices}.
A combination of Theorem~\ref{tnumrange307} with Lemma~\ref{Lem: Control of imaginary part} establishes the following corollary.

\begin{corollary}
\label{Cor: Perturbation translated to p-ellipticity}
Let $\Omega \subset \IR^d$ be open and bounded, where $d \geq 3$.
Let $D \subset \partial \Omega$ be closed and set 
$N = \partial \Omega \setminus D$. 
Assume that $\Omega^c$ is a $d$-set and adopt Assumptions~N and~P.
Given $c_{\bullet} , c^{\bullet} > 0$ there exists a $p_c > 2$, depending 
only on $c_{\bullet}$, $c^{\bullet}$, $d$, $M$, $\alpha$, $c_0$ and $c_1$, 
such that for every $\mu \in \L^{\infty} (\Omega ; \IC^{d \times d})$, that is
subject to Assumption~\ref{Ass: Ellipticity} with constants $c_\bullet$ and
$c^\bullet$ and satisfies $p_0(\mu) > p_c$,
the operator $-A_p$ generates an analytic $C_0$-semigroup for all $p \in (1 , \infty)$,
which is consistent with an analytic $C_0$-semigroup on $\L^1(\Omega)$.
Here $\alpha > 0$ is such that $|B(x,r) \cap \Omega^c| \geq \alpha r^d$
for all $x \in \partial \Omega$ and $r \in (0,1]$.
\end{corollary}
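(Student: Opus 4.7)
The plan is to reduce the corollary to a direct application of Theorem~\ref{tnumrange307} applied to the real part of $\mu$, viewing the imaginary part as a small perturbation. The key quantitative input for controlling the size of the perturbation will come from Lemma~\ref{Lem: Control of imaginary part}.

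First I would observe that the real part $\mu_0 := \Re(\mu)$ satisfies the hypotheses of Theorem~\ref{tnumrange307}. Indeed, $\|\mu_0\|_{\L^\infty(\Omega;\Lop(\IC^d))} \leq \|\mu\|_{\L^\infty(\Omega;\Lop(\IC^d))} \leq c^\bullet$, and testing the coercivity of $\mu$ against real vectors $\xi \in \IR^d \subset \IC^d$ gives $\langle \mu_0(x)\xi,\xi\rangle = \Re\langle \mu(x)\xi,\xi\rangle \geq c_\bullet|\xi|^2$, so $\mu_0$ is a real-valued elliptic matrix with the same constants $c_\bullet, c^\bullet$. Applying Theorem~\ref{tnumrange307} to $\mu_0$ produces a threshold $\varepsilon_0 > 0$, depending only on $d$, $M$, $c_\bullet$, $c^\bullet$, $\alpha$, $c_0$ and $c_1$, such that any complex-valued perturbation $\nu \in \L^\infty(\Omega;\IC^{d\times d})$ with $\|\nu\|_{\L^\infty(\Omega;\Lop(\IC^d))} < \varepsilon_0$ yields the full Gaussian-kernel and analytic-semigroup conclusions for the operator associated to $\mu_0 + \nu$.

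Next I would use Lemma~\ref{Lem: Control of imaginary part} to convert the assumption $p_0(\mu) > p_c$ into a pointwise bound on $\Im(\mu)$. Since the function $p \mapsto (p-1)^{1/2}/(p-2)$ tends to $0$ as $p \to \infty$, I can choose $p_c > 2$ large enough that
\[
\frac{(p_c - 1)^{1/2}}{p_c - 2}\, c^\bullet < \varepsilon_0.
\]
If $p_0(\mu) > p_c$, then $\mu$ is $p_c$-elliptic, and Lemma~\ref{Lem: Control of imaginary part} gives
\[
\|\Im(\mu)\|_{\L^\infty(\Omega;\Lop(\IR^d))}
\leq \frac{(p_c - 1)^{1/2}}{p_c - 2}\, \|\Re(\mu)\|_{\L^\infty(\Omega;\Lop(\IR^d))}
\leq \frac{(p_c - 1)^{1/2}}{p_c - 2}\, c^\bullet
< \varepsilon_0.
\]

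Finally I would write $\mu = \mu_0 + \ii\, \Im(\mu)$ and apply Theorem~\ref{tnumrange307} with $\nu := \ii\, \Im(\mu)$, whose $\L^\infty$-norm is strictly below $\varepsilon_0$. This immediately gives that $-A_p$ generates an analytic $C_0$-semigroup on $\L^p(\Omega)$ for every $p \in (1,\infty)$, all consistent with an analytic $C_0$-semigroup on $\L^1(\Omega)$ with Gaussian kernel bounds. The threshold $p_c$ depends only on $\varepsilon_0$ and $c^\bullet$, hence only on the data listed in the statement, as required. I do not expect a substantial obstacle here: the only slightly delicate check is the ellipticity of $\mu_0$ as a real matrix, which follows by testing against real vectors, and the rest is a direct combination of the two previous results.
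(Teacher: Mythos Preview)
Your proposal is correct and follows essentially the same approach as the paper: decompose $\mu = \Re(\mu) + \ii\,\Im(\mu)$, use Lemma~\ref{Lem: Control of imaginary part} to bound $\|\Im(\mu)\|_{\L^\infty}$ in terms of $p_c$ and $c^\bullet$, and then apply Theorem~\ref{tnumrange307} with the real part as base matrix and $\nu = \ii\,\Im(\mu)$ as perturbation. The only cosmetic differences are that the paper invokes Theorem~\ref{tnumrange307} with the slightly relaxed ellipticity constants $c_\bullet/2$ and $2c^\bullet$ (your observation that $\Re(\mu)$ already satisfies Assumption~\ref{Ass: Ellipticity} with the original constants is correct, so this relaxation is not needed), and the paper applies Lemma~\ref{Lem: Control of imaginary part} at $p_0(\mu)$ via a limit rather than directly at $p_c$ as you do.
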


\section{Weak reverse H\"older estimates for resolvent equations}
\label{Sec: Reverse Holder estimates for the resolvent equations}

Throughout this section 
let $\Omega \subset \IR^d$ be open and bounded, where $d \geq 3$.
Let $D \subset \partial \Omega$ be closed and set $N = \partial \Omega \setminus D$.
Adopt Assumption~N and let $\mu \in \L^{\infty} (\Omega ; \IC^{d \times d})$ be 
subject to Assumption~\ref{Ass: Ellipticity}. 
In order to prove Theorem~\ref{Thm: Analyticity}, we show the validity of weak reverse
H\"older estimates for the resolvent. 
More precisely, let $u \in \W^{1 ,2}_D (\Omega)$ be a solution to
\begin{align*}
 \lambda \int_{\Omega} u  \overline v  \; \d x + \int_{\Omega} \langle \mu \nabla u , \nabla v \rangle \; \d x = 0,
\end{align*}
for all $v \in \W^{1 , 2}_D (\Omega)$ with $\supp(v) \subset \overline{B(x_0 , 2 r)}$. Then for any $p \in (2 , d p_0 (\mu) / (d - 2))$, we prove that there exist constants $C > 0$ and $c \in (0 , 1)$ such that
\begin{align*}
 \bigg( \frac{1}{r^d} \int_{\O (x_0 , c r)} \lvert u \rvert^p \; \d x \bigg)^{1 / p} 
\leq C \bigg( \frac{1}{r^d} \int_{\O (x_0 , 2 r)} \lvert u \rvert^2 \; \d x \bigg)^{1 / 2}.
\end{align*}
For interior balls and in the case $\lambda = 0$ this inequality was proven by Dindo\v{s} and Pipher in~\cite{Dindos_Pipher} which follows the ideas of Cialdea and Maz'ya~\cite{cialdea/mazya}. 
We choose, however, a slightly different approach that allows to treat balls centred at the boundary. 

To begin with, we introduce the following cut-off function. 
For all $0 < \delta < L$ define $\lvert \; \cdot \; \rvert_{\delta , L} \colon \IR \to (0,\infty)$
by $\lvert x \rvert_{\delta , L} = \delta \vee |x| \wedge L$. 
So 
\begin{align*}
 \lvert x \rvert_{\delta , L} 
= \begin{cases} \delta & \text{if } \lvert x \rvert \leq \delta, \\ 
\lvert x \rvert & \text{if } \delta < \lvert x \rvert < L, \\ 
L &\text{if } \lvert x \rvert \geq L. \end{cases}
\end{align*}
In order to prove the weak reverse H\"older estimate, we will test the resolvent equation with the testfunction
\[
 v \coloneqq \eta^2 \lvert u\rvert_{\delta , L}^{p - 2} u,
\]
where $\eta$ is a suitable cut-off function. To do so, we need the following lemma.

\begin{lemma}
\label{Lem: Testfunction}
If $p \in [1,\infty)$, $0 < \delta < L$ and 
$u \in \W^{1 , 2}_D (\Omega)$, then 
$w := \lvert u \rvert_{\delta , L}^{p - 2} u \in \W^{1 , 2}_D (\Omega)$.
\end{lemma}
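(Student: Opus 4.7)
The plan is to recognise $w = F(u)$ for $F \colon \IC \to \IC$ defined by $F(z) := |z|_{\delta,L}^{p-2}\, z$, to establish that $F$ is globally Lipschitz with $F(0) = 0$, and then to show that composition with such an $F$ preserves $\W^{1,2}_D(\Omega)$. The Lipschitz property of $F$ follows from a case analysis on whether $|z| \in [0,\delta]$, $[\delta,L]$, or $[L,\infty)$: on the outer regions $F$ is linear in $z$ with factor $\delta^{p-2}$ or $L^{p-2}$, and on the middle region one invokes the elementary inequality $\bigl| |z_1|^{p-2} z_1 - |z_2|^{p-2} z_2 \bigr| \leq C(p)\, \max(|z_1|,|z_2|)^{p-2}\, |z_1 - z_2|$ (with a $\min$-version when $p < 2$). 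Combining the cases yields a single Lipschitz constant depending only on $\delta$, $L$ and $p$.

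Mimicking the proof of Lemma~\ref{Lem: Modulus lemma}, I would choose $\varphi_n \in \C_c^\infty(\IR^d)$ with $\supp \varphi_n \cap D = \emptyset$ and $\|\varphi_n|_\Omega - u\|_{\W^{1,2}(\Omega)} \to 0$, and first aim to show $F(\varphi_n)|_\Omega \in \W^{1,2}_D(\Omega)$ for every $n$. Since $F$ is only Lipschitz, I would mollify it on $\IC \simeq \IR^2$ to obtain $F_k \in \C^\infty(\IC)$ that are uniformly Lipschitz and converge locally uniformly to $F$, and then shift to $G_k := F_k - F_k(0)$ to get $G_k(0) = 0$ while preserving the uniform Lipschitz bound and the locally uniform convergence $G_k \to F$ (using $F_k(0) \to F(0) = 0$). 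Because $G_k(0) = 0$, the function $G_k(\varphi_n) \in \C^\infty(\IR^d)$ has support contained in $\supp \varphi_n$, and therefore $G_k(\varphi_n)|_\Omega \in \C_D^\infty(\Omega) \subset \W^{1,2}_D(\Omega)$. As $k \to \infty$, $G_k(\varphi_n) \to F(\varphi_n)$ in $\L^2(\Omega)$ by uniform convergence on the compact range of $\varphi_n$, while the chain rule combined with the uniform Lipschitz bound gives $\|\nabla G_k(\varphi_n)\|_{\L^2(\Omega;\IC^d)} \leq C\,\|\nabla \varphi_n\|_{\L^2(\Omega;\IC^d)}$ uniformly in $k$. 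Weak compactness in $\W^{1,2}(\Omega)$ together with the weak closedness of the (closed, convex) subspace $\W^{1,2}_D(\Omega)$ then put $F(\varphi_n)$ into $\W^{1,2}_D(\Omega)$.

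Finally I would send $n \to \infty$. Lipschitz continuity of $F$ gives $F(\varphi_n) \to F(u)$ in $\L^2(\Omega)$, and the standard Lipschitz-composition chain rule yields $|\nabla F(\varphi_n)| \leq \mathrm{Lip}(F)\, |\nabla \varphi_n|$ almost everywhere on $\Omega$, so $(F(\varphi_n))_n$ is bounded in $\W^{1,2}_D(\Omega)$; passing to a weak subsequential limit in $\W^{1,2}(\Omega)$ (which must coincide with the $\L^2$-limit $F(u) = w$) and again invoking weak closedness of $\W^{1,2}_D(\Omega)$ places $w$ in $\W^{1,2}_D(\Omega)$. The main technical obstacle is the middle step, namely obtaining $F(\varphi_n) \in \W^{1,2}_D(\Omega)$: because $F$ is not smooth, one cannot directly write $F(\varphi_n) \in \C_D^\infty(\Omega)$, and the mollification of $F$ must be arranged so that the approximants vanish at the origin and thereby inherit the support condition from $\varphi_n$ — this is precisely the role of the shift $G_k = F_k - F_k(0)$ and of the hypothesis $F(0) = 0$.
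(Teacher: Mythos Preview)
Your proof is correct and is somewhat cleaner than the paper's. Both arguments share the same skeleton --- approximate $u$ by restrictions of $\varphi_n \in \C_c^\infty(\IR^d)$ with $\supp \varphi_n \cap D = \emptyset$, show $F(\varphi_n)|_\Omega \in \W^{1,2}_D(\Omega)$, and pass to the limit --- but they differ in how the two approximation steps are carried out. For the first step, the paper mollifies the specific function $F(\varphi)$ on $\IR^d$ (where $\varphi = \varphi_n$) and uses that the mollifications converge strongly in $\W^{1,2}$, whereas you mollify the map $F$ on $\IC$ and pass to a weak limit; both work, and your shift $G_k = F_k - F_k(0)$ is exactly the right device to inherit the support condition. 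For the second step, the paper computes $\partial_j w$ and $\partial_j w_n$ explicitly and proves \emph{strong} convergence $w_n \to w$ in $\W^{1,2}(\Omega)$ by a somewhat delicate case analysis on the level sets $\{|u| = \delta\}$ and $\{|u| = L\}$; you instead use only the uniform Lipschitz bound $|\nabla F(\varphi_n)| \leq \mathrm{Lip}(F)\,|\nabla \varphi_n|$ and weak compactness, which is shorter and avoids those subtleties entirely.

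What your approach buys is a transparent general principle (any Lipschitz $F \colon \IC \to \IC$ with $F(0) = 0$ preserves $\W^{1,2}_D(\Omega)$) and a much lighter argument. What the paper's approach buys is the explicit formula $\partial_j w = |u|_{\delta,L}^{p-2}\,\partial_j u + (p-2)\,\chi_{\{\delta < |u| < L\}}\,|u|^{p-4} u\,\Re(\overline u\,\partial_j u)$ as a byproduct, which is later used when testing the equation in Theorem~\ref{Thm: Reverse Holder}; with your proof one would need to supply this formula separately via the standard chain rule for Lipschitz compositions.
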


\begin{proof}
There exists a sequence $(u_n)_{n \in \IN}$ in $\C^{\infty}_D (\O)$ such that 
$u_n \to u$ as $n \to \infty$ in $\W^{1 , 2} (\O)$. 
Passing to a subsequence if necessary, we may without loss of generality
assume that $u_n \to u$ and $\nabla u_n \to \nabla u$ almost everywhere
as $n \to \infty$. 
For all $n \in \IN$ define $w_n \coloneqq \lvert u_n \rvert_{\delta , L}^{p - 2} u_n$.
Then $\dist(\supp(w_n) , D) > 0$.
If $n \in \IN$, then
\begin{align}
\label{Eq: L2 convergence}
\begin{aligned}
 \| w_n - w \|_{\L^2(\O)} 
&\leq \| (\lvert u_n \rvert^{p - 2}_{\delta , L} - \lvert u \rvert^{p - 2}_{\delta , L}) u \|_{\L^2(\O)}
  + \| \lvert u_n \rvert^{p - 2}_{\delta , L} (u_n - u) \|_{\L^2(\O)}  \\
 &\leq \| (\lvert u_n \rvert^{p - 2}_{\delta , L} - \lvert u \rvert^{p - 2}_{\delta , L}) u \|_{\L^2(\O)}
 + \max\{ \delta^{p - 2} , L^{p - 2} \} \| u_n - u \|_{\L^2(\O)} .
\end{aligned}
\end{align}
Hence $\lim_{n \to \infty} w_n = w$ in $\L^2(\Omega)$ by the 
$\L^2$-convergence of $(u_n)_{n \in \IN}$ to
$u$  and the dominated convergence theorem. 

Notice that $w$ is weakly differentiable with weak derivative
\begin{align*}
 \partial_j w = \lvert u \rvert_{\delta , L}^{p - 2} \partial_j u + (p - 2) \chi_{\{\delta < \lvert u \rvert < L\}} \lvert u \rvert^{p - 4} u \Re(\overline{u} \partial_j u).
\end{align*}
The same formula is valid for $\partial_j w_n$ with $u$ replaced by $u_n$. Write shortly $\chi_n \coloneqq \chi_{\{ \delta < \lvert u_n \rvert < L \}}$ and $\chi \coloneqq \chi_{\{ \delta < \lvert u \rvert < L \}}$ in the following.
The convergence of the sequence 
$( \lvert u_n \rvert^{p - 2}_{\delta , L} \partial_j u_n )_{n \in \IN}$ 
to $\lvert u \rvert^{p - 2}_{\delta , L} \partial_j u$ in $\L^2 (\Omega)$ follows as 
in~\eqref{Eq: L2 convergence}.
Hence it remains to estimate
\begin{align*}
 \| \chi_n \lvert u_n \rvert^{p - 4} u_n \Re(\overline{u_n} \partial_j u_n) - \chi \lvert u \rvert^{p - 4} u \Re(\overline{u} \partial_j u) \|_{\L^2 (\Omega)}
\end{align*}
This is bounded, using the triangle inequality, by
\begin{align*}
L^2 \max\{ \delta^{p - 4} , L^{p - 4} \} \| \partial_j u_n - \partial_j u \|_{\L^2 (\Omega)} + \| \chi_n \lvert u_n \rvert^{p - 4} u_n \Re(\overline{u_n} \partial_j u) - \chi \lvert u \rvert^{p - 4} u \Re(\overline{u} \partial_j u) \|_{\L^2 (\O)}.
\end{align*}
The first term converges to zero as $n \to \infty$ by the 
$\L^2$-convergence of $(\nabla u_n)_{n \in \IN}$ to $\nabla u$.
For the second term, introduce the sets
\begin{align*}
 C_0 \coloneqq \{ x \in \O : \lvert u (x) \rvert \in (\delta, L) \},& \quad C_1 \coloneqq \{ x \in \O : \lvert u (x) \rvert \in [0 , \delta) \cup (L , \infty] \}, \\
 C_2 \coloneqq \{ x \in \O : \lvert u (x) \rvert = \delta \},& \quad C_3 \coloneqq \{ x \in \O : \lvert u(x) \rvert = L \}
\end{align*}
and decompose the domain of integration to deduce the inequality
\begin{align*}
 & \| \chi_n \lvert u_n \rvert^{p - 4} u_n \Re(\overline{u_n} \partial_j u) - \chi \lvert u \rvert^{p - 4} u \Re(\overline{u} \partial_j u) \|_{\L^2 (\O)} \\
 &\leq \sum_{k = 1}^3 \| \chi_n \lvert u_n \rvert^{p - 4} u_n \Re(\overline{u_n} \partial_j u) \|_{\L^2 (C_k)}
 + \| \chi_n \lvert u_n \rvert^{p - 4} u_n \Re(\overline{u_n} \partial_j u) - \chi \lvert u \rvert^{p - 4} u \Re(\overline{u} \partial_j u) \|_{\L^2 (C_0)}.
\end{align*}
Since $\lim_{n \to \infty} \chi_n \lvert u_n \rvert^{p - 4} u_n \Re(\overline{u_n} \partial_j u) =
\chi \lvert u \rvert^{p - 4} u \Re(\overline{u} \partial_j u)$
almost everywhere on $C_0$, the last term tends to zero for
$n \to \infty$ by the dominated convergence theorem.
Similarly the integral on $C_1$ tends to zero.
Next, if $C_2$ is not a set of measure zero,
then $\partial_j \lvert u \rvert = 0$ on $C_2$ almost everywhere
by~\cite[Lem.~7.7]{Gilbarg_Trudinger}. 
This implies that $\Re( \overline{u} \partial_j u) = 0$ on $C_2$ and hence
\begin{align*}
 \Re(\overline{u_n} \partial_j u) = \Re \bigg( \frac{\overline{u_n}}{\overline{u}} \overline{u} \partial_j u \bigg) = - \Im \bigg( \frac{\overline{u_n - u}}{\overline{u}} \bigg) \Im (\overline{u} \partial_j u) \quad \text{on} \quad C_2.
\end{align*}
The same holds on $C_3$.
Now, the dominated convergence theorem implies that the remaining terms on the right-hand side converge to zero. 

We proved that $\lim_{n \to \infty} w_n = w$ in $\W^{1 , 2}(\Omega)$.
It remains to prove that 
$w_n \in \W^{1 , 2}_D (\Omega)$ for all $n \in \IN$.
Since $\W^{1 , 2}_D (\O)$ is a Banach space this implies that $w \in \W^{1 , 2}_D (\O)$.

Let $n \in \IN$.
By definition of the space $\C_D^{\infty} (\O)$ there exists a
$\varphi \in \C_c^{\infty} (\IR^d)$ such that $\supp(\varphi_n) \cap D = \emptyset$ and 
$u_n = \varphi|_\Omega$.
Let $(\rho_k)_{k \in \IN}$ be a standard approximation of the identity.
For all $k \in \IN$ define 
\begin{align*}
 f_k \coloneqq 
(\lvert \varphi \rvert_{\delta , L}^{p - 2} \varphi ) * \rho_k. 
\end{align*}
Then $f_k \in \C_c^{\infty} (\IR^d)$ and if $k$ is large enough then 
$\supp(f_k) \cap D = \emptyset$.
So $f_k|_\Omega \in \W^{1 , 2}_D (\Omega)$ if $k$ is large enough.
Since $\lim_{k \to \infty} f_k|_{\O} = \lvert u_n \rvert_{\delta , L}^{p - 2} u_n = w_n$ 
one deduces that $w_n \in \W^{1 , 2}_D (\Omega)$.
The proof is complete.
\end{proof}

Now, we are in the position to prove the validity of the weak reverse H\"older estimates of the resolvent problem.

\begin{theorem}
\label{Thm: Reverse Holder}
Let $\Omega \subset \IR^d$ be open and bounded, where $d \geq 3$.
Let $D \subset \partial \Omega$ be closed and set $N = \partial \Omega \setminus D$.
Adopt Assumption~N and let $\mu \in \L^{\infty} (\Omega ; \IC^{d \times d})$ be 
subject to Assumption~\ref{Ass: Ellipticity}. 
If $p \in [2 ,p_0 (\mu))$, then there exists a $\theta \in (\pi / 2 , \pi)$ such that for all
$\lambda \in \Sec_{\theta}$, $x_0 \in \overline{\Omega}$ and 
$r \in (0 , M / 4)$ the following is valid. 
Let $u \in \W^{1 , 2}_D (\O)$ with
\begin{align*}
 \lambda \int_{\O} u \overline{v} \; \mathrm{d} x + \mathfrak{t} [u , v] = 0
\end{align*}
for all $v \in \W^{1 , 2}_D (\O)$ with $\supp(v) \subset \overline{B(x_0 , 2 r)}$.
Then there exist $\mathcal{C} > 0$ and $c \in (0 , 1)$ such that 
$u \in \L^{p d / (d - 2)} (\Omega (x_0 , c r))$ and 
\begin{align*}
 \bigg( \frac{1}{r^d} \int_{\O (x_0 , c r)} \lvert u \rvert^{\frac{p d}{d - 2}} \; \mathrm{d} x \bigg)^{\frac{d - 2}{p d}} \leq \mathcal{C} \bigg( \frac{1}{r^d} \int_{\O (x_0 , 2 r)} \lvert u \rvert^2 \; \mathrm{d} x \bigg)^{\frac{1}{2}} .
\end{align*}
For given $\gamma_0 > 0$ the constants $\mathcal{C}$ and $\theta$ 
can be chosen to depend only on $d$, $p$, $M$, $c_{\bullet}$, $c^{\bullet}$ and $\gamma_0$, 
as long as $\Delta_p (\mu) \geq \gamma_0$ and $c$ can be chosen to depend only on $d$, $p$ and $M$.
\end{theorem}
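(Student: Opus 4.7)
The plan is to adapt the Cialdea--Maz'ya test-function argument to the presence of the resolvent parameter $\lambda$ and to balls centred at the boundary, and then to Moser-iterate. Fix $q\in[0,p_0(\mu)-2)$ and $0<\delta<L$, and test the equation against
\[
 v\coloneqq\eta^2\,\lvert u\rvert_{\delta,L}^q\,u,
\]
where $\eta\in\C_c^\infty(\IR^d)$ is a cutoff equal to $1$ on $B(x_0,cr)$, supported in $B(x_0,2r)$, with $\lvert\nabla\eta\rvert\leq C/r$. By \lemref{Lem: Multiplication by smooth functions} and \lemref{Lem: Testfunction}, $v\in\W^{1,2}_D(\Omega)$ with $\supp v\subset\overline{B(x_0,2r)}$, so the hypothesis yields
\[
 \lambda\int\eta^2\lvert u\rvert_{\delta,L}^q\lvert u\rvert^2\;\d x+\int\eta^2\langle\mu\nabla u,\nabla(\lvert u\rvert_{\delta,L}^q u)\rangle\;\d x=-2\int\eta\,\lvert u\rvert_{\delta,L}^q\,\overline{u}\,\langle\mu\nabla u,\nabla\eta\rangle\;\d x.
\]

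The pointwise identity
\[
 \nabla(\lvert u\rvert^q u)=\tfrac{q+2}{2}\,\lvert u\rvert^q\,\e^{\ii\arg u}\,\cJ_{q+2}\bigl(\e^{-\ii\arg u}\nabla u\bigr),
\]
valid on $\{\delta<\lvert u\rvert<L\}$, combined with~\eqref{Eq: p-ellipticity estimate} and~\eqref{Eq: p-sector} applied with $p=q+2<p_0(\mu)$, shows that on this intermediate set the integrand of the principal part equals $\tfrac{q+2}{2}\lvert u\rvert_{\delta,L}^q\langle\mu\xi,\cJ_{q+2}(\xi)\rangle$ with $\lvert\xi\rvert=\lvert\nabla u\rvert$, hence pointwise lies in the closed sector $\overline{\Sec_\omega}$ for some $\omega<\pi/2$ and has real part at least $\tfrac{q+2}{2}\lambda_p\lvert u\rvert_{\delta,L}^q\lvert\nabla u\rvert^2$. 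On the plateau sets $\{\lvert u\rvert\leq\delta\}\cup\{\lvert u\rvert\geq L\}$ the integrand collapses to $\lvert u\rvert_{\delta,L}^q\langle\mu\nabla u,\nabla u\rangle$, whose sector and lower bound follow from Assumption~\ref{Ass: Ellipticity}. Choosing $\theta\in(\pi/2,\pi)$ with $\theta+\omega<\pi$, and for each $\lambda\in\Sec_\theta$ selecting a unimodular $\e^{\ii\tau(\lambda)}$ that rotates both $\lambda$ and $\overline{\Sec_\omega}$ into the open right half-plane with a positive margin, and then taking real parts, makes the mass term contribute $c\lvert\lambda\rvert\int\eta^2\lvert u\rvert_{\delta,L}^q\lvert u\rvert^2$ and the principal term contribute $c\int\eta^2\lvert u\rvert_{\delta,L}^q\lvert\nabla u\rvert^2$. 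The cutoff error is controlled by Young's inequality and partially absorbed to yield the Caccioppoli-type estimate
\[
 \int\eta^2\lvert u\rvert_{\delta,L}^q\lvert\nabla u\rvert^2\;\d x+\lvert\lambda\rvert\int\eta^2\lvert u\rvert_{\delta,L}^q\lvert u\rvert^2\;\d x\leq C\int\lvert\nabla\eta\rvert^2\lvert u\rvert_{\delta,L}^q\lvert u\rvert^2\;\d x.
\]

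Setting $w\coloneqq\lvert u\rvert_{\delta,L}^{q/2}u$ gives $\lvert w\rvert^2=\lvert u\rvert_{\delta,L}^q\lvert u\rvert^2$ and $\lvert\nabla w\rvert^2\leq C_q\lvert u\rvert_{\delta,L}^q\lvert\nabla u\rvert^2$ (checked region-wise), hence $\int\lvert\nabla(\eta w)\rvert^2\leq C\int\lvert\nabla\eta\rvert^2\lvert w\rvert^2$. Applying \lemref{Lem: Local Sobolev embedding} to $\eta w\in\W^{1,2}_D(\Omega)$ with Sobolev exponent $2d/(d-2)$ then yields
\[
 \biggl(\frac{1}{r^d}\int_{\Omega(x_0,cr)}\lvert w\rvert^{2d/(d-2)}\;\d x\biggr)^{(d-2)/(2d)}\leq C\biggl(\frac{1}{r^d}\int_{\Omega(x_0,2r)}\lvert w\rvert^2\;\d x\biggr)^{1/2}.
\]
Passing $\delta\to 0$ and $L\to\infty$ by monotone convergence produces a one-step reverse H\"older bound from $\L^{q+2}$ on $\Omega(x_0,2r)$ to $\L^{(q+2)d/(d-2)}$ on $\Omega(x_0,cr)$. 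A finite Moser bootstrap starting at $q_0+2=2$ (where only Assumption~\ref{Ass: Ellipticity} is required) and advancing by $q_{j+1}+2=\min\bigl((q_j+2)d/(d-2),\,p\bigr)$ keeps every exponent strictly inside $[2,p_0(\mu))$ and, after finitely many geometric ball contractions, delivers the claimed inequality with $\L^2$ on the right and $\L^{pd/(d-2)}$ on the left.

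The principal obstacle is the simultaneous control of the phases: \eqref{Eq: p-sector} describes the $\cJ_p$-twisted form only on the intermediate set, while on the plateaus the integrand reverts to $\langle\mu\nabla u,\nabla u\rangle$ and lies in a possibly different sector. A single $\theta$, and hence a single rotation $\e^{\ii\tau(\lambda)}$, must work uniformly in $\delta$, $L$ and in $\Delta_p(\mu)\geq\gamma_0$, which determines the stated dependence of $\theta$ and $\mathcal{C}$ on $d$, $p$, $M$, $c_\bullet$, $c^\bullet$ and $\gamma_0$. Tracking the radii through the Moser iteration so that the final contraction factor $c\in(0,1)$ depends only on $d$, $p$ and $M$ is then routine geometric-series bookkeeping.
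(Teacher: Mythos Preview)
Your proposal is correct and follows essentially the same route as the paper's proof: test with $v=\eta^2\lvert u\rvert_{\delta,L}^{p-2}u$, split the principal term into plateau and intermediate contributions, exploit that each lies in a sector of half-angle $<\pi/2$ so that together with $\lambda\in\Sec_\theta$ all four summands can be controlled simultaneously, derive a Caccioppoli estimate for $\lvert u\rvert_{\delta,L}^{(p-2)/2}u$, feed this into the local Sobolev inequality \lemref{Lem: Local Sobolev embedding}, pass $\delta\to 0$, $L\to\infty$ via Fatou, and Moser-iterate on shrinking balls. The only cosmetic differences are that the paper writes the intermediate integrand via the real/imaginary decomposition $\Phi,\Psi$ of $\overline{\sgn\varphi}\,\nabla\varphi$ rather than your $\cJ_{q+2}$ identity, and that the paper uses the inequality $\lvert z_1\rvert+\cdots+\lvert z_4\rvert\leq C_{\theta,\omega}\lvert z_1+\cdots+z_4\rvert$ for vectors in non-opposing sectors where you use an equivalent rotation $\e^{\ii\tau(\lambda)}$; both devices serve the same purpose and yield the same dependence of $\theta$ on $c_\bullet,c^\bullet,\gamma_0$.
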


\begin{proof}
Fix $\tau \in \C_c^{\infty} (\IR^d)$ such that 
$\tau|_{B(0 , 1)} = \one$, $0 \leq \tau \leq 1$ and $\supp \tau \subset B(0 , 2)$.
One can arrange that $\| \nabla \tau \|_{\L^{\infty}} \leq 2$.
Let $x_0 \in \R^d$ and  $r > 0$.
Define $\eta \in \C_c^{\infty} (\IR^d)$ by $\eta(x) = \tau( r^{-1}(x - x_0) )$.
Then $\eta \in \C_c^{\infty} (B(x_0 , 2 r))$, $0 \leq \eta \leq 1$,
$\eta|_{B(x_0 , r)} = \one$ and $\| \nabla \eta \|_{\L^{\infty}} \leq 2 / r$.
Let $0 < \delta < L$.
By Lemmas~\ref{Lem: Multiplication by smooth functions} and~\ref{Lem: Testfunction} the testfunction
\begin{align*}
 v \coloneqq \eta^2 \lvert u \rvert_{\delta , L}^{p - 2} u
\end{align*}
satisfies $v \in \W^{1 ,  2}_D (\O)$. Next,
define $\varphi \coloneqq \lvert u \rvert^{\frac{p - 2}{2}} u$ and
$\O_{\delta , L} \coloneqq \{ \delta < \lvert u \rvert < L \}$.
Then $\lvert \varphi \rvert^{\frac{p - 2}{p}} \varphi = \lvert u \rvert^{p - 2} u$.
On the set $\O_{\delta , L}$ one establishes with~\cite[Lem.~7.6]{Gilbarg_Trudinger}
that 
\[ \nabla (\lvert \varphi \rvert^{\frac{p - 2}{p}} \varphi) 
 = \nabla (\lvert u \rvert^{p - 2} u)
 = u \nabla \lvert u \rvert^{p - 2} + \lvert u \rvert^{p - 2} \nabla u 
= u \nabla \lvert u \rvert_{\delta , L}^{p - 2} + \lvert u \rvert_{\delta , L}^{p - 2} \nabla u 
= \nabla (\lvert u \rvert_{\delta , L}^{p - 2} u).
\]
Consequently, $\nabla v = \nabla (\eta^2 \lvert \varphi \rvert^{\frac{p - 2}{p}} \varphi)$.
Using~\cite[Cor.~7.7]{Gilbarg_Trudinger}, the definition of $\varphi$ and the product rule one deduces that 
\begin{align*}
 \mathfrak{t} [u , v] 
&= \int_{\Omega} \langle \mu \nabla u , \nabla (\eta^2 \lvert u \rvert_{\delta , L}^{p - 2} u ) \rangle \; \mathrm{d} x \\
 &= \delta^{p - 2} \int_{ \{ \lvert u \rvert \leq \delta \}} 
    \eta^2 \langle \mu \nabla u , \nabla u \rangle \; \mathrm{d} x 
  + 2 \delta^{p - 2} \int_{ \{ \lvert u \rvert \leq \delta \}} \eta \langle \mu \nabla u , u \nabla \eta  \rangle \; \mathrm{d} x \\
 &\qquad+ L^{p - 2} \int_{ \{ \lvert u \rvert \geq L \}} 
    \eta^2 \langle \mu \nabla u , \nabla u \rangle \; \mathrm{d} x 
  + 2 L^{p - 2} \int_{ \{ \lvert u \rvert \geq L \}} 
    \eta \langle \mu \nabla u , u \nabla \eta \rangle \; \mathrm{d} x \\
 &\qquad+ \int_{\O_{\delta , L}} \langle \mu \nabla (\lvert \varphi \rvert^{- \frac{p - 2}{p}} \varphi) , \nabla (\eta^2 \lvert \varphi \rvert^{\frac{p - 2}{p}} \varphi ) \rangle \; \mathrm{d} x
\end{align*}
Introduce the functions $\Phi \coloneqq \Re(\overline{\sgn\varphi} \nabla \varphi)$ 
and $\Psi \coloneqq \Im (\overline{\sgn\varphi}  \nabla \varphi)$, where
$\sgn z = \frac{z}{|z|}$ if $z \in \IC \setminus \{ 0 \} $ and $\sgn 0 = 0$.
Then
\begin{align}
\label{Eq: Derivative of absolute value}
 \nabla \lvert \varphi \rvert = \Phi.
\end{align}
Relying only on the definitions of $\Phi$ and $\Psi$, the product rule and~\eqref{Eq: Derivative of absolute value} one calculates 
\begin{align*}
 &\int_{\O_{\delta , L}} \langle \mu \nabla (\lvert \varphi \rvert^{- \frac{p - 2}{p}} \varphi) , \nabla (\eta^2 \lvert \varphi \rvert^{\frac{p - 2}{p}} \varphi ) \rangle \; \mathrm{d} x \\
 =& \int_{\O_{\delta , L}} \eta^2 \Big\langle \mu \Big( \frac{2 \Phi}{p} + \ii \Psi \Big) , \Big( \frac{2 \Phi}{p^{\prime}} + \ii \Psi \Big) \Big\rangle \; \mathrm{d} x 
   + \frac{4}{p} \int_{\O_{\delta , L}} \eta \langle \mu \Phi , \lvert \varphi \rvert \nabla \eta \rangle \; \mathrm{d} x \\
 & \hspace{10cm}+ 2 \ii \int_{\O_{\delta , L}} \eta \langle \mu \Psi , \lvert \varphi \rvert \nabla \eta \rangle \; \mathrm{d} x.
\end{align*}
Since $\lambda \int_\Omega u \overline v \; \d x + \mathfrak{t} [u , v] = 0$, 
a combination of the previous calculations and a rearrangement of the terms gives
\begin{align}
\label{Eq: Localized equation with appropriate testfunction}
\begin{aligned}
 & \lambda \int_{\O} \eta^2 \lvert u \rvert_{\delta , L}^{p - 2} \lvert u \rvert^2  \; \mathrm{d} x 
   + \delta^{p - 2} \int_{ \{ \lvert u \rvert \leq \delta \}} \eta^2 \langle \mu \nabla u , \nabla u \rangle \; \mathrm{d} x \\
 &\qquad + L^{p - 2} \int_{ \{ \lvert u \rvert \geq L \}} \eta^2 \langle \mu \nabla u , \nabla u \rangle \; \mathrm{d} x 
+ \int_{\O_{\delta , L}} \eta^2 \Big\langle \mu \Big( \frac{2 \Phi}{p} + \ii \Psi \Big) , \Big( \frac{2 \Phi}{p^{\prime}} + \ii \Psi \Big) \Big\rangle \; \mathrm{d} x \\
 =& - \frac{4}{p} \int_{\O_{\delta , L}} \eta \langle \mu \Phi , \lvert \varphi \rvert \nabla \eta \rangle \; \mathrm{d} x 
    - 2 \ii \int_{\O_{\delta , L}} \eta \langle \mu \Psi , \lvert \varphi \rvert \nabla \eta \rangle \; \mathrm{d} x \\
 &\qquad- 2 \delta^{p - 2} \int_{ \{ \lvert u \rvert \leq \delta \}} \eta \langle \mu \nabla u , u \nabla \eta \rangle \; \mathrm{d} x 
    - 2 L^{p - 2} \int_{ \{ \lvert u \rvert \geq L \}} \eta \langle \mu \nabla u , u \nabla \eta \rangle \; \mathrm{d} x.
\end{aligned}
\end{align}
We next determine the angle $\theta$ for the sector $\Sec_{\theta}$ for~$\lambda$.
Ellipticity of $\mu$ gives
the estimate
$|\Im \langle \mu(x) \xi , \xi \rangle| 
\leq c^{\bullet} |\xi|^2
\leq \frac{c^{\bullet}}{c_{\bullet}} \Re \langle \mu(x) \xi , \xi \rangle$
for all $\xi \in \IC^d$ and almost every $x \in \Omega$.
Define 
\begin{align*}
 \theta_0 \coloneqq \arctan \bigg( \frac{2 c^{\bullet}}{c_{\bullet}} \bigg) 
\in ( 0 , \tfrac{\pi}{2} ) .
\end{align*}
Then $\langle \mu(x) \xi , \xi \rangle \in \overline{\Sec_{\theta_0}}$ 
for all $\xi \in \IC^d$ and almost every $x \in \Omega$.
Hence 
\begin{align*}
 \delta^{p - 2} \int_{ \{ \lvert u \rvert \leq \delta \}} \eta^2 \langle \mu \nabla u , \nabla u \rangle \; \mathrm{d} x \in \overline{ \Sec_{\theta_0} }
\quad \text{and} \quad 
L^{p - 2} \int_{ \{ \lvert u \rvert \geq L \}} \eta^2 \langle \mu \nabla u , \nabla u \rangle \; \mathrm{d} x \in \overline{ \Sec_{\theta_0} }.
\end{align*}
Furthermore, by virtue of~\eqref{Eq: p-sector} there exists an $\omega \in (0 , \frac{\pi}{2} )$,
depending only on $c^{\bullet}$, $p$ and $\gamma_0$, such that
\begin{align*}
 \int_{\O_{\delta , L}} \eta^2 \Big\langle \mu \Big( \frac{2 \Phi}{p} + \ii \Psi \Big) , \Big( \frac{2 \Phi}{p^{\prime}} + \ii \Psi \Big) \Big\rangle \; \mathrm{d} x \in \overline{ \Sec_{\omega} }.
\end{align*}
Choose 
\begin{align*}
 \theta \coloneqq \frac{3 \pi}{4} - \frac{\max\{ \theta_0 , \omega \}}{2},
\end{align*}
then $\theta > \pi / 2$ and
\begin{align*}
 \theta + \max\{ \theta_0 , \omega \} = \frac{3 \pi}{4} + \frac{\max\{ \theta_0 , \omega \}}{2} < \pi.
\end{align*}
Notice that this implies that there exists a $C_{\theta , \omega} > 0$ such that 
\begin{align*}
\lvert z_2 \rvert + \lvert z_3 \rvert + \lvert z_4 \rvert 
\leq \lvert z_1 \rvert + \lvert z_2 \rvert + \lvert z_3 \rvert + \lvert z_4 \rvert 
\leq C_{\theta , \omega} \lvert z_1 + z_2 + z_3 + z_4 \rvert 
\end{align*}
for all $z_1 \in \overline{ \Sec_{\theta} } $ and 
$z_2,z_3,z_4 \in \overline{ \Sec_{\max(\theta_0, \omega)} } $.
Hence if $\lambda \in \Sec_{\theta}$, then
\begin{align*}
 &\quad \delta^{p - 2} \Big\lvert \int_{ \{ \lvert u \rvert \leq \delta \}} \eta^2 \langle \mu \nabla u , \nabla u \rangle \; \mathrm{d} x \Big\rvert \\*
 &\qquad\quad + L^{p - 2} \Big\lvert \int_{ \{ \lvert u \rvert \geq L \}} \eta^2 \langle \mu \nabla u , \nabla u \rangle \; \mathrm{d} x \Big\rvert 
+ \Big\lvert \int_{\O_{\delta , L}} \eta^2 \Big\langle \mu \Big( \frac{2 \Phi}{p} + \ii \Psi \Big) , \Big( \frac{2 \Phi}{p^{\prime}} + \ii \Psi \Big) \Big\rangle \; \mathrm{d} x \Big\rvert \\
 &\leq C_{\theta , \omega} \Big\lvert \lambda \int_{\O} \eta^2 \lvert u \rvert_{\delta , L}^{p - 2} \lvert u \rvert^2  \; \mathrm{d} x 
+ \delta^{p - 2} \int_{ \{ \lvert u \rvert \leq \delta \}} \eta^2 \langle \mu \nabla u , \nabla u \rangle \; \mathrm{d} x \\*
 &\qquad\quad + L^{p - 2} \int_{ \{ \lvert u \rvert \geq L \}} \eta^2 \langle \mu \nabla u , \nabla u \rangle \; \mathrm{d} x 
+ \int_{\O_{\delta , L}} \eta^2 \Big\langle \mu \Big( \frac{2 \Phi}{p} + \ii \Psi \Big) , \Big( \frac{2 \Phi}{p^{\prime}} + \ii \Psi \Big) \Big\rangle \; \mathrm{d} x \Big\rvert .
\end{align*}
Using the ellipticity of $\mu$ together with~\eqref{Eq: p-ellipticity estimate} 
and the condition $\Delta_p(\mu) \geq \gamma_0$
on the left-hand side and~\eqref{Eq: Localized equation with appropriate testfunction}
 on the right-hand side yields
\begin{align}
\label{Eq: Estimate after elliptic estimate}
\begin{aligned}
 &\quad c_{\bullet} \delta^{p - 2} \int_{ \{ \lvert u \rvert \leq \delta \}} \eta^2 \lvert \nabla u \rvert^2 \; \mathrm{d} x 
+ c_{\bullet} L^{p - 2} \int_{ \{ \lvert u \rvert \geq L \}} \eta^2 \lvert \nabla u \rvert^2 \; \mathrm{d} x \\
 &\qquad\quad+ \frac{2 \gamma_0}{p} \int_{\O_{\delta , L}} \eta^2 \lvert \Phi \rvert^2 \; \mathrm{d} x 
+ \frac{p \gamma_0}{2} \int_{\O_{\delta , L}} \eta^2 \lvert \Psi \rvert^2 \; \mathrm{d} x \\
 &\leq c^{\bullet} C_{\theta , \omega} \bigg\{ \frac{4}{p} \int_{\O_{\delta , L}} \lvert \eta \Phi \rvert \lvert \varphi \nabla \eta \rvert \; \mathrm{d} x 
+ 2 \int_{\O_{\delta , L}} \lvert \eta \Psi \rvert \lvert \varphi \nabla \eta \rvert \; \mathrm{d} x \\
 &\qquad\quad+ 2 \delta^{p - 2} \int_{ \{ \lvert u \rvert \leq \delta \}} \lvert \eta \nabla u \rvert \lvert u \nabla \eta \rvert \; \mathrm{d} x + 2 L^{p - 2} \int_{ \{ \lvert u \rvert \geq L \}} \lvert \eta \nabla u \rvert \lvert u \nabla \eta \rvert \; \mathrm{d} x \bigg\}.
\end{aligned}
\end{align}
Use the properties of the cut-off function $\eta$ as well as Young's inequality to derive the estimates
\begin{align*}
 \frac{4}{p} c^{\bullet} C_{\theta , \omega} \int_{\O_{\delta , L}} 
   \lvert \eta \Phi \rvert \lvert \varphi \nabla \eta \rvert \; \mathrm{d} x 
&\leq \frac{\gamma_0}{p} \int_{\O_{\delta , L}} \eta^2 \lvert \Phi \rvert^2 \; \mathrm{d} x + \frac{4 C_{\theta , \omega}^2}{p \gamma_0 r^2} c^{\bullet 2} \int_{\O_{\delta , L} \cap B(x_0 , 2 r)} \lvert \varphi \rvert^2 \; \mathrm{d} x, \\
 2 c^{\bullet} C_{\theta , \omega} \int_{\O_{\delta , L}} \lvert \eta \Psi \rvert \lvert \varphi \nabla \eta \rvert \; \mathrm{d} x 
&\leq \frac{p \gamma_0}{2} \int_{\O_{\delta , L}} \eta^2 \lvert \Psi \rvert^2 \; \mathrm{d} x + \frac{2 C_{\theta , \omega}^2}{p \gamma_0 r^2} c^{\bullet 2} \int_{\O_{\delta , L} \cap B (x_0 , 2 r)} \lvert \varphi \rvert^2 \; \mathrm{d} x,\\
 2 \delta^{p - 2} c^{\bullet} C_{\theta , \omega} \int_{ \{ \lvert u \rvert \leq \delta \}} \lvert \eta \nabla u \rvert \lvert u \nabla \eta \rvert \; \mathrm{d} x 
&\leq \frac{\delta^{p - 2} c_{\bullet}}{2} \int_{ \{ \lvert u \rvert \leq \delta \}} \eta^2 \lvert \nabla u \rvert^2 \; \mathrm{d} x \displaybreak[0]\\
 &\qquad\qquad+ \frac{2 \delta^{p - 2} c^{\bullet 2} C_{\theta , \omega}^2}{c_{\bullet} r^2} \int_{ \{ \lvert u \rvert \leq \delta \} \cap B(x_0 , 2 r)} \lvert u \rvert^2 \; \mathrm{d} x\\
\intertext{and}
 2 L^{p - 2} c^{\bullet} C_{\theta , \omega} \int_{ \{ \lvert u \rvert \geq L \}} \lvert \eta \nabla u \rvert \lvert u \nabla \eta \rvert \; \mathrm{d} x 
&\leq \frac{c_{\bullet} L^{p - 2}}{2} \int_{ \{ \lvert u \rvert \geq L \}} \eta^2 \lvert \nabla u \rvert^2 \; \mathrm{d} x \\
 &\qquad\qquad+ \frac{2 L^{p - 2} c^{\bullet 2} C_{\theta , \omega}^2}{c_{\bullet} r^2} \int_{ \{ \lvert u \rvert \geq L \} \cap B(x_0 , 2 r)} \lvert u \rvert^2 \; \mathrm{d} x.
\end{align*}
Combining these estimates with~\eqref{Eq: Estimate after elliptic estimate}, 
while incorporating the fact that $\eta \equiv 1$ on $B(x_0 , r)$ 
and rearranging yields
\begin{align}
\label{Eq: Cutoff Moser type estimate}
\begin{aligned}
 & c_{\bullet} \delta^{p - 2} \int_{\{ \lvert u \rvert \leq \delta \} \cap B(x_0 , r) } \lvert \nabla u \rvert^2 \; \mathrm{d} x 
 + c_{\bullet} L^{p - 2} \int_{\{ \lvert u \rvert \geq L \} \cap B(x_0 , r)} \lvert \nabla u \rvert^2 \; \mathrm{d} x 
+ \frac{2 \gamma_0}{p} \int_{\O_{\delta , L} \cap B(x_0 , r)} \lvert \Phi \rvert^2 \; \mathrm{d} x  \\
 &\leq \frac{4 C_{\theta , \omega}^2 c^{\bullet 2}}{r^2} \bigg\{ \frac{3}{p \gamma_0} \int_{\O_{\delta , L} \cap B(x_0 , 2 r)} \lvert \varphi \rvert^2 \; \mathrm{d} x \\
 &\hspace*{20mm} \qquad+ \frac{\delta^{p - 2}}{c_{\bullet}} \int_{ \{ \lvert u \rvert \leq \delta \} \cap B(x_0 , 2 r)} \lvert u \rvert^2 \; \mathrm{d} x + \frac{L^{p - 2}}{c_{\bullet}} \int_{ \{ \lvert u \rvert \geq L \} \cap B(x_0 , 2 r)} \lvert u \rvert^2 \; \mathrm{d} x \bigg\}.
\end{aligned}
\end{align}
Next, define analogously to the function $\varphi$ the function $\varphi_{\delta , L} := \lvert u \rvert_{\delta , L}^{\frac{p - 2}{2}} u$.
Then Lemmas~\ref{Lem: Testfunction} and~\ref{Lem: Modulus lemma} imply that 
$\lvert \varphi_{\delta , L} \rvert \in \W^{1 , 2}_D (\Omega)$.
Define $\alpha \coloneqq M^2 \sqrt{d}$ and recall that $2^* = 2d / (d - 2)$.
An application of the local embedding 
Lemma~\ref{Lem: Local Sobolev embedding} 
to $\lvert \varphi_{\delta , L} \rvert$ together with~\eqref{Eq: Derivative of absolute value} gives 
\begin{align*}
 &\quad 
\bigg( \frac{1}{r^d} \int_{\O (x_0 , r / \alpha)} \lvert u \rvert_{\delta , L}^{\frac{2^*}{2} (p - 2)} \lvert u \rvert^{2^*} \; \d x \bigg)^{\frac{1}{2^*}} 
= \bigg( \frac{1}{r^d} \int_{\O (x_0 , r / \alpha)} \lvert \varphi_{\delta , L} \rvert^{2^*} \; \d x \bigg)^{\frac{1}{2^*}} \\
 &\leq C_{\mathrm{Sob}} \bigg\{ r \bigg( \frac{1}{r^d} \int_{\O_{\delta , L} \cap B (x_0 , r)} \lvert \Phi \rvert^2 \; \d x \bigg)^{\frac{1}{2}} + r \bigg( \frac{1}{r^d} \int_{\{ \lvert u \rvert \leq \delta \} \cap B (x_0 , r)} \delta^{p - 2} \lvert \nabla u \rvert^2 \; \d x \bigg)^{\frac{1}{2}} \\
 &\qquad\quad + r \bigg( \frac{1}{r^d} \int_{\{ \lvert u \rvert \geq L \} \cap B (x_0 , r)} L^{p - 2} \lvert \nabla u \rvert^2 \; \d x \bigg)^{\frac{1}{2}} + \alpha \bigg( \frac{1}{r^d} \int_{\O (x_0 , r)} \lvert u \rvert_{\delta , L}^{p - 2} \lvert u \rvert^2 \; \d x \bigg)^{\frac{1}{2}} \bigg\}
,
\end{align*}
where we used that $|\nabla |u| \, | \leq |\nabla u|$. 
Apply~\eqref{Eq: Cutoff Moser type estimate} to the first three terms of the right hand side
and split the integral in the fourth term in three parts.
Then
\begin{align}
\label{Eq: Almost the final inequality}
\begin{aligned}
 &\bigg( \frac{1}{r^d} \int_{\O (x_0 , r / \alpha)} \lvert u \rvert_{\delta , L}^{\frac{2^*}{2} (p - 2)} \lvert u \rvert^{2^*} \; \d x \bigg)^{\frac{1}{2^*}} 
\leq C \bigg\{ \bigg( \frac{1}{r^d} \int_{\O_{\delta , L} \cap B(x_0 , 2 r)} \lvert u \rvert^p \; \d x \bigg)^{\frac{1}{2}} \\
 &\qquad+ \bigg( \frac{1}{r^d} \int_{ \{ \lvert u \rvert \leq \delta \} \cap B(x_0 , 2 r)} \delta^{p - 2} \lvert u \rvert^2 \; \mathrm{d} x \bigg)^{\frac{1}{2}} + \bigg( \frac{1}{r^d} \int_{ \{ \lvert u \rvert \geq L \} \cap B(x_0 , 2 r)} L^{p - 2} \lvert u \rvert^2 \; \mathrm{d} x \bigg)^{\frac{1}{2}} \bigg\},
\end{aligned}
\end{align}
for some constant $C > 0$ that only depends on $d$, $p$, $M$, $c_{\bullet}$, $c^{\bullet}$ and $\gamma_0$.
Now assume for a moment that $u \in \L^p (\O (x_0 , 2 r))$ and use Fatou's lemma to deduce from~\eqref{Eq: Almost the final inequality}
\begin{align*}
\bigg( \frac{1}{r^d} \int_{\O (x_0 , r / \alpha)} \lvert u \rvert^{\frac{2^* p}{2}}\; \d x \bigg)^{\frac{1}{2^*}} &\leq \liminf_{\substack{\delta \to 0 \\ L \to \infty}} \bigg( \frac{1}{r^d} \int_{\O (x_0 , r / \alpha)} \lvert u \rvert_{\delta , L}^{\frac{2^*}{2} (p - 2)} \lvert u \rvert^{2^*} \; \d x \bigg)^{\frac{1}{2^*}} \\
 &\leq C \bigg( \frac{1}{r^d} \int_{\O (x_0 , 2 r)} \lvert u \rvert^p \; \d x \bigg)^{\frac{1}{2}}.
\end{align*}
For further reference, we rephrase this fact with a slight modification of the notation.
We proved that 
\begin{equation} \label{Eq: Mosers estimate}
\bigg( \frac{1}{r^d} \int_{\O (x_0 , R / \alpha)} \lvert u \rvert^{\frac{2^* q}{2}}\; \d x \bigg)^{\frac{2}{2^* q}} 
 \leq C \bigg( \int_{\O (x_0 , 2 R)} \lvert u \rvert^q \; \mathrm{d} x \bigg)^{\frac{1}{q}}
\end{equation}
for all $x_0 \in \overline \Omega$,
$R > 0$ and $q \in [2 , p_0(\mu))$
with $\Delta_q(\mu) \geq \gamma_0$, if $u \in \L^q(\Omega (x_0 , 2 R))$. 
Here $\alpha = M^2 \sqrt{d}$ and $C > 0$ only depends on $d$, $q$, $M$, $c_{\bullet}$, $c^{\bullet}$ and $\gamma_0$.

Now we iterate~\eqref{Eq: Mosers estimate} to obtain the desired weak reverse H\"older estimate. For this purpose, let $x_0 \in \overline \Omega$,
$r \in (0 , \frac{M}{4})$ and $p \in [2 , p_0(\mu))$ with $\Delta_p(\mu) \geq \gamma_0$.
Let $n_0 = \min \{ k \in \IN : p \leq 2 \cdot \big( \frac{2^*}{2} \big)^k \} $
and define $p_0,\ldots,p_{n_0 + 1} \in [2 , \infty)$ by 
\begin{align*}
\left\{ \begin{aligned} 
p_k &\coloneqq 2 \cdot \big( \tfrac{2^*}{2} \big)^k, \qquad \qquad \text{if } k \in \{ 0,\ldots,n_0-1 \} ,  \\
p_{n_0} &= p ,  \\
p_{n_0 + 1} &= \tfrac{2^*}{2} p .
\end{aligned}
\right.
\end{align*}
Since $[2 , \infty) \ni q \mapsto \Delta_q (\mu)$ is decreasing
by~\cite[Cor.~5.16]{Carbonaro_Dragicevic} it follows that 
$\Delta_{p_k}(\mu) \geq \Delta_p(\mu) \geq \gamma_0$ and in particular $\mu$
is $p_k$-elliptic for all $k \in \{ 0,\ldots,n_0 \}$. Since
$u \in \W^{1 , 2}_D(\O) \subset \L^{p_0} (\O)$, we deduce from~\eqref{Eq: Mosers estimate}
with $q = p_0$ and $R = r$ that
\begin{align*}
 \bigg( \frac{1}{r^d} \int_{\O (x_0 , r / \alpha)} \lvert u \rvert^{p_1} \; \mathrm{d} x \bigg)^{\frac{1}{p_1}} &\leq C \bigg( \int_{\O (x_0 , 2 r)} \lvert u \rvert^2 \; \mathrm{d} x \bigg)^{\frac{1}{2}}.
\end{align*}
It follows that $u \in \L^{p_1} (\O (x_0 , r / \alpha))$. 

Next we perform a second iteration step. Another application of~\eqref{Eq: Mosers estimate} applied in the situation where $q = p_1$ and $R = (2 \alpha)^{-1} r$ gives
\begin{align*}
 \bigg( \frac{1}{r^d} \int_{\O (x_0 , (2 \alpha)^{-1} r / \alpha)} \lvert u \rvert^{p_2} \; \mathrm{d} x \bigg)^{\frac{1}{p_2}} \leq C \bigg( \frac{1}{r^d} \int_{\O (x_0 , r / \alpha)} \lvert u \rvert^{p_1} \; \mathrm{d} x \bigg)^{\frac{1}{p_1}}.
\end{align*}
Applying the zeroth iteration step, it follows
\begin{align*}
 \bigg( \frac{1}{r^d} &\int_{\O (x_0 , (2 \alpha)^{-1} r / \alpha)} \lvert u \rvert^{p_2} \; \mathrm{d} x \bigg)^{\frac{1}{p_2}} \leq C \bigg( \frac{1}{r^d} \int_{\O (x_0 , 2 r)} \lvert u \rvert^2 \; \mathrm{d} x \bigg)^{\frac{1}{2}},
\end{align*}
where $C$ is the product of the constants in the respective inequalities. In particular, it follows that $u \in \L^{p_2} (\O (x_0 , \alpha^{-2} 2^{-1} r))$. Continuing this procedure iteratively yields
\begin{align*}
 \bigg( \frac{1}{r^d} &\int_{\O (x_0 , (2 \alpha)^{- n_0} r / \alpha)} \lvert u \rvert^{p_{n_0 + 1}} \; \mathrm{d} x \bigg)^{\frac{1}{p_{n_0 + 1}}} 
\leq C' \bigg( \frac{1}{r^d} \int_{\O (x_0 , 2 r)} \lvert u \rvert^2 \; \mathrm{d} x \bigg)^{\frac{1}{2}}.
\end{align*}
Altogether, this proves the validity of the desired weak reverse H\"older estimates.
\end{proof}

Following a classical idea, which can be found for example in the book
of Giaquinta~\cite[p.~119]{Giaquinta}, we establish a weak reverse
H\"older estimate for $\nabla u$. Its proof essentially relies on a subsequent
application of Caccioppoli's inequality, Sobolev's embedding and
Poincar\'e's inequality. We start with an adapted version of
Caccioppoli's inequality for mixed boundary conditions and the resolvent
problem.

\begin{lemma}
\label{Lem: Caccioppoli}
Let $\Omega \subset \IR^d$ be a bounded open set, $D \subset \partial \Omega$ be closed, and let
\begin{align*}
 0 < \theta < \pi - \arctan\Big( \frac{2 c^{\bullet}}{c_{\bullet}} \Big) \eqqcolon \pi - \theta_0.
\end{align*}
Then there exists a constant $C_{\theta_0 , \theta} > 0$ such that for all $\lambda \in \Sec_{\theta}$, 
$x_0 \in \overline{\Omega}$, $r > 0$ and $u \in \W^{1 , 2}_D (\O)$ that satisfy
\begin{align*}
 \lambda \int_{\O} u \overline{v} \; \mathrm{d} x + \mathfrak{t} [u , v] = 0
\end{align*}
for all $v \in \W^{1 , 2}_D (\O)$ with $\supp(v) \subset \overline{B(x_0 , 2 r)}$ 
the following is valid.
\begin{tabel}
 \item \label{Lem: Caccioppoli-1}
If $B(x_0 , 2 r) \cap D = \emptyset$, then for all $c \in \IC$
\begin{align*}
  \lvert \lambda \rvert &\int_{\O (x_0 , r)} \lvert u \rvert^2 \; \d x + \frac{c_{\bullet}}{2} \int_{\O (x_0 , r)} \lvert \nabla u \rvert^2 \; \d x \\
 &\leq C_{\theta_0 , \theta} \bigg\{ \lvert \lambda \rvert \lvert c \rvert \int_{\O (x_0 , 2 r)} \lvert u \rvert \; \d x + \frac{2 d^2 C_{\theta_0 , \theta} C_d^2 c^{\bullet 2}}{c_{\bullet}} \frac{1}{r^2} \int_{\O (x_0 , 2 r)} \lvert u + c \rvert^2 \; \d x\bigg\}.
\end{align*}
 \item \label{Lem: Caccioppoli-2}
If $B(x_0 , 2 r) \cap D \neq \emptyset$, then
\begin{align*}
 \lvert \lambda \rvert \int_{\O (x_0 , r)} \lvert u \rvert^2 \; \d x + \frac{c_{\bullet}}{2} \int_{\O (x_0 , r)} \lvert \nabla u \rvert^2 \; \d x \leq \frac{2 d^2 C_{\theta_0 , \theta}^2 C_d^2 c^{\bullet 2}}{c_{\bullet}} \frac{1}{r^2} \int_{\O (x_0 , 2 r)} \lvert u \rvert^2 \; \d x.
\end{align*}
\end{tabel}
\end{lemma}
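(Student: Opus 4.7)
The plan is to mimic the classical Caccioppoli argument, with the key new input being the angular separation between the sectors containing $\lambda$ and $\ta[u,u]$. I would choose a standard cut-off $\eta \in \C_c^\infty(\IR^d)$ with $\eta = 1$ on $B(x_0,r)$, $\supp \eta \subset B(x_0,2r)$, $0 \leq \eta \leq 1$ and $\|\nabla \eta\|_\infty \leq C_d/r$, and observe that \assuref{Ass: Ellipticity} places all values $\langle \mu(x)\xi,\xi\rangle$ in the closed sector $\overline{\Sec_{\theta_0}}$ while $\lambda \in \Sec_{\theta}$ with $\theta + \theta_0 < \pi$ by hypothesis. The elementary geometric fact that $|z_1|+|z_2| \leq C_{\theta_0,\theta}|z_1+z_2|$ whenever $z_1 \in \overline{\Sec_\theta}$ and $z_2 \in \overline{\Sec_{\theta_0}}$ will then provide simultaneous control of the two sector-valued contributions arising on the left of the tested equation; this is the complex analogue of the real-part argument available in the self-adjoint case.

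For part~\ref{Lem: Caccioppoli-2} the admissible test function is $v \coloneqq \eta^2 u$, which belongs to $\W^{1,2}_D(\O)$ by \lemref{Lem: Multiplication by smooth functions} and has support in $\overline{B(x_0,2r)}$. Inserting $v$ into the resolvent equation and expanding $\nabla v$ via the product rule yields
\[
\lambda \int_\O \eta^2 |u|^2 \, \d x + \int_\O \eta^2 \langle \mu \nabla u, \nabla u\rangle \, \d x
= -2 \int_\O \eta \bar u \langle \mu \nabla u, \nabla \eta\rangle \, \d x .
\]
Applying the angular inequality to the two sector-valued summands on the left (and using ellipticity to bound the $\mu$-quadratic term from below by $c_\bullet \int \eta^2 |\nabla u|^2$), estimating the right-hand side by $2 c^\bullet \int \eta |u| |\nabla u| |\nabla \eta|$, and finally absorbing the $|\nabla u|$ factor via Young's inequality with weight $c_\bullet/2$ produces the stated estimate through $|\nabla \eta|\leq C_d/r$.

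For part~\ref{Lem: Caccioppoli-1}, I would instead test with $v \coloneqq \eta^2 (u+c)$. Because $B(x_0,2r)\cap D = \emptyset$, the constant piece $c\eta^2$ has support disjoint from $D$ and therefore lies in $\C_D^\infty(\O)$; combined with $\eta^2 u \in \W^{1,2}_D(\O)$ this makes $v$ admissible. Expansion then produces the additional summand $\lambda \bar c \int \eta^2 u$ on the left, which after being moved to the right accounts for the $|\lambda||c|\int|u|$ term of the claim, while the surface-type term now carries the factor $\overline{u+c}$ in place of $\bar u$; the rest of the argument runs exactly as in part~\ref{Lem: Caccioppoli-2} and yields the $r^{-2}\int|u+c|^2$ contribution.

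The only delicate point is the sectorial step: the classical Caccioppoli proof proceeds by taking real parts, which is unavailable here because complex $\mu$ couples the resolvent and gradient terms through imaginary parts as well. The angular opening hypothesis $\theta < \pi - \theta_0$ is precisely what is needed to replace the real-part argument by the geometric sum-of-sectors bound; once this is in place, the rest is routine Young absorption.
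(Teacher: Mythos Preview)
Your proposal is correct and follows essentially the same approach as the paper: choose a cut-off $\eta$, test with $v=\eta^2(u+c)$ (taking $c=0$ for part~\ref{Lem: Caccioppoli-2}), use the angular inequality $|z_1|+|z_2|\le C_{\theta_0,\theta}|z_1+z_2|$ for $z_1\in\overline{\Sec_\theta}$, $z_2\in\overline{\Sec_{\theta_0}}$ to control both left-hand contributions, and finish by Young's absorption of the cross term. The admissibility argument for $v$ and the identification of the sectorial step as the replacement for the real-part trick are exactly as in the paper.
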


\begin{proof}
`\ref{Lem: Caccioppoli-1}'.
Since $\theta + \theta_0 < \pi$ there exists a $C_{\theta_0 , \theta} > 0$ such that 
$|z_1| + |z_2| \leq C_{\theta_0 , \theta} |z_1 + z_2|$ for all 
$z_1 \in \overline{ \Sec_{\theta} } $ and $z_2 \in \overline{ \Sec_{\theta_0} } $.
Let $\eta \in \C_c^{\infty} (B(x_0 , 2 r))$ be such that $0 \leq \eta \leq 1$,
$\eta|_{B(x_0 , r)} = \one$ and $\| \nabla \eta \|_{\L^{\infty}} \leq 2 / r$, cf.\@ the proof of Theorem~\ref{Thm: Reverse Holder}.
Let $c \in \IC$ and define $v \coloneqq \eta^2 (u + c) $.
Then $v \in \W^{1 , 2}_D (\O)$ since $B(x_0 , 2 r) \cap D = \emptyset$.
Hence
\begin{align*}
 0 &= \lambda \int_{\Omega} \eta^2 u \overline{(u + c)} \; \d x 
+ \int_{\Omega} \langle \mu \nabla u , \nabla \big( \eta^2 (u + c) \big) \rangle \; \d x \\
 &= \lambda \int_{\Omega} \eta^2 \lvert u \rvert^2 \; \d x 
+ \lambda \overline{c} \int_{\Omega} \eta^2 u \; \d x 
+ \int_{\Omega} \langle \eta^2 \mu \nabla u , \nabla u \rangle \; \d x 
+ 2 \int_{\Omega} \eta \langle \mu \nabla u , (u + c) \nabla \eta \rangle \; \d x.
\end{align*}
The ellipticity of $\mu$ implies that
\begin{align*}
 \int_{\Omega} \eta^2 \langle \mu \nabla u , \nabla u \rangle \; \d x  
\in \overline{ \Sec_{\theta_0}}.
\end{align*}
Therefore
\[ \lvert \lambda \rvert \int_{\O} \eta^2 \lvert u \rvert^2 \; \d x 
+ c_{\bullet} \int_{\O} \eta^2 \lvert \nabla u \rvert^2 \; \d x
 \leq C_{\theta_0 , \theta} \bigg\{ \lvert \lambda \rvert \lvert c \rvert \int_{\O} \eta^2 \lvert u \rvert \; \d x 
+ 2 d c^{\bullet} \int_{\O} \eta \lvert \nabla u \rvert \lvert \nabla \eta \rvert \lvert u + c \rvert \; \d x \bigg\}.
\]
Consequently, by Young's inequality
\begin{align*}
 2 d C_{\theta_0 , \theta} c^{\bullet} \int_{\O} \eta \lvert \nabla u \rvert \lvert \nabla \eta \rvert \lvert u + c \rvert \; \d x 
\leq \frac{c_{\bullet}}{2} \int_{\O} \eta^2 \lvert \nabla u \rvert^2 \; \d x + \frac{2 d^2 C_{\theta_0 , \theta}^2 c^{\bullet 2}}{c_{\bullet}} \int_{\O} \lvert \nabla \eta \rvert^2 \lvert u + c \rvert^2 \; \d x
\end{align*}
and the term involving the gradient of $u$ can be absorbed into the left-hand side. The properties of $\eta$ then yield
\begin{align*}
 \lvert \lambda \rvert \int_{\O (x_0 , r)} &\lvert u \rvert^2 \; \d x + \frac{c_{\bullet}}{2} \int_{\O (x_0 , r)} \lvert \nabla u \rvert^2 \; \d x \\
 &\leq C_{\theta_0 , \theta} \bigg\{ \lvert \lambda \rvert \lvert c \rvert \int_{\O (x_0 , 2 r)} \lvert u \rvert \; \d x + \frac{2 d^2 C_{\theta_0 , \theta} C_d^2 c^{\bullet 2}}{c_{\bullet}} \frac{1}{r^2} \int_{\O (x_0 , 2 r)} \lvert u + c \rvert^2 \; \d x\bigg\}
\end{align*}
as required.

`\ref{Lem: Caccioppoli-2}'.
The argument is almost as above, but now one has to choose $c = 0$, i.e., $v = \eta^2 u$.
\end{proof}

We continue by combining Lemma~\ref{Lem: Caccioppoli} with the local Sobolev embedding (Lemma~\ref{Lem: Local Sobolev embedding}) and the local Poincar\'e inequality (Lemma~\ref{Lem: Local Poincare}).

\begin{lemma}
\label{Lem: Higher integrability of gradient}
Let $\Omega \subset \IR^d$ be open and bounded, where $d \geq 3$.
Let $D \subset \partial \Omega$ be closed and set $N = \partial \Omega \setminus D$.
Adopt Assumptions~N,~$\Omega^c$ and~D, 
and let $\mu \in \L^{\infty} (\Omega ; \IC^{d \times d})$ be 
subject to Assumption~\ref{Ass: Ellipticity}. 
Let $0 < \theta < \pi$ be chosen as in Lemma~\ref{Lem: Caccioppoli}. 
Then there exists a
constant $C > 0$ such that for all $\lambda \in \Sec_{\theta}$,
 $x_0 \in \overline{\Omega}$,
$0 < r < \min\{ s_0 / 4 , r_0 / (4 \iota) , r_0 / 2 , 1 / (16 M \sqrt{d}) \}$ and 
$u \in \W^{1 , 2}_D (\O)$ satisfying
\begin{align*}
 \lambda \int_{\O} u \overline{v} \; \mathrm{d} x + \mathfrak{t} [u , v] = 0
\end{align*}
for all $v \in \W^{1 , 2}_D (\O)$ with $\supp(v) \subset \overline{B(x_0 , 2 r)}$ 
it follows that 
\begin{align*}
 \bigg( \frac{1}{r^d} \int_{\O (x_0 , r)} \big\{ \lvert \lambda u \rvert &+ \lvert \lambda \rvert^{\frac{1}{2}} \lvert \nabla u \rvert \big\}^2 \; \d x \bigg)^{\frac{1}{2}} \leq C \bigg( \frac{1}{r^d} \int_{\O (x_0 , 8 M^2 \sqrt{d} r)} \big\{ \lvert \lambda u \rvert + \lvert \lambda \rvert^{\frac{1}{2}} \lvert \nabla u \rvert \big\}^{2_*} \; \d x \bigg)^{\frac{1}{2_*}},
\end{align*}
where $2_* \coloneqq \frac{2d}{d + 2}$.
\end{lemma}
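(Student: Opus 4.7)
The plan is to multiply the Caccioppoli estimate of Lemma~\ref{Lem: Caccioppoli} through by $|\lambda|$, so that its left-hand side becomes
\[
|\lambda|^2 \|u\|_{\L^2(\Omega(x_0,r))}^2 + |\lambda| \|\nabla u\|_{\L^2(\Omega(x_0,r))}^2 \sim \int_{\Omega(x_0,r)} (|\lambda u| + |\lambda|^{1/2}|\nabla u|)^2 \; \d x.
\]
The task then reduces to bounding the resulting right-hand side by an $\L^{2_*}$-average of $|\lambda u|$ and $|\lambda|^{1/2}|\nabla u|$ on a slightly larger set. This is exactly what the Sobolev embedding $\W^{1,2_*} \hookrightarrow \L^2$ (with the critical identity $(2_*)^* = 2$) is designed for, and the $\lambda$-powers match because $(|\lambda|^{1/2}|\nabla u|)^{2_*}$ carries $|\lambda|^{2_*/2}$, which when taken to the power $2/2_*$ produces exactly~$|\lambda|$.

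I would distinguish the two cases of Lemma~\ref{Lem: Caccioppoli}. In the case $B(x_0, 2r) \cap D = \emptyset$, I would apply Lemma~\ref{Lem: Caccioppoli}\ref{Lem: Caccioppoli-1} with $c$ chosen as the negative mean value of $u$ over $\Omega(x_0, 2r)$ when $B(x_0, 2r) \subset \Omega$, and as the negative mean value of the reflective extension $Eu$ from Remark~\ref{Rem: Reflection at Lipschitz boundary} when $x_0$ lies near $\overline N$. The classical Sobolev-Poincar\'e inequality on a Euclidean ball, combined with the $\L^{2_*}$-boundedness of $E$, then yields
\[
\frac{|\lambda|}{r^{d+2}} \int_{\Omega(x_0, 2r)} |u + c|^2 \; \d x \lesssim \Bigl( r^{-d} \int_{\Omega(x_0, 2\alpha r)} (|\lambda|^{1/2}|\nabla u|)^{2_*} \; \d x \Bigr)^{2/2_*}.
\]
For the remaining Caccioppoli term $|\lambda||c|\int_{\Omega(x_0,2r)}|u| \; \d x$, I would use $|c| \lesssim r^{-d}\int|u|$ together with H\"older's inequality, $\bigl(\int|u|\bigr)^2 \lesssim r^{2d(1-1/2_*)}\bigl(\int|u|^{2_*}\bigr)^{2/2_*}$, and the multiplication by $|\lambda|^2/r^d$ then produces a $(r^{-d}\int|\lambda u|^{2_*})^{2/2_*}$ bound.

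In the case $B(x_0, 2r) \cap D \neq \emptyset$, Lemma~\ref{Lem: Caccioppoli}\ref{Lem: Caccioppoli-2} leaves only $r^{-2}\int_{\Omega(x_0,2r)}|u|^2$ to control. Here I would apply Lemma~\ref{Lem: Local Sobolev embedding} with $p = 2_*$ and $q = 2$, which is admissible since $1/2_* - 1/2 = 1/d$, and then bound the resulting zeroth-order $\L^{2_*}$-term by Lemma~\ref{Lem: Local Poincare} applied in its $\L^{2_*}$-version. The latter is valid because $\W^{1,2}_D(\Omega) \subset \W^{1,2_*}_D(\Omega)$ on a bounded set and $B(x_0,r) \cap D \neq \emptyset$. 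This produces the factor $\beta = 4M^2\sqrt{d}$ in the enlargement, so all $\L^{2_*}$-averages can be taken on $\Omega(x_0, 2\beta r) = \Omega(x_0, 8M^2\sqrt{d}r)$.

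The main technical obstacle is the Neumann subcase of the first alternative: the classical Sobolev-Poincar\'e inequality lives on Euclidean balls, so one must carefully insert the reflection extension of Remark~\ref{Rem: Reflection at Lipschitz boundary} and then return to $\Omega$ via the operator bounds for $E$. The second subtlety is keeping track of the three different enlargement factors (the factor $2$ from Caccioppoli, $\alpha = M^2\sqrt{d}$ from the Sobolev/extension step, and $\beta = 4M^2\sqrt{d}$ from Lemma~\ref{Lem: Local Poincare} in the Dirichlet case), whose product governs the final radius $8M^2\sqrt{d}r$ appearing on the right-hand side of the claim.
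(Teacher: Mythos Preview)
Your proposal is correct and follows essentially the same approach as the paper: the same three-case split (interior ball, boundary ball away from $D$ using the reflection $E$, boundary ball meeting $D$), the same choice of subtracted constant in each Neumann subcase, and the same chain Caccioppoli $\to$ Sobolev--Poincar\'e (resp.\ Lemma~\ref{Lem: Local Sobolev embedding} $+$ Lemma~\ref{Lem: Local Poincare}) to pass from $\L^2$ to $\L^{2_*}$. One small slip: in the Dirichlet case the hypothesis is $B(x_0,2r)\cap D\neq\emptyset$, not $B(x_0,r)\cap D\neq\emptyset$, so Lemma~\ref{Lem: Local Poincare} is applied at radius $2r$, which is exactly what produces the final enlargement $2r\cdot\beta = 8M^2\sqrt{d}\,r$.
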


\begin{proof}
If $B(x_0 , 2 r) \subset \O$, then Lemma~\ref{Lem: Caccioppoli} with 
$c \coloneqq - u_{B(x_0 , 2 r)}$ gives
\begin{align*}
 \lvert \lambda \rvert \int_{B(x_0 , r)} &\lvert u \rvert^2 \; \d x + \int_{B(x_0 , r)} \lvert \nabla u \rvert^2 \; \d x \\
 &\leq C \bigg\{ \lvert \lambda \rvert \lvert u_{B(x_0 , 2 r)} \rvert \int_{B(x_0 , 2 r)} \lvert u \rvert \; \d x + \frac{1}{r^2} \int_{B(x_0 , 2 r)} \lvert u - u_{B(x_0 , 2 r)} \rvert^2 \; \d x\bigg\}
\end{align*}
for a suitable constant $C > 0$. The first term on the right-hand side is controlled by 
H\"older's inequality and the second term is controlled by means of a Sobolev--Poincar\'e inequality. 
This altogether yields
\begin{align*}
 \lvert \lambda \rvert \int_{B(x_0 , r)} &\lvert u \rvert^2 \; \d x + \int_{B(x_0 , r)} \lvert \nabla u \rvert^2 \; \d x \\
  &\leq C' r^d \bigg\{\lvert \lambda \rvert \bigg( \frac{1}{r^d} \int_{B(x_0 , 2 r)} \lvert u \rvert^{2_*} \; \d x \bigg)^{\frac{2}{2_*}} + \bigg( \frac{1}{r^d} \int_{B(x_0 , 2 r)} \lvert \nabla u \rvert^{2_*} \; \d x \bigg)^{\frac{2}{2_*}}\bigg\}.
\end{align*}
Now, multiplying by $\lvert \lambda \rvert$, dividing by $r^d$, 
and taking the square root of the inequality leads us to
\begin{align*}
 \bigg( \frac{1}{r^d} \int_{B(x_0 , r)} &\lvert \lambda u \rvert^2 + \big( \lvert \lambda \rvert^{\frac{1}{2}} \lvert \nabla u \rvert \big)^2 \; \d x \bigg)^{\frac{1}{2}} \\
  &\leq C \bigg\{ \bigg( \frac{1}{r^d} \int_{B(x_0 , 2 r)} \lvert \lambda u \rvert^{2_*} \; \d x \bigg)^{\frac{2}{2_*}} + \bigg( \frac{1}{r^d} \int_{B(x_0 , 2 r)} \big( \lvert \lambda \rvert^{\frac{1}{2}} \lvert \nabla u \rvert \big)^{2_*} \; \d x \bigg)^{\frac{2}{2_*}}\bigg\}^{\frac{1}{2}}.
\end{align*}
Finally, the equivalence of norms in $\IR^2$, in particular the estimates
\begin{align*}
 (a + b)^2 \leq 2(a^2 + b^2), \quad a^{\frac{2}{2_*}} + b^{\frac{2}{2_*}} \leq (a + b)^{\frac{2}{2_*}}
 \quad \text{and} \quad a^{2_*} + b^{2_*} \leq (a + b)^{2_*}
\end{align*}
for all $a,b \in [0,\infty)$ yield the desired estimate for interior balls. 

If $x_0 \in \partial \Omega$ and $B(x_0 , 2 r) \cap D = \emptyset$, then
we do almost the same, but we take $c \coloneqq - (E u)_{B(x_0 , 2 r)}$,
where $E$ denotes the local reflection operator from Remark~\ref{Rem: Reflection at Lipschitz boundary}.
In this case, Lemma~\ref{Lem: Caccioppoli} gives
\begin{align*}
 \lvert \lambda \rvert &\int_{\O (x_0 , r)} \lvert u \rvert^2 \; \d x + \int_{\O (x_0 , r)} \lvert \nabla u \rvert^2 \; \d x \\
 &\leq C \bigg\{ \lvert \lambda \rvert \lvert (Eu)_{B(x_0 , 2 r)} \rvert \int_{\O (x_0 , 2 r)} \lvert u \rvert \; \d x + \frac{1}{r^2} \int_{B(x_0 , 2 r)} \lvert E u - (E u)_{B(x_0 , 2 r)} \rvert^2 \; \d x\bigg\} \\
 &\leq C r^d \bigg\{ \lvert \lambda \rvert \lvert (Eu)_{B(x_0 , 2 r)} \rvert \frac{1}{r^d} \int_{\O (x_0 , 2 r)} \lvert u \rvert \; \d x + \bigg( \frac{1}{r^d} \int_{B(x_0 , 2 r)} \lvert \nabla (E u) \rvert^{2_*} \; \d x \bigg)^{\frac{2}{2_*}} \bigg\}.
\end{align*}
The estimates given in Remark~\ref{Rem: Reflection at Lipschitz boundary} imply
\begin{align*}
 \bigg( \frac{1}{r^d} \int_{B(x_0 , 2 r)} \lvert \nabla (E u) \rvert^{2_*} \; \d x \bigg)^{\frac{2}{2_*}} 
\leq C' \bigg( \frac{1}{r^d} \int_{\O (x_0 , 2 M^2 \sqrt{d} r)} \lvert \nabla u \rvert^{2_*} \; \d x \bigg)^{\frac{2}{2_*}},
\end{align*}
where $C' > 0$ only depends on $d$ and $M$ and
\begin{align*}
 \lvert (Eu)_{B(x_0 , 2 r)} \rvert \leq \frac{C''}{r^d} \int_{\O (x_0 , 2 M^2 \sqrt{d} r)} \lvert u \rvert \; \d x.
\end{align*}
Using once again the H\"older inequality we arrive at
\begin{align*}
 \lvert \lambda \rvert &\int_{\O (x_0 , r)} \lvert u \rvert^2 \; \d x + \int_{\O (x_0 , r)} \lvert \nabla u \rvert^2 \; \d x \\
 &\leq C''' r^d \bigg\{ \lvert \lambda \rvert \bigg( \frac{1}{r^d} \int_{\O (x_0 , 2 M^2 \sqrt{d} r)} \lvert u \rvert^{2_*} \; \d x \bigg)^{\frac{2}{2_*}} + \bigg( \frac{1}{r^d} \int_{\O (x_0 , 2 M^2 \sqrt{d} r)} \lvert \nabla u \rvert^{2_*} \; \d x \bigg)^{\frac{2}{2_*}} \bigg\}.
\end{align*}
Multiplying again by $\lvert \lambda \rvert$,
dividing by $r^d$, taking the square root and employing the equivalence
of norms in $\IR^2$ as above establishes the desired inequality.

If $x_0 \in \partial \Omega$ and $B(x_0 , 2 r) \cap D \neq \emptyset$, then Lemma~\ref{Lem: Caccioppoli} implies
\begin{align*}
 \lvert \lambda \rvert \int_{\O (x_0 , r)} \lvert u \rvert^2 \; \d x + \int_{\O (x_0 , r)} \lvert \nabla u \rvert^2 \; \d x \leq  \frac{C}{r^2} \int_{\O (x_0 , 2 r)} \lvert u \rvert^2 \; \d x.
\end{align*}
Now Lemma~\ref{Lem: Local Sobolev embedding} guarantees that
\[ \frac{1}{r^2} \int_{\O (x_0 , 2 r)} \lvert u \rvert^2 \; \d x
 \leq C r^d \bigg\{ \bigg( \frac{1}{r^d} \int_{\O (x_0 , 2 M^2 \sqrt{d} r)} \lvert \nabla u \rvert^{2_*} \; \d x \bigg)^{\frac{1}{2_*}} 
     + \frac{1}{r} \bigg( \frac{1}{r^d} \int_{\O (x_0 , 2 r)} \lvert u \rvert^{2_*} \; \d x \bigg)^{\frac{1}{2_*}} \bigg\}^2.
\]
An application of Lemma~\ref{Lem: Local Poincare} with $p = 2_*$  
yields the desired inequality.
\end{proof}

\section{From weak reverse H\"older estimates to $\L^p$-estimates}
\label{Sec: From weak reverse Holder estimates to operator bounds}

In this section we provide the proofs of Theorems~\ref{Thm: Analyticity} 
and~\ref{Thm: Gradient estimates for resolvent}. 
The proofs fundamentally base on the following $\L^p$-extrapolation theorem of 
Shen~\cite[Thm.~3.3]{Shen} which was initially proved on bounded Lipschitz domains 
and generalised in~\cite[Thm.~4.1]{Tolksdorf} to general bounded measurable sets.

\begin{theorem}
\label{Thm: Lp-extrapolation theorem}
Let $\Omega \subset \IR^d$ be bounded and Lebesgue-measurable, let $k , m \in \IN$,
$\mathcal{M} > 0$, and let $T \in \Lop(\L^2 (\Omega ; \IC^k) , \L^2 (\Omega ; \IC^m))$ with 
$\| T \|_{\Lop(\L^2 (\Omega ; \IC^k) , \L^2 (\Omega ; \IC^m))} \leq \mathcal{M}$.
Further let $p > 2$, $R_0 > 0$, $\alpha_2 > \alpha_1 > 1$
and $\mathcal{C} > 0$.
Suppose that 
\[
 \bigg( \frac{1}{r^d} \int_{\Omega (x_0 , r)} \lvert T f \rvert^p \; \d x \bigg)^{1 / p} 
\leq \mathcal{C} \bigg( \frac{1}{r^d} \int_{\Omega (x_0 , \alpha_1 r)} \lvert T f \rvert^2 \; \d x \bigg)^{1 / 2}
\]
 for all $0 < r < R_0$ and $x_0$ that either satisfy $x_0 \in \partial \Omega$ or 
$B(x_0 , \alpha_2 r) \subset \Omega$ and for all compactly supported $f \in \L^{\infty}(\Omega ; \IC^k)$ with 
$f = 0$ on $\Omega(x_0 , \alpha_2 r)$.

Then $T$ restricts for all $2 < q < p$ to a bounded operator in 
$\Lop(\L^q (\Omega ; \IC^k) , \L^q (\Omega ; \IC^m))$ with an operator norm that is bounded by a constant depending on 
$d$, $p$, $q$, $\alpha_1$, $\alpha_2$, $\mathcal{C}$, $\mathcal{M}$, $R_0$ and $\diam(\Omega)$.
\end{theorem}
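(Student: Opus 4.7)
The plan is to follow the Calderón--Zygmund / good-$\lambda$ strategy introduced by Shen. The hypothesis gives, at the local scale, a weak reverse H\"older inequality for $Tf$ on interior and boundary balls whenever the input $f$ vanishes on a slightly larger set. The standard machinery of dyadic/Vitali covers adapted to $\Omega$ converts this self-improving information into the sought $\L^q$-bound for $q \in (2,p)$.

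First I would introduce a localized centred maximal operator
\[
 \Lop_\Omega (g)(x) \coloneqq \sup_{0 < r < R_0/\alpha_2} \frac{1}{r^d} \int_{\Omega(x,r)} \lvert g \rvert \; \d y \qquad (x \in \Omega),
\]
which is bounded on $\L^s(\Omega)$ for all $s \in (1,\infty]$ by Vitali's covering lemma (the diameter of $\Omega$ and $R_0$ enter the constants). Fix $f \in \L^\infty(\Omega;\IC^k)$ with compact support and $\lambda > 0$. Applying a Calder\'on--Zygmund stopping-time argument to $\Lop_\Omega(\lvert Tf \rvert^2)$ at the level $A^2 \lambda^2$ (with $A > 1$ to be chosen) produces a family of maximal ``bad'' balls $\{B(x_i, r_i)\}_{i \in I}$ with bounded overlap such that $\Lop_\Omega(\lvert Tf\rvert^2) > A^2 \lambda^2$ on their union while $(\lvert Tf\rvert^2)_{\Omega(x_i, 3 r_i)} \leq A^2 \lambda^2$ at the parent scale. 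Where $B(x_i, \alpha_2 r_i) \not\subset \Omega$, I would slide the centre to the nearest boundary point (doubling the radius) so that the hypothesis applies at each $x_i$ in its relevant form.

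On each selected ball, split $f = f_i^{\rm loc} + f_i^{\rm far}$ with $f_i^{\rm loc} = f \chi_{\Omega(x_i, \alpha_2 r_i)}$. The $\L^2$-boundedness of $T$, combined with Chebyshev's inequality, controls the contribution of $T f_i^{\rm loc}$ to $\{\lvert Tf \rvert > A\lambda\} \cap \Omega(x_i, r_i)$ by
$C \mathcal{M}^2 A^{-2} \lambda^{-2} \|f_i^{\rm loc}\|_{\L^2}^2$, which by the definition of the maximal function is $\lesssim A^{-2}(\Lop_\Omega(\lvert f \rvert^2))(x_i) \cdot \lvert \Omega(x_i, r_i) \rvert \lambda^{-2}$. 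Since $f_i^{\rm far}$ vanishes on $\Omega(x_i, \alpha_2 r_i)$, the hypothesis yields
\[
 \bigg(\frac{1}{r_i^d} \int_{\Omega(x_i, r_i)} \lvert T f_i^{\rm far} \rvert^p \; \d x\bigg)^{1/p} \leq \mathcal{C} \bigg(\frac{1}{r_i^d} \int_{\Omega(x_i, \alpha_1 r_i)} \lvert T f_i^{\rm far} \rvert^2 \; \d x\bigg)^{1/2},
\]
and the right-hand side is bounded by the stopping value $A \lambda$ plus the $\L^2$-contribution of $T f_i^{\rm loc}$, which was already handled. Chebyshev on $\Omega(x_i, r_i)$ with the $\L^p$-norm thus gives a contribution of order $(A\lambda)^{-p}$, i.e.\ with a favourable power of $\lambda$.

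Combining these two estimates on every $B(x_i, r_i)$ and summing over $i \in I$ produces the good-$\lambda$ inequality
\[
 \bigabs{\{\Lop_\Omega(\lvert Tf\rvert^2) > A^2 \lambda^2, \ \Lop_\Omega(\lvert f \rvert^2) \leq \gamma^2 \lambda^2\}} \leq C (A^{-p} + \gamma^{2} A^{-2}) \bigabs{\{\Lop_\Omega(\lvert Tf \rvert^2) > \lambda^2\}},
\]
with constants depending only on $d$, $p$, $\alpha_1$, $\alpha_2$, $\mathcal{C}$, $\mathcal{M}$, $R_0$ and $\diam(\Omega)$. Choosing $A$ large and then $\gamma$ small so that $C(A^{-p} + \gamma^{2}A^{-2}) < A^{-q}$ for some fixed $q \in (2,p)$, multiplying by $\lambda^{q/2 - 1}$, integrating over $\lambda \in (0,\infty)$, and invoking the $\L^{q/2}$-boundedness of $\Lop_\Omega$ gives $\|Tf\|_{\L^q(\Omega;\IC^m)} \leq C \|f\|_{\L^q(\Omega;\IC^k)}$. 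A density argument extends this to all of $\L^q(\Omega;\IC^k)$.

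The main obstacle I anticipate is the boundary bookkeeping in the Vitali step: when the Whitney-type ball $B(x_i, \alpha_2 r_i)$ protrudes outside $\Omega$, the hypothesis only applies if the centre is moved onto $\partial \Omega$, and one must do so while preserving both the covering property and the stopping-time bound on the enlarged ball. This is precisely where the measurability of $\Omega$ rather than any smoothness is used, and it is the reason one needs a two-parameter dilation $\alpha_1 < \alpha_2$ in the statement: $\alpha_2$ gives room to relocate the centre, while $\alpha_1$ appears in the reverse-H\"older conclusion. Verifying that this relocation keeps the doubling and overlap constants under control is the technical heart of the argument.
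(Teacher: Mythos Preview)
The paper does not prove this theorem at all: it is merely quoted, with the proof delegated to Shen~\cite[Thm.~3.3]{Shen} for Lipschitz domains and to~\cite[Thm.~4.1]{Tolksdorf} for general bounded measurable sets. There is therefore no ``paper's own proof'' to compare against; the relevant comparison is with those references.

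Your outline is the standard Shen good-$\lambda$ strategy and is essentially what one finds in~\cite{Shen} and~\cite{Tolksdorf}: a Calder\'on--Zygmund stopping-time decomposition on the level sets of a localised maximal function of $|Tf|^2$, the near/far splitting of $f$ on each selected ball, the use of the $\L^2$-bound for the near part and of the reverse H\"older hypothesis for the far part, and then integration of the resulting good-$\lambda$ inequality. You have also correctly flagged the one genuinely nontrivial point in passing from Lipschitz to merely measurable $\Omega$, namely the relocation of centres onto $\partial\Omega$ when a stopping ball protrudes, and the role of the gap $\alpha_1<\alpha_2$ in making this work. This is exactly the step that~\cite[Thm.~4.1]{Tolksdorf} handles and that distinguishes it from Shen's original version. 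So your proposal is on the right track and aligned with the cited proofs; what remains is the careful execution of the covering/overlap bookkeeping, for which you can follow~\cite{Tolksdorf} directly.
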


Now we are in the position to give a proof of Theorem~\ref{Thm: Analyticity}.

\begin{proof}[Proof of Theorem~\ref{Thm: Analyticity} and Theorem~\ref{Thm: Gradient estimates for resolvent}\ref{Thm: Gradient estimates for resolvent-1}]
By the Lax--Milgram lemma, there are $\theta_1 \in (\pi / 2 , \pi)$ and $\mathcal{M} > 0$, 
depending only on $c_\bullet$ and $c^\bullet$ such that 
$\Sec_{\theta_1} \subset \rho(- A_2)$ and 
\[
  \| \lambda (\lambda+ A_2)^{-1} f \|_{\L^2(\Omega)} 
\leq \mathcal{M} \| f \|_{\L^2(\Omega)} \qquad (f \in \L^2(\Omega) , \; \lambda \in \Sec_{\theta_1}).
\]
Similarly, it follows that
\begin{align}
\label{Eq: Resolvent divergence estimate}
 \lvert \lambda \rvert^{\frac{1}{2}} \| (\lambda+ A_2)^{-1} \dive g \|_{\L^2(\Omega)} 
\leq \mathcal{M} \| g \|_{\L^2(\Omega ; \IC^d)} 
   \qquad (g \in \L^2(\Omega ; \IC^d) , \; \lambda \in \Sec_{\theta_1}).
\end{align}
Here the operator $(\lambda + A_2)^{-1} \dive$ is understood as the solution operator to the equation
\begin{align*}
 \lambda \int_{\O} u \overline{v} \; \d x + \mathfrak{t} [u , v] = - \int_{\O} \langle g , \nabla v \rangle \; \d x \qquad (v \in \W^{1 , 2}_D (\O)).
\end{align*}
Let $2 < p < p_0 (\mu)$. Furthermore, let $\theta_2 \in (\pi / 2 , \pi)$ denote the angle
obtained in Theorem~\ref{Thm: Reverse Holder}. 
Define $\theta \coloneqq \min\{ \theta_1 , \theta_2 \}$. 
Let $0 < c < 1$ be the constant (which only depends on $d$, $p$ and
$M$) as in Theorem~\ref{Thm: Reverse Holder}.
Let $\mathcal{C}$ be as in Theorem~\ref{Thm: Reverse Holder}.
Note that $\mathcal{C}$ depends only on $d$, $p$, $M$, $c_{\bullet}$, $c^{\bullet}$ and 
$\gamma_0$.
For all $\lambda \in \Sec_{\theta}$ define
\begin{align*}
 T_{\lambda} \colon \L^2 (\Omega ; \IC^{1 + d}) \to \L^2(\Omega ; \IC^{1 + 1}), 
\quad T_\lambda(f , g) = \begin{pmatrix} \lambda (\lambda + A_2)^{-1} f \\ \lvert \lambda \rvert^{\frac{1}{2}} (\lambda+ A_2)^{-1} \dive g \end{pmatrix}.
\end{align*}
Let $0 < r < c M / 4 \eqqcolon R_0$ and $x_0 \in \overline{\O}$.
Suppose either $x_0 \in \partial \O$ or $B(x_0 , (3 / c) r) \subset \O$.
Let $(f , g) \in \L^{\infty} (\O ; \IC^{1 + d})$ and suppose that $(f , g)$ vanishes on 
$\O (x_0 , (3 / c) r)$. 
Then 
\begin{align*}
 \bigg( \frac{1}{r^d} \int_{\O (x_0 , r)} \lvert T_{\lambda} (f , g) \rvert^{\frac{p d}{d - 2}} \; \d x \bigg)^{\frac{d - 2}{p d}} \leq \mathcal{C} \bigg( \frac{1}{r^d} \int_{\O (x_0 , (2 / c) r)} \lvert T_{\lambda} (f , g) \rvert^2 \; \d x \bigg)^{\frac{1}{2}},
\end{align*}
by Theorem~\ref{Thm: Reverse Holder}.
Taking $\alpha_1 \coloneqq 2 / c$ and $\alpha_2 \coloneqq 3 / c$ it follows from Theorem~\ref{Thm: Lp-extrapolation theorem} that $T_{\lambda}$ restricts to a bounded operator on $\L^q (\Omega)$ for all 
$q \in (2 , p d / (d - 2))$. 
Moreover, the operator norm is bounded by a
constant that depends only on $d$, $p$, $q$, $M$, $c$, $\mathcal{C}$, $\mathcal{M}$, 
$\gamma_0$ and $\diam(\O)$.
This implies that there exists a constant $C > 0$ such that
\begin{align*}
 \lvert \lambda \rvert \| (\lambda + A_2)^{-1} f \|_{\L^q (\Omega)} \leq C \| f \|_{\L^q (\Omega)} 
\qquad (f \in \L^q (\Omega) ,\;  \lambda \in \Sec_{\theta})
\end{align*}
and
\begin{align*}
 \lvert \lambda \rvert^{\frac{1}{2}} \| (\lambda + A_2)^{-1} \dive g \|_{\L^q (\Omega)} 
\leq C \| g \|_{\L^q (\Omega ; \IC^d)} 
\qquad (g \in \L^q (\Omega ; \IC^d) , \; \lambda \in \Sec_{\theta}).
\end{align*}
Since $A_q$ is the part of $A_2$ in $\L^q(\Omega)$ this immediately yields
$\Sec_{\theta} \subset \rho(- A_q)$ and 
$(\lambda + A_2)^{-1} f = (\lambda + A_q)^{-1} f$ for all $f \in \L^q (\Omega)$ 
and $\lambda \in \Sec_{\theta}$. 
Consequently, $A_q$ is a sectorial operator in $\L^q (\O)$ and by~\cite[Prop.~2.1.1(h)]{Haase} 
it is densely defined. 
Now Lemma~\ref{Lem: Duality} yields that $(A_2^*)_{q^{\prime}}$ is
well-defined and by duality it is sectorial  of the same angle. 
The operator $A_2^*$ is
the divergence form operator that is associated to the matrix $\mu^*$.
Finally, since $(\mu^*)^* = \mu$ and because $p_0(\mu) = p_0 (\mu^*)$,
by~\cite[Cor.~5.17]{Carbonaro_Dragicevic}, we find by the argumentation
above with $\mu$ replaced by $\mu^*$ that $A_{q^{\prime}}$ is
well-defined and sectorial. Moreover, the gradient estimate in Theorem~\ref{Thm: Gradient estimates for resolvent}\ref{Thm: Gradient estimates for resolvent-1} follows by duality as well.
Since $2 < p < p_0 (\mu)$ is
arbitrary this concludes the proof.
\end{proof}

The proof of Theorem~\ref{Thm: Gradient estimates for resolvent}\ref{Thm: Gradient estimates for resolvent-2} and~\ref{Thm: Gradient estimates for resolvent-3} is similar. 

\begin{proof}[Proof of Theorem~\ref{Thm: Gradient estimates for resolvent}\ref{Thm: Gradient estimates for resolvent-2} and~\ref{Thm: Gradient estimates for resolvent-3}]

Dualising~\eqref{Eq: Resolvent divergence estimate} implies the
estimate
\begin{align*}
 \lvert \lambda \rvert^{\frac{1}{2}} \| \nabla (\lambda+ A_2)^{-1} f \|_{\L^2(\Omega ; \IC^d)} \leq \mathcal{M} \| f \|_{\L^2(\Omega)} \qquad (f \in \L^2(\Omega) , \lambda \in \Sec_{\theta_1}).
\end{align*}
Moreover, the Lax--Milgram lemma implies the validity of
\begin{align*}
 \| \nabla (\lambda + A_2)^{-1} \dive g \|_{\L^2 (\Omega ; \IC^d)} \leq \mathcal{M} \| g \|_{\L^2 (\Omega ; \IC^d)} \qquad (g \in \L^2 (\Omega ; \IC^d) , \lambda \in \Sec_{\theta_1}).
\end{align*}
Let $\theta_2 \in (\pi / 2 , \pi)$ denote the angle as in Lemma~\ref{Lem: Caccioppoli} 
and define $\theta \coloneqq \min\{ \theta_1 , \theta_2 \}$. 
Define the operator
\begin{align*}
 S_{\lambda} \colon \L^2 (\O ; \IC^{1 + d}) \to \L^2 (\O ; \IC^{1 + d + 1 + d}), 
\quad S_\lambda(f , g) = \begin{pmatrix} \lambda (\lambda + A_2)^{-1} f \\ \lvert \lambda \rvert^{\frac{1}{2}} \nabla (\lambda + A_2)^{-1} f \\ \lvert \lambda \rvert^{\frac{1}{2}} (\lambda + A_2)^{-1} \dive g  \\ \nabla (\lambda + A_2)^{-1} \dive g \end{pmatrix},
\end{align*}
which defines a uniformly bounded family of operators on $\L^2$. 

Let $0 < r < \min\{ s_0 / 4 , r_0 / (4 \iota) , r_0 , 1 / (16 M \sqrt{d}) \}$ 
and let $x_0 \in \overline{\Omega}$ satisfy either $x_0 \in \partial \Omega$ 
or $B(x_0 , 16 M^2 \sqrt{d} r) \subset \O$. Let $(f , g) \in \L^{\infty} (\O ; \IC^{1 + d})$ 
vanish on $\O (x_0 , 16 M^2 \sqrt{d} r)$. Then by
Lemma~\ref{Lem: Higher integrability of gradient} there exists a
constant $C > 0$ such that the weak reverse H\"older estimate
\begin{align*}
 \bigg( \frac{1}{r^d} \int_{\O (x_0 , r)} \lvert S_{\lambda} (f , g) \rvert^2 \; \d x \bigg)^{\frac{1}{2}} \leq C \bigg( \frac{1}{r^d} \int_{\O (x_0 , 8 M^2 \sqrt{d} r)} \lvert S_{\lambda} (f , g) \rvert^{\frac{2 d}{d + 2}} \; \d x \bigg)^{\frac{d + 2}{2 d}}
\end{align*}
is valid. 
By~\cite[Lem.~4.2]{Tolksdorf} the weak reverse H\"older estimate is even valid for all 
$x_0 \in \IR^d$ with a different constant $C$ (independent of $x_0$ and $r$). 
Consequently, the self-improving property of weak reverse H\"older estimates, 
see e.g.,~\cite[Thm.~6.38]{Giaquinta_Martinazzi}, establishes the existence of an $\eps > 0$
 such that the weak reverse H\"older estimate
\begin{align*}
 \bigg( \frac{1}{r^d} \int_{\O(x_0 , r)} \lvert S_{\lambda} (f , g) \rvert^p \; \d x \bigg)^{\frac{1}{p}} \leq C \bigg( \frac{1}{r^d} \int_{\O(x_0 , 8 M^2 \sqrt{d} r)} \lvert S_{\lambda} (f , g) \rvert^2 \; \d x \bigg)^{\frac{1}{2}}
\end{align*}
is valid for all $2 < p < 2 + \eps$.
It follows that $S_{\lambda}$ restricts to a uniformly bounded family of operators 
from $\L^q (\Omega ; \IC^{1 + d})$ into $\L^q (\Omega ; \IC^{1 + d + 1 + d})$ for all $2 < q < 2 + \eps$. Noticing that the boundedness of $S_{\lambda}$ implies the boundedness of the operators in each row proves the theorem.
\end{proof}

\section{Perturbation of real-valued matrices}
\label{Sec: Perturbation of real-valued matrices}

This section is devoted to the perturbation theory of elliptic operators with complex coefficients. First, we record the following lemma that shows that $p$-ellipticity is stable under small perturbations of the coefficients.

\begin{lemma} \label{lem: 601}
Let $\Omega \subset \IR^d$ be open and bounded, where $d \geq 3$.
Let $\mu \in \L^{\infty} (\Omega ; \IC^{d \times d})$ be 
subject to Assumption~\ref{Ass: Ellipticity}. 
Let $\nu \in \L^{\infty} (\Omega ; \IC^{d \times d})$ and suppose that 
$\|\nu\|_{\L^{\infty} (\Omega ; \Lop(\IC^d))} < c_{\bullet}$.
Then $\mu + \nu$ is subject to Assumption~\ref{Ass: Ellipticity}.
Moreover, 
\[
\Delta_p(\mu + \nu) 
\geq \Delta_p(\mu) - 2 \max \Big(\frac{1}{p} , \frac{1}{p'} \Big) \|\nu\|_{\L^{\infty} (\Omega ; \Lop(\IC^d))}
\]
for all $p \in (1,\infty)$.
In particular, if $\mu$ is $p$-elliptic and 
$2 \max(\frac{1}{p} , \frac{1}{p'}) \|\nu\|_{\L^{\infty} (\Omega ; \Lop(\IC^d))} < \Delta_p(\mu)$,
then $\mu + \nu$ is $p$-elliptic.
\end{lemma}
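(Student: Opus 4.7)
The plan is to verify each claim by direct bookkeeping on the definitions, with the observation that the entire statement boils down to two applications of Cauchy--Schwarz together with a sharp bound on the norm of $\cJ_p$.

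First I would check that $\mu + \nu$ satisfies Assumption~\ref{Ass: Ellipticity}. The boundedness part is the triangle inequality: $\|\mu + \nu\|_{\L^{\infty}(\Omega;\Lop(\IC^d))} \leq c^{\bullet} + \|\nu\|_{\L^{\infty}(\Omega;\Lop(\IC^d))}$. For the lower bound, for almost every $x \in \Omega$ and $\xi \in \IC^d$,
\[
\Re \langle (\mu + \nu)(x)\xi , \xi \rangle
\geq c_{\bullet} |\xi|^2 - |\langle \nu(x)\xi,\xi\rangle|
\geq \big( c_{\bullet} - \|\nu\|_{\L^{\infty}(\Omega;\Lop(\IC^d))} \big) |\xi|^2,
\]
which is strictly positive by the hypothesis $\|\nu\|_{\L^{\infty}(\Omega;\Lop(\IC^d))} < c_{\bullet}$.

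Next I would bound the norm of $\cJ_p$. Writing $\xi = \alpha + \ii \beta$ with $\alpha,\beta \in \IR^d$, the definition gives
\[
|\cJ_p(\xi)|^2
= \frac{4|\alpha|^2}{(p')^2} + \frac{4|\beta|^2}{p^2}
\leq 4 \max\!\Big(\tfrac{1}{p^2},\tfrac{1}{(p')^2}\Big) |\xi|^2,
\]
so $|\cJ_p(\xi)| \leq 2 \max(1/p,1/p')\,|\xi|$. Linearity of $\cJ_p$ over $\IR$ (so additivity in $\xi$) together with the Cauchy--Schwarz inequality then yields
\[
\Re \langle (\mu+\nu)(x)\xi, \cJ_p(\xi) \rangle
\geq \Re \langle \mu(x)\xi, \cJ_p(\xi) \rangle
- \|\nu\|_{\L^{\infty}(\Omega;\Lop(\IC^d))} \, |\xi|\, |\cJ_p(\xi)|.
\]
Dividing by $|\xi|^2$ and taking the infimum over unit vectors $\xi$ and essential infimum over $x \in \Omega$ gives
\[
\Delta_p(\mu+\nu)
\geq \Delta_p(\mu) - 2 \max\!\Big(\tfrac{1}{p},\tfrac{1}{p'}\Big) \|\nu\|_{\L^{\infty}(\Omega;\Lop(\IC^d))}.
\]

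The final assertion is then immediate from the characterisation that $\mu+\nu$ is $p$-elliptic if and only if $\Delta_p(\mu+\nu) > 0$. There is no real obstacle here; the only small care needed is the sharp constant in the bound $|\cJ_p(\xi)| \leq 2\max(1/p,1/p')|\xi|$, which is what produces the factor $2\max(1/p,1/p')$ in the perturbation estimate rather than a weaker universal constant.
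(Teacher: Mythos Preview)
Your proof is correct and follows essentially the same approach as the paper: both compute $|\cJ_p(\xi)|^2 = \tfrac{4}{(p')^2}|\alpha|^2 + \tfrac{4}{p^2}|\beta|^2 \leq 4\max(\tfrac{1}{p^2},\tfrac{1}{(p')^2})|\xi|^2$ and then apply Cauchy--Schwarz to bound $|\langle \nu(x)\xi,\cJ_p(\xi)\rangle|$. The paper simply declares the first part ``easy'' whereas you spell it out, but the argument is otherwise identical.
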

\begin{proof}
The first part is easy.

Let $\alpha,\beta \in \IR^d$.
Write $\xi = \alpha + \ii \beta$.
Let $x \in \Omega$.
Then 
$|\cJ_p(\xi)|^2 
= \frac{4}{(p^{\prime})^2} |\alpha|^2 + \frac{4}{p^2} |\beta|^2 
\leq 4 \max(\frac{1}{p^2} , \frac{1}{(p')^2}) |\xi|^2$.
So 
\[
|\langle \nu(x) \xi , \cJ_p(\xi) \rangle|
\leq \|\nu\|_{\L^{\infty} (\Omega ; \Lop(\IC^d))} |\xi| |\cJ_p(\xi)|
\leq 2 \max \Big(\frac{1}{p} , \frac{1}{p'} \Big) \|\nu\|_{\L^{\infty} (\Omega ; \Lop(\IC^d))} |\xi|^2
\]
and the lemma follows.
\end{proof}

Next, we prove Theorem~\ref{tnumrange307}. Recall that Lemma~\ref{Lem: Control of imaginary part} states that
$\mu$ is $p$-elliptic for all $p \in (1 , \infty)$ if and only if $\Im (\mu)$ = 0. Thus, Theorem~\ref{tnumrange307}
\textit{cannot} be concluded by perturbing the $p$-ellipticity as in
Lemma~\ref{lem: 601} and then by applying Theorem~\ref{Thm: Analyticity}.
Instead, the proof is based on the Gaussian kernel estimates obtained in 
\cite[Thm.~7.5]{ERe2}, which were a consequence of De Giorgi estimates
for the operator $A_2$.
The De Giorgi estimates allow to have a (complex) perturbation. 
This was shown first for operators on $\IR^d$ by Auscher~\cite[Thm.~4.4]{Aus1}.
We modify the proofs in~\cite{Aus1} and~\cite{ERe2} for our situation.
In the proof we need the next well-known lemma of Campanato, which allows for the perturbation of the De Giorgi
estimates, cf.\@ \cite[Lemma~5.12]{Giaquinta_Martinazzi} or \cite[Lemma~III.2.1]{Giaquinta}.

\begin{lemma} \label{ldir921}
For all $c,\alpha,\beta > 0$ with $\alpha > \beta$ there exist
$\varepsilon > 0$ and $\tilde c > 0$ {\rm (}depending only on $c$, $\alpha$ and $\beta${\rm )}
such that the following is valid.

Let $B \geq 0$, $R_0 > 0$, and let
$\Psi \colon (0,R_0] \to [0,\infty)$ be an increasing function
with the property that
\[
\Psi(r) \leq c \Big( \Big( \frac{r}{R} \Big)^\alpha + \varepsilon \Big) \Psi(R) + B R^\beta
\]
for all $r,R \in \IR$ with $0 < r \leq R \leq R_0$.
Then
\[
\Psi(r) \leq \tilde c \Big( \Big( \frac{r}{R} \Big)^\beta  \Psi(R) + B r^\beta \Big)
\]
for all $0 < r \leq R \leq R_0$.
\end{lemma}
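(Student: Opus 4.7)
The plan is to iterate the hypothesis along a geometric sequence of radii $R, \tau R, \tau^2 R, \ldots$ with a carefully chosen shrinking factor $\tau \in (0,1)$, arranging that each iteration contracts $\Psi$ by a power of $\tau$ strictly smaller than $\tau^\beta$. Since $\alpha > \beta$ this surplus of decay will convert the accumulating $B R^\beta$ errors into a convergent geometric series, rather than a sum that grows in the number of iterations.

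Concretely, I would fix an intermediate exponent $\gamma \in (\beta,\alpha)$. Since $\alpha > \gamma$, one may choose $\tau \in (0,1)$ so small that $c \tau^\alpha \leq \tfrac{1}{2} \tau^\gamma$, and then $\varepsilon > 0$ so small that $c \varepsilon \leq \tfrac{1}{2} \tau^\gamma$. Applied at $r = \tau R$, the hypothesis reduces to
\[
\Psi(\tau R) \leq \tau^\gamma \Psi(R) + B R^\beta \qquad (0 < R \leq R_0).
\]

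Iterating this on the geometric sequence yields by induction on $k \geq 0$ the bound
\[
\Psi(\tau^k R) \leq \tau^{k\gamma} \Psi(R) + C_0 \, B \, (\tau^k R)^\beta,
\]
with $C_0 = 1 / (\tau^\beta - \tau^\gamma)$; note that $\tau^\beta > \tau^\gamma$ because $\gamma > \beta$ and $\tau \in (0,1)$, which is precisely what keeps $C_0$ finite and independent of $k$. The inductive step reduces to the inequality $C_0 \tau^\gamma + 1 \leq C_0 \tau^\beta$, which this choice of $C_0$ is calibrated to satisfy.

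For an arbitrary $0 < r \leq R \leq R_0$ I would then pick $k \geq 0$ with $\tau^{k+1} R < r \leq \tau^k R$ and invoke monotonicity of $\Psi$. The elementary estimates $\tau^k \leq \tau^{-1}(r/R)$ and $\tau^k R \leq \tau^{-1} r$ convert the iterated bound into
\[
\Psi(r) \leq \tau^{-\gamma} \Big( \frac{r}{R} \Big)^{\! \gamma} \Psi(R) + C_0 \tau^{-\beta} B r^\beta,
\]
and the observation $(r/R)^\gamma \leq (r/R)^\beta$ (valid because $\gamma > \beta$ and $r \leq R$) upgrades this to the claimed inequality with $\tilde c = \max\{\tau^{-\gamma}, C_0 \tau^{-\beta}\}$. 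No step of this plan presents a serious obstacle; the one delicate point is the insistence on the \emph{strict} inequality $\gamma > \beta$, since taking $\gamma = \beta$ would force $C_0 = \infty$ and reflect that a pure $\tau^\beta$-contraction would accumulate $k$ copies of $B R^\beta$ after $k$ steps. Only the slack afforded by placing $\gamma$ strictly between $\beta$ and $\alpha$ turns that accumulation into a convergent geometric sum.
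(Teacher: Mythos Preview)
Your proof is correct and is precisely the standard iteration argument for this classical lemma of Campanato. The paper does not supply its own proof but cites \cite[Lem.~5.12]{Giaquinta_Martinazzi} and \cite[Lem.~III.2.1]{Giaquinta}, and your argument is essentially the one found in those references: fix a geometric step $\tau$ and an intermediate exponent $\gamma\in(\beta,\alpha)$, iterate along $\tau^k R$, and use the surplus decay $\gamma>\beta$ to sum the error terms geometrically.
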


In the forthcoming proofs it will be important to not emphasise the set $D$, where functions in $\W^{1 , 2}_D (\Omega)$ vanish, but to emphasise the complementary set $N = \partial \Omega \setminus D$. Thus, in order to simplify the notation, we introduce
\begin{align*}
 \widehat{\W}^{1 , 2}_N (\Omega) \coloneqq \W^{1 , 2}_{\partial \Omega \setminus N} (\Omega).
\end{align*}
Moreover, for all $x \in \IR^d$ and $r > 0$ we shall write in the following $N(x,r) = N \cap B(x,r)$ and $Q(x,r) = Q \cap B(x,r)$, where $Q$ will be $(- 1 , 1)^{d - 1} \times (0 , 1)$. Recall that we already write $\Omega (x , r)$ for $\Omega \cap B(x , r)$.

\begin{definition}
Let $\Omega \subset \IR^d$ be open, $N$ a relatively open subset 
of $\partial \Omega$ and $\Delta \subset \partial \Omega \setminus N$ be closed.
Let $\mu \in \L^{\infty} (\Omega ; \IC^{d \times d})$ subject to Assumption~\ref{Ass: Ellipticity}
and let $A_2$ be the associated operator.
Let $\kappa_0 \in (0,1)$, $c_{DG} > 0$ and $\Upsilon \subset \overline \Omega$ 
be a set.
Then we say that $A_2$ satisfies $(\kappa_0,c_{DG})$-\emph{De Giorgi
estimates} on $\Upsilon$ for functions vanishing on $\Delta$
and Neumann boundary conditions on $N$
if 
\[
\int_{\Omega (x,r)} |\nabla u|^2 \; \d x
\leq c_{DG} \Big( \frac{r}{R} \Big)^{d-2+2\kappa_0} \int_{\Omega (x,R)} |\nabla u|^2 \; \d x
\]
for all $x \in \Upsilon$, $0 < r \leq R \leq 1$ and $u \in \W^{1,2}_{\Delta}(\Omega)$ satisfying
\[
\int_{\Omega (x,R)}
    \langle \mu \nabla u , \nabla v \rangle  \; \d x
= 0
\]
for all $v \in \widehat \W^{1,2}_{N(x,R)}(\Omega (x,R))$.
\end{definition}

We start with a perturbation of~\cite[Prop.~5.3]{ERe2}. As we work in the following only in the $\L^2$ scale, we drop the index $p = 2$ at the operator. 
Furthermore, we adopt the notation $A^{\mu}$ to indicate that $A^{\mu}$ is the divergence-form operator with coefficients $\mu$.

\begin{proposition} \label{pdir922}
Let $\Omega \subset \IR^d$ be open, $D \subset \partial \Omega$ be closed and
$N := \partial \Omega \setminus D$ subject to Assumption~N. Let $x_0 \in \overline{N} \cap D$
and let $\Phi \coloneqq \Phi_{x_0}$ be the bi-Lipschitz homeomorphism of Assumption~N with corresponding neighbourhood $U \coloneqq U_{x_0}$.
In addition, suppose that Assumption~P is valid and let $\mu \in \L^{\infty} (\Omega ; \IR^{d \times d})$ be subject to Assumption~\ref{Ass: Ellipticity}.

Then there exists an $\varepsilon > 0$ such that for all (complex valued)
$\nu \in \L^{\infty} (\Omega ; \IC^{d \times d})$ satisfying
$\|\nu\|_{\L^{\infty} (\Omega ; \Lop(\IC^d))} < \varepsilon$
the operator $A^{\mu + \nu}$ is m-sectorial.
Moreover, if $A_{\Phi}^{\mu + \nu}$ denotes the operator in $(-1,1)^{d-1} \times (0,1)$ obtained
from $A^{\mu + \nu}$ under the transformation $\Phi$, there are $\kappa_0 \in (0,1)$ and $c_{DG} > 0$ such that the operator
$A^{\mu + \nu}_{\Phi}$ satisfies 
$(\kappa_0,c_{DG})$-De Giorgi estimates on $(-\frac{1}{2},\frac{1}{2})^{d-1} \times (0,\frac{1}{2})$ 
for functions vanishing on $\overline{\Phi(U \cap  D)}$
and Neumann boundary conditions on $\Phi(U \cap N)$.
The constants $\varepsilon$, $\kappa_0$ and $c_{DG}$ can be chosen to depend
only on $d$, $M$, $c_\bullet$, $c^\bullet$, $c_0$ and $c_1$.
\end{proposition}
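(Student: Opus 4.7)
m-sectoriality of $A^{\mu+\nu}$ is the easy part: Lemma~\ref{lem: 601} shows that for $\|\nu\|_{\L^\infty(\Omega;\Lop(\IC^d))} < c_\bullet/2$ the matrix $\mu+\nu$ still satisfies Assumption~\ref{Ass: Ellipticity} (with halved real-part ellipticity constant), so the associated form is sectorial and m-sectoriality follows from the first representation theorem. The real work lies in the De Giorgi estimate, which I would obtain by perturbing off the corresponding estimate for the real-valued $\mu$ and then absorbing the error via the Campanato Lemma~\ref{ldir921}.

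The starting point is that the \emph{unperturbed} operator $A^{\mu}_{\Phi}$, whose coefficients are the standard bi-Lipschitz pull-back of $\mu$ under $\Phi$, already satisfies $(\kappa_0,c_{DG})$-De Giorgi estimates on $(-\tfrac{1}{2},\tfrac{1}{2})^{d-1}\times(0,\tfrac{1}{2})$ for functions vanishing on $\overline{\Phi(U\cap D)}$ and with Neumann condition on $\Phi(U\cap N)$. This is essentially \cite[Prop.~5.3]{ERe2}: the real-valued case is the classical De Giorgi--Nash oscillation decay on the flat half-cube, in which Assumption~P supplies the local Poincar\'e inequality near the flattened Dirichlet part that the iteration needs.

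For the perturbation step, write $\Omega_\Phi := (-1,1)^{d-1}\times(0,1)$ and $\Delta := \overline{\Phi(U\cap D)}$. Fix $x$ in the required set, $0 < r \le R \le 1$, and a solution $u \in \W^{1,2}_\Delta(\Omega_\Phi)$ of the perturbed equation on $\Omega_\Phi(x,R)$. By Lax--Milgram, there is a unique $w \in \W^{1,2}(\Omega_\Phi(x,R))$ with $w-u \in \widehat{\W}^{1,2}_{N(x,R)}(\Omega_\Phi(x,R))$ that solves $\int \langle \mu \nabla w , \nabla v \rangle \, \d x = 0$ for all such $v$. Since $w-u$ is admissible as a test function for \emph{both} equations, subtracting yields
\[
\int_{\Omega_\Phi(x,R)} \langle \mu \nabla(w-u),\nabla(w-u)\rangle \, \d x
 = - \int_{\Omega_\Phi(x,R)} \langle \nu \nabla u,\nabla(w-u)\rangle \, \d x ,
\]
so ellipticity of $\mu$ and Cauchy--Schwarz give $\|\nabla(w-u)\|_{\L^2(\Omega_\Phi(x,R))} \le c_\bullet^{-1} \|\nu\|_{\L^\infty} \|\nabla u\|_{\L^2(\Omega_\Phi(x,R))}$. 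Applying the unperturbed De Giorgi estimate to $w$ on $\Omega_\Phi(x,r)$ and using the triangle inequality twice then produces
\[
\int_{\Omega_\Phi(x,r)} |\nabla u|^2 \, \d x
\le C \Big[ (r/R)^{d-2+2\kappa_0} + c_\bullet^{-2} \|\nu\|_{\L^\infty}^2 \Big] \int_{\Omega_\Phi(x,R)} |\nabla u|^2 \, \d x ,
\]
with $C$ depending only on $c_\bullet$, $c^\bullet$ and $c_{DG}$. Setting $\Psi(r) := \int_{\Omega_\Phi(x,r)} |\nabla u|^2 \, \d x$ and applying Lemma~\ref{ldir921} with $\alpha = d-2+2\kappa_0$, $\beta = d-2+2\kappa_0'$ for any $\kappa_0' \in (0,\kappa_0)$ and $B=0$ delivers the desired De Giorgi estimate with exponent $\kappa_0'$, \emph{provided} $\varepsilon$ is chosen smaller than $c_\bullet \sqrt{\varepsilon(\alpha,\beta,C)}$ with the $\varepsilon(\alpha,\beta,C)$ from Lemma~\ref{ldir921}.

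The principal obstacle is the bookkeeping around the boundary conditions in the comparison: $w$ has to be constructed so that $w-u$ lies in exactly the space $\widehat{\W}^{1,2}_{N(x,R)}(\Omega_\Phi(x,R))$ used to test both equations, which is why the result must be formulated on the flattened side via $\Phi$ rather than for $A^{\mu+\nu}$ directly. Once this is set up correctly, the remaining work is to check that all constants -- those from the unperturbed De Giorgi estimate, the Lax--Milgram constant for $\mu$, and the threshold supplied by Lemma~\ref{ldir921} -- depend only on $d$, $M$, $c_\bullet$, $c^\bullet$, $c_0$ and $c_1$, which is direct.
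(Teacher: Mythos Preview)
Your approach is correct and is exactly the one the paper takes: compare $u$ with a $\mu$-harmonic function $w$ on $Q(x,R)$, apply the unperturbed De Giorgi estimate from \cite[Prop.~5.3]{ERe2} to $w$, control $\nabla(w-u)$ by $\|\nu\|_{\L^\infty}\|\nabla u\|$ via ellipticity, and close with Lemma~\ref{ldir921}. Two technical points you should make explicit when writing it up.

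First, the De Giorgi estimate as defined here applies to functions in $\W^{1,2}_{\Delta}(Q)$, i.e., defined on the \emph{full} cube $Q=(-1,1)^{d-1}\times(0,1)$ and vanishing on $\Delta$. Your comparison function $w$ is only constructed on $Q(x,R)$. The paper fixes this by extending $w-u\in\widehat{\W}^{1,2}_{N_Q(x,R)}(Q(x,R))$ by zero to all of $Q$ and invoking \cite[Lem.~6.4]{ERe2} to see that the extension lies in $\widehat{\W}^{1,2}_{N_Q}(Q)\subset\W^{1,2}_{\Delta}(Q)$; then $w=u+(\text{extension})\in\W^{1,2}_{\Delta}(Q)$ since $u$ is. This is the genuine ``bookkeeping'' step, not the one you name: that $w-u$ lies in the test space is automatic from how you set up Lax--Milgram. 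Relatedly, Lax--Milgram itself needs coercivity of the $\mu^\Phi$-form on $\widehat{\W}^{1,2}_{N_Q(x,R)}(Q(x,R))$, which the paper obtains from \cite[Lem.~6.1(b)]{ERe2}.

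Second, throughout the comparison you should work with the transformed coefficients $\mu^\Phi$ and $(\mu+\nu)^\Phi$, not $\mu$ and $\nu$. The ellipticity constant of $\mu^\Phi$ and the bound on $\nu^\Phi$ each pick up a factor of order $d!\,M^{d+2}$ from the bi-Lipschitz change of variables (cf.\ \cite[Prop.~4.3(b)]{ERe2}), which is why the paper's final bound has a $(d!\,M^{d+2})^4$ in the smallness threshold for $\varepsilon$. You acknowledge the $M$-dependence at the end, but the computation should reflect it.
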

\begin{proof}
By~\cite[Prop.~5.3]{ERe2} there exist 
$\kappa_0 \in (0,1)$ and $c_{DG} > 0$ such that the transformed operator
$A_\Phi^\mu$ satisfies 
$(\kappa_0,c_{DG})$-De Giorgi estimates on $(-\frac{1}{2},\frac{1}{2})^{d-1} \times (0,\frac{1}{2})$ 
for functions vanishing on $\overline{\Phi(U \cap  D)}$
and Neumann boundary conditions on $\Phi(U \cap N)$.
Set $c = 4 c_{DG}$, $\alpha = d - 2 + 2 \kappa_0$ and $\beta = d - 2 + \kappa_0$.
Let $\varepsilon > 0$ and $\tilde c > 0$ be as in Lemma~\ref{ldir921}.
Define
\[
\tilde \varepsilon
= \frac{c_\bullet}{2} \wedge
     \frac{\sqrt{\varepsilon} c_\bullet}{2 (d! M^{d+2})^2} \Big( \frac{4 c_{DG}}{2 + 4 c_{DG}} \Big)^{1/2}
.  \]
Then 
\[
\frac{2 + 4 c_{DG}}{4 c_{DG}}  (d! M^{d+2})^4 \frac{4 \tilde \varepsilon^2}{c_\bullet^2} \leq \varepsilon
\]
and $\tilde \varepsilon \in (0,c_\bullet)$.

Let $\nu \in \L^{\infty} (\Omega ; \IC^{d \times d})$ with
$\|\nu\|_{\L^{\infty} (\Omega ; \Lop(\IC^d))} < \tilde \varepsilon$.
Let $\mu^\Phi$ and $(\mu + \nu)^\Phi$ be the coefficient functions on 
$(-1,1)^{d-1} \times (0,1)$ obtained
from $\mu$ and $\mu + \nu$ under the transformation $\Phi$.
Define $Q = (-1,1)^{d-1} \times (0,1)$, 
$D_Q = \overline{\Phi(U \cap D)}$ and $N_Q = \Phi(U \cap N)$.
Let $u \in \W^{1,2}_{D_Q}(Q)$ and $x \in \frac{1}{2} Q$.
Fix $R_0 \in (0,1]$.
Suppose that 
\[
\int_{Q (x,R_0)}
    \langle (\mu + \nu)^\Phi \nabla u, \nabla v \rangle \; \d x
= 0
\]
for all $v \in \widehat \W^{1,2}_{N_Q(x,R)}(Q (x,R_0))$.
Let $0 < r \leq R \leq R_0$.
By~\cite[Lem.~6.1(b)]{ERe2} and the Lax--Milgram lemma there exists a unique
$\tilde v \in \widehat \W^{1,2}_{N_Q(x,R)}(Q (x,R))$
such that 
\begin{equation}
\int_{Q (x,R)}
    \langle \mu^\Phi \nabla \tilde v, \nabla \varphi \rangle \; \d x
= \int_{Q (x,R)}
    \langle \mu^\Phi \nabla u, \nabla \varphi \rangle \; \d x
\label{epdir922;4}
\end{equation}
for all $\varphi \in \widehat \W^{1,2}_{N_Q(x,R)}(Q (x,R))$.
Define $v \colon Q \to \IC$ by 
\[
v(y) 
= \left\{ \begin{array}{ll}
   \tilde v(y), & \mbox{if } y \in Q(x,R) ,  \\[5pt]
   0, & \mbox{if } y \in Q \setminus Q (x,R) .
          \end{array} \right.
\]
Then $v \in \widehat \W^{1,2}_{N_Q}(Q) \subset \W^{1,2}_{D_Q}(Q)$ 
by~\cite[Lem.~6.4]{ERe2}.
Set $w = u - \tilde v$.
Then $w \in \W^{1,2}_{D_Q}(Q)$.
Moreover, 
\[
\int_{Q (x,R)}
    \langle \mu^\Phi \nabla w, \nabla \varphi \rangle \; \d x
= 0
\]
for all $\varphi \in \widehat \W^{1,2}_{N_Q(x,R)}(Q (x,R))$ by~\eqref{epdir922;4}.
The De Giorgi inequalities applied to the function $w$
imply
\begin{eqnarray*}
\int_{Q (x,r)} |\nabla u|^2 \; \d x
& \leq & 2 \int_{Q(x,r)} |\nabla w|^2  \; \d x
       + 2 \int_{Q(x,r)} |\nabla \tilde v|^2 \; \d x \nonumber \\
& \leq & 2 c_{DG} \Big( \frac{r}{R} \Big)^{d-2+2\kappa_0} \int_{Q(x,R)} |\nabla w|^2 \; \d x
   + 2 \int_{Q (x,r)} |\nabla \tilde v|^2 \; \d x \nonumber  \\
& \leq & 4 c_{DG} \Big( \frac{r}{R} \Big)^{d-2+2\kappa_0} \int_{Q (x,R)} |\nabla u|^2 \; \d x
   + (2 + 4 c_{DG}) \int_{Q (x,R)} |\nabla \tilde v|^2   \; \d x
 .  \hspace{8mm}
\end{eqnarray*}
Choose $\varphi = \tilde v$ in (\ref{epdir922;4}).
Then 
\[ \int_{Q (x,R)} \langle \mu^\Phi \nabla \tilde v, \nabla \tilde v \rangle \; \d x
   = \int_{Q (x,R)} \langle \mu^\Phi \nabla u, \nabla \tilde v \rangle \; \d x
   = \int_{Q (x,R)} \langle (\mu^\Phi - (\mu + \nu)^\Phi) \nabla u , \nabla \tilde v \rangle  \; \d x. 
\]
Hence the estimate $c_\bullet - \tilde \varepsilon \geq \frac{c_\bullet}{2}$,
ellipticity on $Q$ (see~\cite{ERe2} Prop.~4.3(b))
and the Cauchy--Schwarz inequality give
\[
(d! M^{d+2})^{-1} \frac{c_\bullet}{2} \int_{Q (x,R)} |\nabla \tilde v|^2 \; \d x
\leq d! M^{d+2} \tilde \varepsilon \bigg(\int_{Q (x,R)} |\nabla \tilde u|^2 \; \d x \bigg)^{\frac{1}{2}}
    \bigg( \int_{Q (x,R)} |\nabla \tilde v|^2 \; \d x \bigg)^{\frac{1}{2}}
 .
\]
Therefore
\[
\int_{Q(x,R)} |\nabla \tilde v|^2  \; \d x
\leq (d! M^{d+2})^4 \frac{4 \tilde \varepsilon^2}{c_\bullet^2} 
    \int_{Q (x,R)} |\nabla \tilde u|^2 \; \d x
\]
and 
\begin{eqnarray*}
\int_{Q(x,r)} |\nabla u|^2 \; \d x
& \leq & 4 c_{DG} \Big( \frac{r}{R} \Big)^{d-2+2\kappa_0} \int_{Q(x,R)} |\nabla u|^2 \; \d x
     \\*
& & \hspace*{30mm} {}
   + (2 + 4 c_{DG}) (d! M^{d+2})^4 \frac{4 \tilde \varepsilon^2}{c_\bullet^2} \int_{Q (x,R)} |\nabla \tilde u|^2  \; \d x \\ 
& \leq & 4 c_{DG} \Big( \Big( \frac{r}{R} \Big)^{d-2+2\kappa_0} + \varepsilon \Big) \int_{Q (x,R)} |\nabla u|^2 \; \d x
 .  
\end{eqnarray*}
Hence by Lemma~\ref{ldir921} one concludes that 
\[
\int_{Q(x,r)} |\nabla u|^2 \; \d x
\leq \tilde c \Big( \frac{r}{R} \Big)^{d-2+\kappa_0} \int_{Q(x,R)} |\nabla u|^2 \; \d x
\]
for all $0 < r \leq R \leq R_0$.
The proposition follows by choosing $R = R_0$.
\end{proof}

The next proposition gives perturbed De Giorgi estimates near the  
Neumann part of the boundary, but away from the Dirichlet part of the boundary.

\begin{proposition} \label{pdir923}
Let $\Omega \subset \IR^d$ be open, $D \subset \partial \Omega$ be closed and $N = \partial \Omega \setminus D$ subject to Assumption~N. Let $x_0 \in N$
and let $\Phi_{x_0}$ be the bi-Lipschitz homeomorphism of Assumption~N whose corresponding set $U_{x_0}$ satisfies $U_{x_0} \cap \partial \Omega \subset N$.
Moreover, let 
$\mu \in \L^{\infty} (\Omega ; \IR^{d \times d})$ be subject to Assumption~\ref{Ass: Ellipticity}.

Then there exists an $\varepsilon > 0$ such that for all (complex valued)
$\nu \in \L^{\infty} (\Omega ; \IC^{d \times d})$ satisfying
$\|\nu\|_{\L^{\infty} (\Omega ; \Lop(\IC^d))} < \varepsilon$
the operator $A^{\mu + \nu}$ is m-sectorial.
Moreover, if $A_{\Phi}^{\mu + \nu}$ denotes the operator in $(-1,1)^{d-1} \times (0,1)$ obtained
from $A^{\mu + \nu}$ under the transformation $\Phi$, there are $\kappa_0 \in (0,1)$ and $c_{DG} > 0$ such that
$A_{\Phi}^{\mu + \nu}$ satisfies 
$(\kappa_0,c_{DG})$-De Giorgi estimates on $(-\frac{1}{2},\frac{1}{2})^{d-1} \times (0,\frac{1}{2})$ 
for functions vanishing on $\emptyset$
and Neumann boundary conditions on $(-1,1)^{d-1} \times \{ 0 \} $.
The constants $\varepsilon$, $\kappa_0$ and $c_{DG}$ can be chosen to depend
only on $d$, $M$, $c_\bullet$ and $c^\bullet$.
\end{proposition}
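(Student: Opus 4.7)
The plan is to follow the blueprint of the proof of Proposition~\ref{pdir922} essentially line by line, with only the function spaces adjusted to reflect the fact that the Dirichlet part is empty in the neighbourhood $U_{x_0}$. In more detail, the starting ingredient is the De Giorgi estimate for the unperturbed, real-coefficient operator $A^\mu_\Phi$ on the pure Neumann model set $Q = (-1,1)^{d-1} \times (0,1)$ with Neumann data on the flat face $(-1,1)^{d-1} \times \{0\}$; this is the pure-Neumann analogue of the estimate used at the beginning of Proposition~\ref{pdir922} and is provided by~\cite{ERe2} (with constants $\kappa_0 \in (0,1)$ and $c_{DG} > 0$ depending only on $d$, $M$, $c_\bullet$, $c^\bullet$). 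M-sectoriality of $A^{\mu+\nu}$ is a routine consequence of the Lax--Milgram lemma once $\tilde\varepsilon$ is chosen smaller than $c_\bullet$, which keeps the perturbed matrix $\mu + \nu$ uniformly elliptic with constant at least $c_\bullet/2$.

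Next I would feed the parameters $c = 4 c_{DG}$, $\alpha = d-2 + 2\kappa_0$, $\beta = d-2 + \kappa_0$ into Campanato's Lemma~\ref{ldir921} to produce $\varepsilon > 0$ and $\tilde c > 0$, and then fix
\[
\tilde\varepsilon = \frac{c_\bullet}{2} \wedge \frac{\sqrt{\varepsilon}\, c_\bullet}{2(d!M^{d+2})^2}\Big(\frac{4 c_{DG}}{2 + 4 c_{DG}}\Big)^{1/2},
\]
exactly as in the proof of Proposition~\ref{pdir922}, so that $(2 + 4 c_{DG})(d!M^{d+2})^4 (4\tilde\varepsilon^2/c_\bullet^2) \leq 4 c_{DG}\varepsilon$. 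Any $\nu$ with $\|\nu\|_{\L^\infty(\Omega;\Lop(\IC^d))} < \tilde\varepsilon$ will then be admissible.

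For the key iteration step, let $N_Q = (-1,1)^{d-1} \times \{0\}$, let $u \in \W^{1,2}(Q)$ satisfy the perturbed weak equation with coefficients $(\mu+\nu)^\Phi$ against test functions in $\widehat\W^{1,2}_{N_Q(x,R_0)}(Q(x,R_0))$, and fix $x \in \frac12 Q$ and $0 < r \leq R \leq R_0 \leq 1$. Using Lax--Milgram (cf.~\cite[Lem.~6.1(b)]{ERe2}) with the real coefficients $\mu^\Phi$, I solve for $\tilde v \in \widehat\W^{1,2}_{N_Q(x,R)}(Q(x,R))$ such that
\[
\int_{Q(x,R)} \langle \mu^\Phi \nabla \tilde v, \nabla \varphi\rangle \; \d x
 = \int_{Q(x,R)} \langle \mu^\Phi \nabla u, \nabla \varphi\rangle \; \d x
\]
for all $\varphi \in \widehat\W^{1,2}_{N_Q(x,R)}(Q(x,R))$, and extend $\tilde v$ by zero across $\partial B(x,R) \cap Q$ to obtain $v \in \widehat\W^{1,2}_{N_Q}(Q)$ via the pure-Neumann extension statement analogous to~\cite[Lem.~6.4]{ERe2}. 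The difference $w = u - \tilde v$ then solves the unperturbed homogeneous real-coefficient equation on $Q(x,R)$, so the unperturbed De Giorgi estimate applied to $w$ gives
\[
\int_{Q(x,r)} |\nabla u|^2 \; \d x
 \leq 4 c_{DG}\Big(\frac{r}{R}\Big)^{d-2+2\kappa_0} \int_{Q(x,R)} |\nabla u|^2 \; \d x
 + (2 + 4 c_{DG}) \int_{Q(x,R)} |\nabla \tilde v|^2 \; \d x.
\]
Testing the defining equation for $\tilde v$ with $\varphi = \tilde v$ and noting that the right-hand side equals $\int_{Q(x,R)}\langle (\mu^\Phi - (\mu+\nu)^\Phi)\nabla u,\nabla\tilde v\rangle\,\d x$, pointwise ellipticity on $Q$ (\cite[Prop.~4.3(b)]{ERe2}) together with Cauchy--Schwarz yields $\int_{Q(x,R)}|\nabla\tilde v|^2\,\d x \leq (d!M^{d+2})^4 \frac{4\tilde\varepsilon^2}{c_\bullet^2} \int_{Q(x,R)}|\nabla u|^2\,\d x$. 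Inserting this and using the choice of $\tilde\varepsilon$ I arrive at
\[
\int_{Q(x,r)} |\nabla u|^2 \; \d x
 \leq 4 c_{DG}\Big(\Big(\frac{r}{R}\Big)^{d-2+2\kappa_0} + \varepsilon\Big) \int_{Q(x,R)} |\nabla u|^2 \; \d x,
\]
and Lemma~\ref{ldir921} (with $B = 0$) upgrades this to the required De Giorgi estimate on $(-\frac12,\frac12)^{d-1} \times (0,\frac12)$ with exponent $d-2+\kappa_0$ and a new constant depending only on the allowed parameters.

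The only step that is not entirely mechanical is the extension-by-zero of $\tilde v$ across $\partial B(x,R) \cap Q$ while preserving membership in $\widehat\W^{1,2}_{N_Q}(Q)$: in the mixed case this is handled by~\cite[Lem.~6.4]{ERe2}, and the same argument works in the purely Neumann case since the only constraint is vanishing on the non-Neumann part of the boundary of the ball, which is automatic from $\tilde v \in \widehat\W^{1,2}_{N_Q(x,R)}(Q(x,R))$. Apart from this, the argument is a verbatim transcription of the proof of Proposition~\ref{pdir922} with $D_Q$ replaced by $\emptyset$ and $N_Q$ replaced by $(-1,1)^{d-1} \times \{0\}$, and all constants depend only on the stated parameters.
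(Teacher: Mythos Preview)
Your proposal is correct and matches the paper's approach exactly: the paper's own proof is a one-line remark that the argument of Proposition~\ref{pdir922} carries over verbatim, using \cite[Lem.~5.1]{ERe2} (the pure-Neumann De Giorgi estimate) in place of \cite[Prop.~5.3]{ERe2}. Your write-up spells out precisely this transcription, including the correct substitution $D_Q = \emptyset$, $N_Q = (-1,1)^{d-1}\times\{0\}$.
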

\begin{proof}
The proof is similar to the proof of Proposition~\ref{pdir922}, 
using~\cite[Lem.~5.1]{ERe2} instead of~\cite[Prop.~5.3]{ERe2}.
\end{proof}

Finally we need a perturbed version of~\cite[Prop.~2.1]{ERe2}.

\begin{proposition} \label{pdir924}
Let $\Omega\subset \IR^d$ be open, $D \subset \partial \Omega$ closed,
and set $N = \partial \Omega \setminus D$.
Let $\Upsilon \subset \overline \Omega$ be a subset.
Suppose there exist $\alpha, \zeta > 0$ such that 
$\dist (N,\Upsilon) \geq \zeta$ and 
$|B(x,r) \setminus \Omega| \geq \alpha r^d$
for all $r \in (0,1]$ and $x \in \partial \Omega$ with $\dist (x,\Upsilon) < \zeta$.

Then there exists an $\varepsilon > 0$ such that for all (complex valued)
$\nu \in \L^{\infty} (\Omega ; \IC^{d \times d})$ satisfying
$\|\nu\|_{\L^{\infty} (\Omega ; \Lop(\IC^d))} < \varepsilon$
the operator $A^{\mu + \nu}$ is m-sectorial.
Moreover there are $\kappa_0 \in (0,1)$ and $c_{DG} > 0$ such that the operator
$A^{\mu + \nu}$ satisfies 
$(\kappa_0,c_{DG})$-De Giorgi estimates on $\Upsilon$ 
for functions vanishing on $D$
and Neumann boundary conditions on $\emptyset$.
The constants $\varepsilon$, $\kappa_0$ and $c_{DG}$ can be chosen to depend
only on $d$, $\zeta$, $\alpha$, $c_\bullet$ and $c^\bullet$.
\end{proposition}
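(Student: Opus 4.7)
The plan is to repeat the structure of the proofs of Propositions~\ref{pdir922} and~\ref{pdir923}, replacing the real-valued De Giorgi estimate of~\cite[Prop.~5.3]{ERe2} by the analogous estimate for $A^\mu$ on $\Upsilon$ taken from~\cite[Prop.~2.1]{ERe2}. The hypotheses of the proposition are designed exactly for this: the condition $\dist(N,\Upsilon) \geq \zeta$ guarantees that for any $R \leq R_0 := 1 \wedge \zeta$ every ball $B(x,R)$ with $x \in \Upsilon$ meets $\partial \Omega$ only in $D$, while the plumpness $|B(x,r) \setminus \Omega| \geq \alpha r^d$ is exactly what is needed in~\cite[Prop.~2.1]{ERe2} to obtain a $(\tilde\kappa_0,\tilde c_{DG})$-De Giorgi estimate for $A^\mu$ on $\Upsilon$ for functions vanishing on $D$, with constants depending only on $d$, $\alpha$, $c_\bullet$ and $c^\bullet$. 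The m-sectoriality of $A^{\mu+\nu}$ for $\|\nu\|_{\L^\infty(\Omega;\Lop(\IC^d))} < c_\bullet$ follows directly from the first part of Lemma~\ref{lem: 601} together with the Lax--Milgram lemma.

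The complex perturbation step proceeds exactly as in Proposition~\ref{pdir922}. Given $x \in \Upsilon$, $0 < R \leq R_0$ and $u \in \W^{1,2}_D(\Omega)$ satisfying the local $A^{\mu+\nu}$-equation against every $v \in \widehat{\W}^{1,2}_\emptyset(\Omega(x,R))$, the Lax--Milgram lemma produces a unique $\tilde v \in \widehat{\W}^{1,2}_\emptyset(\Omega(x,R))$ with
\begin{equation*}
\int_{\Omega(x,R)} \langle \mu \nabla \tilde v, \nabla \varphi \rangle \, \d x
= \int_{\Omega(x,R)} \langle \mu \nabla u, \nabla \varphi \rangle \, \d x
\end{equation*}
for all $\varphi \in \widehat{\W}^{1,2}_\emptyset(\Omega(x,R))$. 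Extending $\tilde v$ by zero gives an element of $\W^{1,2}_D(\Omega)$ by~\cite[Lem.~6.4]{ERe2}, since $N(x,R)=\emptyset$ implies that $\tilde v$ vanishes on the entire boundary of $\Omega(x,R)$. Hence $w := u - \tilde v \in \W^{1,2}_D(\Omega)$ is a local $A^\mu$-solution on $\Omega(x,R)$ to which the $A^\mu$-De Giorgi estimate applies.

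Applying that estimate to $w$, combining it with $|\nabla u|^2 \leq 2|\nabla w|^2 + 2|\nabla \tilde v|^2$, and testing the defining equation for $\tilde v$ against $\tilde v$ itself (then rewriting the right-hand side as $-\int \langle \nu \nabla u, \nabla \tilde v \rangle \, \d x$ via the $(\mu+\nu)$-equation) yields, through ellipticity and the Cauchy--Schwarz inequality,
\begin{equation*}
\int_{\Omega(x,r)} |\nabla u|^2 \, \d x
\leq 4 \tilde c_{DG} \Big( \Big(\frac{r}{R}\Big)^{d - 2 + 2\tilde\kappa_0} + c\, \|\nu\|_{\L^\infty(\Omega;\Lop(\IC^d))}^2 \Big) \int_{\Omega(x,R)} |\nabla u|^2 \, \d x
\end{equation*}
for a constant $c$ depending only on $c_\bullet$ and $\tilde c_{DG}$. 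Choosing $\varepsilon$ so small that $c \varepsilon^2$ lies below the tolerance of Lemma~\ref{ldir921} applied with $\alpha = d - 2 + 2\tilde\kappa_0$, $\beta = d - 2 + \tilde\kappa_0$ and $B = 0$, that lemma upgrades the bound into the claimed $(\kappa_0, c_{DG})$-De Giorgi estimate for $A^{\mu+\nu}$, with $\kappa_0 = \tilde\kappa_0/2$ after relabelling. The entire argument is algebraically a transcription of the proof of Proposition~\ref{pdir922}, and the only genuine point to verify is the zero-extension step, which is clear precisely because $\Upsilon$ is separated from $N$. The real work has therefore already been done in~\cite[Prop.~2.1]{ERe2}, which furnishes the unperturbed Dirichlet De Giorgi estimate under the mere plumpness of $\Omega^c$ without any Lipschitz hypothesis on $D$; no further obstacle arises in the complex perturbation step.
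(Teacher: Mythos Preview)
Your proof is correct and follows essentially the same route as the paper's: replace the real-coefficient De Giorgi estimate from \cite[Prop.~5.3]{ERe2} by the Dirichlet version \cite[Prop.~2.1]{ERe2}, run the same Lax--Milgram/Campanato perturbation argument as in Proposition~\ref{pdir922}, and absorb the $\|\nu\|_{\L^\infty}^2$-term via Lemma~\ref{ldir921}.

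One small point you should make explicit: with $R_0 = 1 \wedge \zeta$ your argument only delivers the De Giorgi inequality for $0 < r \leq R \leq \zeta$, whereas the definition requires it for all $0 < r \leq R \leq 1$. The paper notes this and disposes of it in one line: the case $R \in (\zeta,1]$ follows from the case $R = \zeta$ by the trivial monotonicity $\int_{\Omega(x,\zeta)} |\nabla u|^2 \leq \int_{\Omega(x,R)} |\nabla u|^2$ together with $(r/\zeta)^{d-2+2\kappa_0} \leq \zeta^{-(d-2+2\kappa_0)} (r/R)^{d-2+2\kappa_0}$ (and, when $r > \zeta$, simply $(r/R) \geq \zeta$), at the cost of inflating $c_{DG}$ by a factor depending only on $\zeta$. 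This is harmless since the constants are allowed to depend on $\zeta$, but it should be said.
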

\begin{proof}
The proof is similar to the proof of Proposition~\ref{pdir922}, 
but restrict to the case $R_0 \leq \zeta$.
Of course this time one uses 
\cite[Prop.~2.1]{ERe2} instead of~\cite[Prop.~5.3]{ERe2}.
This gives the De Giorgi estimates for $0 < r \leq R \leq \zeta$.
The case $R \in (\zeta,1]$ follows easily from the case $R = \zeta$.
\end{proof}

\begin{proof}[Proof of Theorem~\ref{tnumrange307}.]
The proof is similar to the proof of~\cite[Thm.~7.5]{ERe2}, with the 
obvious changes to use Propositions~\ref{pdir922}, \ref{pdir923} 
and~\ref{pdir924}.
One deduces the Gaussian kernel bounds of Theorem~\ref{tnumrange307}.
These imply that the semigroup on $\L^2(\Omega)$ extends consistently to 
$\L^p(\Omega)$ for all $p \in [1,\infty)$.
\end{proof}

Finally, we present the proof of Corollary~\ref{Cor: Perturbation translated to p-ellipticity}.

\begin{proof}[Proof of Corollary~\ref{Cor: Perturbation translated to p-ellipticity}]
Let $\eps > 0$ be the $\eps$ from Theorem~\ref{tnumrange307} that belongs to
\textit{real valued} 
matrices with ellipticity constants $c_{\bullet} / 2$ and $2 c^{\bullet}$. 
Choose $p_c > 2$ such that
\begin{align*}
 \frac{(p_c - 1)^{1 / 2}}{p_c - 2} c^{\bullet} 
< \min \bigg\{ \eps , c^{\bullet} , \frac{c_{\bullet}}{2} \bigg\}.
\end{align*}
Now let $\mu \in \L^{\infty} (\Omega ; \IC^{d \times d})$ be subject to
Assumption~\ref{Ass: Ellipticity} and suppose that $p_0(\mu) > p_c$.
Then Lemma~\ref{Lem: Control of imaginary part} gives
\begin{align*}
 \| \Im (\mu) \|_{\L^{\infty} (\Omega ; \Lop (\IC^d))} 
\leq \frac{(p_0(\mu) - 1)^{1 / 2}}{p_0(\mu) - 2} c^{\bullet}
\leq \frac{(p_c - 1)^{1 / 2}}{p_c - 2} c^{\bullet} < \varepsilon.
\end{align*}
Therefore Theorem~\ref{tnumrange307} is applicable with $\mu$ replaced by $\Re
(\mu)$ and 
$\nu = \ii \Im (\mu)$, and the corollary follows.
\end{proof}

\section{Regularity for the induced operators on the $\W^{-1,q}_D$ scale}
\label{Sec: Regularity for the induced operators on the W^{-1,q}_D scale}

In Corollary~\ref{Cor: Maximal regularity on Lp} we 
investigated regularity properties for the divergence operators in
the $\L^p$ scale. In view of parabolic and general evolution equations this is
the most commonly used one, but in the treatment of real world problems there
are at least two effects that make this choice inadequate. These are, on the one
hand, inhomogeneous Neumann boundary conditions and, on the other hand,
reaction terms which live on lower dimensional manifolds. For instance, the
latter is of eminent importance when treating the semiconductor equations where
it is quite common to consider generation/recombination mechanisms that are
situated on surfaces. See~\cite[Section~4.2]{Selberherr} and
\cite[Section~3]{disser} for a detailed discussion from the mathematical
viewpoint. If the above-mentioned phenomena occur, the adequate spaces for the treatment of
semilinear and quasilinear parabolic equations are often spaces from the
$\W^{-1,q}_D$ scale or duals of Bessel potential spaces,
see the detailed discussion in~\cite[Section~6]{haller}.

For this purpose we will deduce parabolic regularity results on the
$\W^{-1,q}_D$ scale from the results received above. Following the philosophy of
\cite{haller} (see also~\cite[Section~11]{auscher} for more advanced ideas) we will
`transport' properties for the divergence operators $-\nabla \cdot \mu \nabla$,
formerly obtained on the $\L^p$ scale, into the $\W^{-1,q}_D$ scale by the
knowledge of the square root isomorphism
\begin{equation} \label{e-eg1}
 \bigl (-\nabla \cdot \mu \nabla +1 \bigr )^{-\frac {1}{2}} \colon \W^{-1,q}_D (\Omega) \to
	\L^q (\Omega).
\end{equation}
The isomorphism property of~\eqref{e-eg1} is deduced in~\cite{auscher} for
real coefficient functions $\mu$, while in his pioneering paper
\cite{egert_kato_Lp} Egert succeeded to prove the isomorphy~\eqref{e-eg1} for
complex coefficient functions (and even for systems) as long as the
corresponding semigroup is well-behaved on $\L^p (\Omega)$.

In order to go into details, we quote and apply the results from~\cite{egert_kato_Lp} that are
relevant for our purposes.

\begin{definition} \label{d-eg}
Let $\Omega \subset \IR^d$ be open and let $\mu \in \L^{\infty} (\Omega ; \IC^{d \times d})$ be 
subject to Assumption~\ref{Ass: Ellipticity}.
Let $J(\mu)$ be the interval of all numbers $p \in (1,\infty)$ for which
 the semigroup generated by $- A_2$ on $\L^2 (\Omega)$ extrapolates consistently 
to a bounded $C_0$-semigroup $S^{(p)}$
 on the space $\L^p(\Omega)$, that is 
$\sup_{t \in (0,\infty)} \|S^{(p)}_t\|_{\mathcal L(\L^p (\Omega))} < \infty$.
\end{definition}

The first result from \cite{egert_kato_Lp} is a bounded holomorphic 
functional calculus, see~\cite[Thm.~1.3]{egert_kato_Lp}.

\begin{theorem} \label{tnumrange702}
Let $\Omega \subset \IR^d$ be open and bounded, where $d \geq 3$.
Let $D \subset \partial \Omega$ be closed and set $N = \partial \Omega \setminus D$.
Adopt Assumption~N and let $\mu \in \L^{\infty} (\Omega ; \IC^{d \times d})$ be 
subject to Assumption~\ref{Ass: Ellipticity}. 
Let $\omega$ denote the half angle of sectoriality for the operator $A_2$.
Further, let $p_1 \in J(\mu)$ and $p \in (p_1,2) \cup (2,p_1)$.
Let $\zeta \in (\omega, \pi)$.
Then there exists a $c > 0$ such that 
\[
   \|f(A_2) u \|_{\L^p (\Omega)} \le c \|f\|_{\L^\infty(\Sec_\zeta)}
	\|u\|_{\L^p (\Omega)}, 
\quad (u \in \L^2 (\Omega) \cap \L^p (\Omega))
\]
for every bounded, holomorphic function $f$ on $\Sec_\zeta $.
The constant $c$ can be chosen to depend only on 
$d$, $M$, $c_\bullet$, $c^\bullet$, $\zeta$, $p$ and 
$\sup_{t \in (0,\infty)} \|S^{(p_1)}_t\|_{\mathcal L(\L^{p_1} (\Omega))}$.
\end{theorem}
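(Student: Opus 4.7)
The plan is to obtain a bounded $\mathrm{H}^{\infty}$-calculus on $\L^2(\Omega)$ first and then transfer it to $\L^p(\Omega)$ by combining the hypothesis $p_1 \in J(\mu)$ with off-diagonal estimates for the semigroup.

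For the $\L^2$-step I would use that $A_2$ is m-sectorial on the Hilbert space $\L^2(\Omega)$ with numerical range in $\overline{\Sec_\omega}$, the latter being immediate from the form representation~\eqref{Eq: Sesquilinear form} and Assumption~\ref{Ass: Ellipticity}. McIntosh's theorem (via the quadratic estimate for the square function $t \mapsto (tA_2)^{1/2}(1+tA_2)^{-1}$, which follows directly from the form bounds and a change of variable in $t$) then yields a bounded $\mathrm{H}^{\infty}(\Sec_\zeta)$-calculus on $\L^2(\Omega)$ for every $\zeta > \omega$, with norm controlled by $c_\bullet$, $c^\bullet$ and $\zeta$ alone.

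To transfer the calculus to $\L^p(\Omega)$ for $p$ between $p_1$ and $2$, I would apply the Blunck--Kunstmann extrapolation scheme for $\mathrm{H}^{\infty}$-calculi. Its hypothesis consists of $\L^{p_1}$--$\L^2$ and $\L^2$--$\L^{p_1}$ off-diagonal Gaffney estimates for the semigroup $(\e^{-zA_2})$ in a complex sector, which I would produce in two sub-steps. First, $\L^2$--$\L^2$ Davies--Gaffney estimates follow from the standard exponential perturbation: conjugating $\mathfrak{t}$ by $\e^{s\varphi}$, with $\varphi \in \C_c^{\infty}(\IR^d)$ real-valued, bounded, and Lipschitz, produces a closed sectorial form on $\W^{1,2}_D(\Omega)$ (Lemma~\ref{Lem: Multiplication by smooth functions} ensures that the Dirichlet condition is preserved), and for $|s|$ small relative to $c_\bullet/c^\bullet$ the perturbed form remains coercive; optimising over $s$ yields the desired Gaffney bound. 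Second, Riesz--Thorin interpolation between these $\L^2$--$\L^2$ bounds and the assumed uniform boundedness of $(\e^{-tA_2})$ on $\L^{p_1}(\Omega)$ produces the required $\L^{p_1}$--$\L^2$ Gaffney estimates.

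The hard part is the abstract extrapolation step. With the $\L^2$ calculus and these off-diagonal estimates in hand, I would follow the tent-space/Hardy-space argument of Blunck--Kunstmann and Auscher--Martell: for $f \in \mathrm{H}^{\infty}(\Sec_\zeta)$, represent $f(A_2)u$ via a Calder\'on reproducing formula based on $(\e^{-tA_2})$, split the resulting integral according to annuli around the support of $u$, control the local terms by the $\L^2$-calculus combined with the $\L^2$--$\L^{p_1}$ Gaffney decay, and dominate the far-field through a weak-type $(p,p)$ bound furnished by the Gaffney estimate. A subtlety specific to the mixed boundary setting is that the truncation functions appearing in the decomposition must preserve $\W^{1,2}_D(\Omega)$, which is again ensured by Lemma~\ref{Lem: Multiplication by smooth functions}. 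The final constants depend only on $d$, $M$, $c_\bullet$, $c^\bullet$, $\zeta$, $p$ and $\sup_{t > 0} \|S^{(p_1)}_t\|_{\mathcal{L}(\L^{p_1}(\Omega))}$, as claimed.
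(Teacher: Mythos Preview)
The paper does not supply its own proof of this theorem; it is quoted directly as \cite[Thm.~1.3]{egert_kato_Lp}. Your sketch is, in broad strokes, the route Egert takes there (an $\L^2$-calculus via McIntosh, Davies--Gaffney bounds from the exponential perturbation trick, interpolation with the assumed $\L^{p_1}$-boundedness, then a Blunck--Kunstmann / Auscher--Martell extrapolation), so at the level of strategy there is nothing to contrast.

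One step is misstated. Riesz--Thorin interpolation between $\L^2$--$\L^2$ Davies--Gaffney bounds and the uniform $\L^{p_1}$--$\L^{p_1}$ bound does \emph{not} produce $\L^{p_1}$--$\L^2$ off-diagonal estimates; it yields $\L^q$--$\L^q$ off-diagonal estimates for each $q$ strictly between $p_1$ and~$2$, with a decay rate that degenerates as $q \to p_1$. These $\L^q$--$\L^q$ bounds are what the extrapolation machinery actually consumes, and by taking $q$ arbitrarily close to $p_1$ one recovers the $\mathrm{H}^\infty$-calculus on $\L^p$ for every $p$ in the open interval. So the overall argument survives, but the intermediate object is mislabelled; genuine $\L^{p_1}$--$\L^2$ Gaffney bounds would require an additional smoothing effect (e.g.\ ultracontractivity or a Sobolev gain) that is not part of the hypotheses here.
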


The second is a topological isomorphism.

\begin{theorem} \label{t-eg1}
Let $\Omega \subset \IR^d$ be open and bounded, where $d \geq 3$.
Let $D \subset \partial \Omega$ be closed and set $N = \partial \Omega \setminus D$.
Suppose that $\Omega$ is a $d$-set and $D$ is a $(d-1)$-set.
Adopt Assumption~N and let $\mu \in \L^{\infty} (\Omega ; \IC^{d \times d})$ be 
subject to Assumption~\ref{Ass: Ellipticity}. 
Then for all
 $p \in (\frac{p_0 (\mu) d}{d(p_0 (\mu) - 1) + 2} ,2]$ the operator 
 \begin{equation} \label{e-wury}
   \bigl (A_p+1 \bigr )^{-\frac {1}{2}} \colon \L^p(\Omega) \to \W^{1,p}_D (\Omega)
 \end{equation}
 is a topological isomorphism.
\end{theorem}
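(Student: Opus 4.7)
The plan is to reduce the statement to Egert's Kato square root result and combine it with the resolvent bounds proven earlier in the paper. First I would observe that for every $p \in (\frac{p_0 (\mu) d}{d(p_0 (\mu) - 1) + 2} , 2]$ Corollary~\ref{cnumrange302} tells us that $-A_p$ generates a bounded analytic semigroup on $\L^p(\Omega)$. In particular $p \in J(\mu)$ in the sense of \defnref{d-eg}, and therefore Theorem~\ref{tnumrange702} applies: the operator $A_p$ admits a bounded $\mathrm{H}^\infty$-calculus of angle strictly less than $\pi/2$. Because $A_p$ is $m$-accretive modulo a shift, $(A_p+1)^{1/2}$ is a well-defined sectorial operator and $(A_p+1)^{-1/2}$ is bounded on $\L^p(\Omega)$ with dense range.

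Next, to show that $(A_p+1)^{-1/2}$ maps $\L^p(\Omega)$ continuously into $\W^{1,p}(\Omega)$, I would use the standard Balakrishnan representation
\[
(A_p + 1)^{-1/2} f
= \frac{1}{\pi} \int_0^\infty t^{-1/2} (t + 1 + A_p)^{-1} f \; \d t
\qquad (f \in \L^p(\Omega)).
\]
Applying $\nabla$ and invoking the gradient bound in Theorem~\ref{Thm: Gradient estimates for resolvent}\ref{Thm: Gradient estimates for resolvent-1} in the form $\|\nabla (t+1+A_p)^{-1} f\|_{\L^p} \leq C (t+1)^{-1/2} \|f\|_{\L^p}$ gives
\[
\| \nabla (A_p + 1)^{-1/2} f \|_{\L^p(\Omega;\IC^d)}
\leq \frac{C}{\pi} \bigg( \int_0^\infty t^{-1/2} (t+1)^{-1/2} \; \d t \bigg) \| f \|_{\L^p(\Omega)}
= C \| f \|_{\L^p(\Omega)},
\]
since the last integral equals $B(1/2,1/2) = \pi$. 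Because each $(t+1+A_p)^{-1} f$ lies in $\dom(A_p) \subset \W^{1,p}_D(\Omega)$ (Theorem~\ref{Thm: Gradient estimates for resolvent}\ref{Thm: Gradient estimates for resolvent-1}) and $\W^{1,p}_D(\Omega)$ is a closed subspace of $\W^{1,p}(\Omega)$, the Bochner integral above actually represents a bounded map into $\W^{1,p}_D(\Omega)$. Injectivity is immediate from the invertibility of $A_p+1$.

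The remaining and hardest point is surjectivity, i.e.\ the reverse estimate
\[
\| u \|_{\W^{1,p}(\Omega)}
\leq C \| (A_p+1)^{1/2} u \|_{\L^p(\Omega)}
\qquad (u \in \dom((A_p+1)^{1/2})),
\]
which identifies the range of $(A_p+1)^{-1/2}$ with all of $\W^{1,p}_D(\Omega)$. This is precisely the $\L^p$-version of the Kato square root property for mixed boundary conditions. I would obtain it by appealing to Egert~\cite{egert_kato_Lp}: the assumptions that $\Omega$ is a $d$-set and that $D$ is a $(d-1)$-set are exactly those under which Egert's square function / $\mathrm{H}^\infty$-calculus machinery, combined with the bounded analytic semigroup on $\L^p(\Omega)$ granted by Corollary~\ref{cnumrange302}, upgrades the $\L^2$-Kato square root identity to $\L^p$. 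In short, one reinterprets the bounded $\mathrm{H}^\infty$-calculus of Theorem~\ref{tnumrange702} as a two-sided square function estimate in $\L^p(\Omega)$, and then uses Egert's atomic/decomposition argument on the $(d-1)$-set $D$ together with the Poincaré and Sobolev inequalities available on the $d$-set $\Omega$ to control $\nabla u$ by $(A_p+1)^{1/2} u$.

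The main obstacle is of course this last step: the upper Kato estimate $\|(A_p+1)^{1/2} u\|_{\L^p} \gtrsim \|u\|_{\W^{1,p}}$ is genuinely delicate and, as emphasised in the excerpt preceding Theorem~\ref{t-eg1}, relies on the geometric hypotheses (the $d$-set and $(d-1)$-set conditions) precisely through Egert's techniques. Everything else in the proof — the boundedness of $(A_p+1)^{-1/2}$ into $\W^{1,p}_D(\Omega)$ — follows from the resolvent gradient bounds already established in Section~\ref{Sec: From weak reverse Holder estimates to operator bounds}.
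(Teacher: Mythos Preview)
Your overall strategy coincides with the paper's: the proof in the paper is the single line ``This follows from Corollary~\ref{cnumrange302} together with \cite[Thm.~1.2(i)]{egert_kato_Lp}.'' That is, one feeds the bounded analytic semigroup on $\L^p(\Omega)$ produced by Corollary~\ref{cnumrange302} into Egert's theorem, which under the $d$-set and $(d-1)$-set hypotheses yields the \emph{full} isomorphism~\eqref{e-wury} at once. Your appeal to Egert for the reverse inequality is therefore exactly right, and in fact already delivers both directions, so the separate Balakrishnan argument for the forward bound is redundant.

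That redundant part, however, contains a genuine error. The integral you write down,
\[
\int_0^\infty t^{-1/2} (t+1)^{-1/2} \; \d t,
\]
is \emph{divergent} (the integrand behaves like $t^{-1}$ at infinity); it is not $B(\tfrac12,\tfrac12)$. The Beta identity reads $B(\tfrac12,\tfrac12)=\int_0^\infty t^{-1/2}(1+t)^{-1}\,\d t=\pi$, with exponent $-1$ rather than $-1/2$ on $(1+t)$. Consequently the gradient resolvent bound $\|\nabla(\lambda+A_p)^{-1}f\|_{\L^p}\leq C|\lambda|^{-1/2}\|f\|_{\L^p}$ from Theorem~\ref{Thm: Gradient estimates for resolvent}\ref{Thm: Gradient estimates for resolvent-1} is by itself \emph{not} enough to make the Balakrishnan integral for $\nabla(A_p+1)^{-1/2}$ absolutely convergent; the Riesz transform bound $\nabla(A_p+1)^{-1/2}\in\Lop(\L^p)$ requires a more refined argument (square function estimates, off-diagonal bounds, etc.), which is precisely part of what \cite{egert_kato_Lp} supplies. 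So either drop the Balakrishnan paragraph entirely and invoke Egert for the whole isomorphism, as the paper does, or replace it by a correct argument.
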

\begin{proof}
This follows from Corollary~\ref{cnumrange302} together with \cite[Thm.~1.2(i)]{egert_kato_Lp}.
\end{proof}

In all what follows, for a given coefficient function $\mu$ and 
$p \in (\frac{p_0 (\mu) d}{d(p_0 (\mu) - 1) + 2} ,\frac{p_0 (\mu) d}{d - 2})$  
we denote by $A_p^{\mu}$ the operator corresponding to $\mu$.
Then $A_p^{(\mu^*)}$ denotes the operator which corresponds to the conjugate coefficient function 
$\mu ^*$. 
It is standard that $(A_2^{(\mu^*)})^* = A_2^{\mu}$.

Noting that $p_0(\mu)$ and $p_0(\mu^*)$ coincide (see Section~\ref{Sec: The elliptic operator}), 
one obtains the following corollary.

\begin{corollary} \label{c-wurz}
Adopt the notation and assumptions of Theorem~\ref{t-eg1} above.
Then for every $p \in \bigl( \frac{p_0 (\mu) d}{d(p_0 (\mu) - 1) + 2} , 2 \bigr]$ the 
adjoint map $\bigl (( A_p^{(\mu^*)} + 1)^{-\frac 12}\bigr )'$
is a topological isomorphism from $\W^{-1,q}_D (\Omega)$ onto $\L^q (\Omega)$
which is consistent with $(A_2^\mu + 1)^{-\frac 12}$.
\end{corollary}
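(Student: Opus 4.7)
The plan is to apply Theorem~\ref{t-eg1} to the conjugate coefficient function $\mu^*$ and then pass to antiduals. First, I would verify that the hypotheses transfer: $\mu^*$ satisfies Assumption~\ref{Ass: Ellipticity} with the same constants $c_\bullet$ and $c^\bullet$ as $\mu$, and the interval of admissible exponents is unchanged because $p_0(\mu^*) = p_0(\mu)$ by \cite[Cor.~5.17]{Carbonaro_Dragicevic}. Theorem~\ref{t-eg1} applied to $\mu^*$ therefore asserts that for every $p \in \bigl(\frac{p_0(\mu) d}{d(p_0(\mu)-1) + 2}, 2\bigr]$ the map
\[
\bigl(A_p^{(\mu^*)}+1\bigr)^{-1/2} \colon \L^p(\Omega) \to \W^{1,p}_D(\Omega)
\]
is a topological isomorphism.

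Next, I would invoke the functorial fact that the Banach-space antidual of a topological isomorphism is again a topological isomorphism. By the definitions recalled in Section~\ref{Sec: Notation and preliminary results}, $\W^{-1,q}_D(\Omega)$ is the antidual of $\W^{1,p}_D(\Omega)$ (with $q = p'$) and $\L^q(\Omega)$ is the antidual of $\L^p(\Omega)$. Taking antiduals of the isomorphism above thus immediately delivers the desired topological isomorphism $\bigl((A_p^{(\mu^*)}+1)^{-1/2}\bigr)' \colon \W^{-1,q}_D(\Omega) \to \L^q(\Omega)$.

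For the consistency assertion, I would use that $A_2^\mu$ is the $\L^2$-adjoint of $A_2^{(\mu^*)}$, which is immediate from the definition of these operators via the sesquilinear form $\ta$ and its complex conjugate. By the Hilbert-space functional calculus for m-sectorial operators this identity propagates to
\[
\bigl((A_2^{(\mu^*)}+1)^{-1/2}\bigr)^* = (A_2^\mu+1)^{-1/2}
\]
as bounded operators on $\L^2(\Omega)$. On the dense subspace $\L^2(\Omega) \cap \L^q(\Omega)$ of $\W^{-1,q}_D(\Omega)$, the Banach-space antidual pairing $\W^{-1,q}_D \times \W^{1,p}_D \to \IC$ restricts to the $\L^2$-inner product, so the Banach-space antidual of $(A_p^{(\mu^*)}+1)^{-1/2}$ agrees there with the Hilbert-space adjoint, and hence with $(A_2^\mu+1)^{-1/2}$.

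The main (rather minor) obstacle is the bookkeeping in this last step: one must check carefully that the two notions of adjoint—antidual between $\W^{-1,q}_D$ and $\L^q$ on the one hand, Hilbert adjoint on $\L^2$ on the other—coincide on the overlap. This reduces to the observation that both pairings are realised by the same integral against test functions in $\W^{1,p}_D(\Omega) \cap \L^2(\Omega)$, a subspace which is dense in $\W^{1,p}_D(\Omega)$. Beyond this, no new analytic input is required: the entire corollary is a symmetric echo of Theorem~\ref{t-eg1} combined with the self-duality of the square-root functional calculus.
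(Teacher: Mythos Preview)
Your proposal is correct and follows exactly the approach the paper takes: the paper merely notes before the corollary that $p_0(\mu) = p_0(\mu^*)$ (referring to Section~\ref{Sec: The elliptic operator}) and states the result without further proof, leaving the duality argument and consistency check implicit. Your write-up is in fact more detailed than what the paper supplies, particularly in spelling out why the Banach-space antidual and the Hilbert-space adjoint agree on the overlap.
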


\begin{definition} \label{d-defwminq}
Adopt the notation and assumptions of Theorem~\ref{t-eg1}.
 For all $q \in [2, \frac{p_0 (\mu) d}{d - 2})$ we denote 
by $E_q^\mu \colon \W^{-1,q}_D (\Omega) \to \L^q (\Omega)$ the adjoint map of
\[ \bigl( A_p^{(\mu^*)}+1 \bigr )^{-\frac {1}{2}} \colon \L^p(\Omega) \to \W^{1,p}_D (\Omega).
\]
\end{definition}
We emphasise that $E_q^\mu : W^{-1,q}_D(\Omega) \to L^q(\Omega)$ is a topological isomorphism.
Let $q \in [2, \frac{p_0 (\mu) d}{d - 2})$.
Then $-A_q^\mu$ is the generator of a $C_0$-semigroup $S^{(q)}$ on $\L^q(\Omega)$
by Corollary~\ref{cnumrange302}.
We use $E_q^\mu$ to transfer $S^{(q)}$ to a $C_0$-semigroup $T^{(q)}$ on 
$\W^{-1,q}_D (\Omega)$ defined by 
\[
T^{(q)}_t = (E_q^\mu)^{-1} \, S^{(q)}_t \, E_q^\mu
.  \]
We denote the generator of $T^{(q)}$ by $- B_q^\mu$.
Next, define $\mathcal{A} \colon \W^{1,2}_D (\Omega) \to \W^{-1,2}_D (\Omega)$
by $(\mathcal{A} u)(v) = \ta[u,v]$ for all $u,v \in \W^{1,2}_D (\Omega)$,
where $\mathfrak{t}$ was defined in~\eqref{Eq: Sesquilinear form}.
We consider $\mathcal{A}$ as a densily defined operator in $\W^{-1,2}_D (\Omega)$.

\begin{lemma} \label{lnumrange706}
Adopt the notation and assumptions of Theorem~\ref{t-eg1}.
Let $q \in [2, \frac{p_0 (\mu) d}{d - 2})$.
Then $B_q^\mu$ is the part of $\mathcal{A}$ in $\W^{-1,q}_D (\Omega)$, that is 
$\dom(B_q^\mu) = \{ u \in \W^{-1,q}_D (\Omega) : \mathcal{A} u \in \W^{-1,q}_D (\Omega) \} $
and $B_q^\mu u = \mathcal{A} u$ for all $u \in \dom(B_q^\mu)$.
\end{lemma}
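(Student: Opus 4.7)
The plan is to unfold the similarity definition of $B_q^\mu$ and match it with the part of $\mathcal{A}$ in $\W^{-1,q}_D(\Omega)$ by means of a square-root intertwining identity. By construction, $u \in \dom(B_q^\mu)$ if and only if $w := E_q^\mu u \in \dom(A_q^\mu)$, with $B_q^\mu u = (E_q^\mu)^{-1} A_q^\mu w$; since $A_q^\mu$ is the part of $A_2^\mu$ in $\L^q(\Omega)$, the task is to rewrite this condition in terms of $\mathcal{A}$ acting on $\W^{-1,q}_D(\Omega)$.

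The crux is the intertwining
\begin{equation}
\label{eplnumrange706;1}
E_q^\mu (\mathcal{A} + I) u = (A_2^\mu + I)^{1/2} u \qquad (u \in \W^{1,2}_D(\Omega)),
\end{equation}
where $(A_2^\mu + I)^{1/2}$ denotes the bounded isomorphism $\W^{1,2}_D(\Omega) \to \L^2(\Omega)$ supplied by Theorem~\ref{t-eg1}. I would establish \eqref{eplnumrange706;1} by pairing both sides with an arbitrary $g \in \L^2(\Omega)$ and invoking Corollary~\ref{c-wurz} to identify the left-hand pairing with $\langle (\mathcal{A}+I) u, (A_2^{\mu^*}+I)^{-1/2} g\rangle$. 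Setting $v := (A_2^{\mu^*}+I)^{-1/2} g \in \W^{1,2}_D(\Omega)$ reduces \eqref{eplnumrange706;1} to the form identity
\[
\ta[u,v] + (u,v)_{\L^2} = \bigl((A_2^\mu + I)^{1/2} u ,\, (A_2^{\mu^*}+I)^{1/2} v \bigr)_{\L^2},
\]
which one checks first for $u \in \dom(A_2^\mu)$ and $v \in \W^{1,2}_D(\Omega)$, where both sides equal $((A_2^\mu+I) u, v)_{\L^2}$, and then extends by joint continuity of both sides on $\W^{1,2}_D(\Omega) \times \W^{1,2}_D(\Omega)$ together with the density of $\dom(A_2^\mu)$ in $\W^{1,2}_D(\Omega)$.

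Granted \eqref{eplnumrange706;1}, both inclusions of the lemma follow by short chases. For the inclusion $\dom(B_q^\mu) \subset \{u \in \W^{-1,q}_D(\Omega) : \mathcal{A} u \in \W^{-1,q}_D(\Omega)\}$, I take $u \in \dom(B_q^\mu)$, note that $w \in \dom(A_q^\mu) \subset \dom(A_2^\mu) \subset \W^{1,2}_D(\Omega)$, and apply \eqref{eplnumrange706;1} to $w$ to identify $u = (A_2^\mu+I)^{1/2} w - w \in \W^{1,2}_D(\Omega)$; a second application of \eqref{eplnumrange706;1} to $u$ gives $E_q^\mu(\mathcal{A}+I) u = (A_q^\mu + I) w \in \L^q(\Omega)$, whence $\mathcal{A} u \in \W^{-1,q}_D(\Omega)$ and $B_q^\mu u = \mathcal{A} u$. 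For the reverse inclusion, I start with $u \in \W^{-1,q}_D(\Omega) \cap \W^{1,2}_D(\Omega)$ satisfying $\mathcal{A} u \in \W^{-1,q}_D(\Omega)$, apply \eqref{eplnumrange706;1} directly to conclude $(A_2^\mu+I)^{1/2} u = E_q^\mu(\mathcal{A}+I) u \in \L^q(\Omega)$, and apply \eqref{eplnumrange706;1} once more to $w = E_q^\mu u \in \W^{1,2}_D(\Omega)$ to obtain $(A_2^\mu+I) w = (A_2^\mu+I)^{1/2} u \in \L^q(\Omega)$; hence $w \in \dom(A_q^\mu)$ and $u \in \dom(B_q^\mu)$. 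The main obstacle is the intertwining \eqref{eplnumrange706;1} itself, which is a Kato-type form identity married to the duality of Corollary~\ref{c-wurz}; once it is in place, the rest of the argument is a routine transport through the isomorphism $E_q^\mu$.
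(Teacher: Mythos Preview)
Your overall strategy is sound and is presumably what the cited argument in \cite[Lem.~6.9(c)]{DisserterElstRehberg2} does: the paper itself gives no details here, so you are supplying what it omits. The intertwining identity is the right engine, and once it is in place the two inclusions are indeed routine transports through the isomorphism.

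Two points need repair. First, as written your identity \eqref{eplnumrange706;1} applies $E_q^\mu$ to $(\mathcal{A}+I)u$ for an arbitrary $u \in \W^{1,2}_D(\Omega)$; but $(\mathcal{A}+I)u$ lies only in $\W^{-1,2}_D(\Omega)$ in general, so $E_q^\mu$ is not defined on it. State the identity with $E_2^\mu$ instead (your pairing proof with $g \in \L^2$ is really a $q=2$ computation anyway), and then invoke the consistency $E_q^\mu = E_2^\mu|_{\W^{-1,q}_D(\Omega)}$, which follows from Corollary~\ref{c-wurz} and the consistency of the fractional powers on $\L^p$, whenever you need the $\L^q$ conclusion.

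Second, your formula $u = (A_2^\mu+I)^{1/2} w - w$ is incorrect; the right relation is $u = (A_2^\mu+I)^{1/2} w$. This does not come from applying \eqref{eplnumrange706;1} to $w$ but simply from the consistency $E_2^\mu|_{\L^2(\Omega)} = (A_2^\mu+I)^{-1/2}$: since $w \in \dom(A_q^\mu) \subset \W^{1,2}_D(\Omega) \cap \L^q(\Omega)$, one has $u = (E_q^\mu)^{-1} w = (E_2^\mu)^{-1} w = (A_2^\mu+I)^{1/2} w$, and $w \in \dom(A_2^\mu)$ then gives $u \in \W^{1,2}_D(\Omega)$. With this correction your second application of the intertwining identity yields $E_2^\mu(\mathcal{A}+I)u = (A_2^\mu+I)^{1/2} u = (A_2^\mu+I)w = (A_q^\mu+I)w \in \L^q(\Omega)$, and the forward inclusion follows as you indicate. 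The reverse inclusion is fine as stated once you read $E_q^\mu$ consistently with $E_2^\mu$.
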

\begin{proof}
This follows as in \cite[Lem.~6.9(c)]{DisserterElstRehberg2}.
\end{proof}

\begin{theorem} \label{t-imag}
Let $\Omega \subset \IR^d$ be open and bounded, where $d \geq 3$.
Let $D \subset \partial \Omega$ be closed and set $N = \partial \Omega \setminus D$.
Suppose that $\Omega$ is a $d$-set and $D$ is a $(d-1)$-set.
Adopt Assumption~N and let $\mu \in \L^{\infty} (\Omega ; \IC^{d \times d})$ be 
subject to Assumption~\ref{Ass: Ellipticity}. 
Let $\omega$ denote the half angle of sectoriality for the operator $A_2$.
Further let $q \in [2, \frac{p_0 (\mu) d}{d - 2})$.
Then one has the following.
 \begin{tabel}
\item \label{holo} For all $\zeta \in (\omega,
	\pi/2)$ the operator $B^\mu_q +1$
	admits a bounded holomorphic calculus on the sector $\Sec_\zeta$.
	In particular, $B^\mu_q +1$  admits bounded imaginary powers on
	$\W^{-1,q}_D (\Omega)$ and
	\[ \sup_{s \in [-1,1]} \|(B^\mu_q +1 )^{\ii s}\|_{\mathcal L(\W^{-1,q}_D (\Omega))} < \infty.
	\]
\item \label{maxpareg} The operator $B^\mu_q$ has maximal 
	regularity on $\W^{-1,q}_D (\Omega)$.
 \end{tabel}
\end{theorem}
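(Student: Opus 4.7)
The overall strategy is to pull back the bounded $H^\infty$-calculus and maximal regularity of $A_q^\mu$ on $\L^q(\Omega)$ to the corresponding statements for $B_q^\mu$ on $\W^{-1,q}_D(\Omega)$ via the topological isomorphism $E_q^\mu$ of Definition~\ref{d-defwminq}. Combining Lemma~\ref{lnumrange706} with the construction of the transported semigroup $T^{(q)}$ yields the similarity
\begin{equation*}
B_q^\mu + 1 = (E_q^\mu)^{-1} (A_q^\mu + 1)\, E_q^\mu,
\end{equation*}
so every functional-calculus identity for $A_q^\mu + 1$ transfers to $B_q^\mu + 1$ with a constant depending only on $\|E_q^\mu\| \cdot \|(E_q^\mu)^{-1}\|$.

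The first step is to establish a bounded $H^\infty(\Sec_\zeta)$-calculus for $A_q^\mu$ on $\L^q(\Omega)$ for every $\zeta \in (\omega, \pi/2)$. In the case $q = 2$ this is classical, as $A_2$ is the m-sectorial operator associated to the sectorial form $\ta$ and McIntosh's theorem applies. In the case $q > 2$, pick $p_1 \in (q, \frac{p_0(\mu) d}{d - 2})$; by Corollary~\ref{cnumrange302}, $-A_{p_1}^\mu$ generates a bounded analytic semigroup on $\L^{p_1}(\Omega)$, so $p_1 \in J(\mu)$. Theorem~\ref{tnumrange702} applied with this $p_1$ and $p = q \in (2, p_1)$ then delivers the desired calculus. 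Since adding the identity does not enlarge the sectoriality angle, $A_q^\mu + 1$ enjoys the same bounded $H^\infty(\Sec_\zeta)$-calculus.

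Assertion~\ref{holo} now follows from the similarity: for $f \in H^\infty(\Sec_\zeta)$,
\begin{equation*}
f(B_q^\mu + 1) = (E_q^\mu)^{-1} f(A_q^\mu + 1)\, E_q^\mu,
\end{equation*}
and the $\Lop(\W^{-1,q}_D(\Omega))$-norm of the left-hand side is bounded by $\|(E_q^\mu)^{-1}\| \cdot \|E_q^\mu\|$ times the $\L^q$-calculus bound. The uniform estimate on $(B_q^\mu + 1)^{\ii s}$ for $s \in [-1,1]$ is the specialisation $f(z) = z^{\ii s}$, using $\|z \mapsto z^{\ii s}\|_{H^\infty(\Sec_\zeta)} \leq \e^{|s| \zeta}$. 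For Assertion~\ref{maxpareg}, Corollary~\ref{Cor: Maximal regularity on Lp} gives maximal regularity of $A_q^\mu$ on $\L^q(\Omega)$; transferring through $E_q^\mu$ (given $f \in \L^{q_1}(0,T; \W^{-1,q}_D(\Omega))$, solve $v' + A_q^\mu v = E_q^\mu f$ and set $u = (E_q^\mu)^{-1} v$) yields maximal regularity for $B_q^\mu$ on $\W^{-1,q}_D(\Omega)$. Alternatively, this is a direct consequence of part~\ref{holo} via Kalton--Weis \cite{Kalton_Weis}.

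The main technical obstacle is to justify the similarity identity rigorously on operator domains. By Corollary~\ref{c-wurz}, $E_q^\mu$ is consistent with $(A_2^\mu + 1)^{-1/2}$ and hence intertwines $(A_2^\mu + 1)^{1/2}$ with itself on the appropriate intersection; by Lemma~\ref{lnumrange706}, $B_q^\mu$ is the part of the form operator $\mathcal{A}$ in $\W^{-1,q}_D(\Omega)$. Combining these two identifications establishes the similarity on a dense core, and closedness of both sides then extends it to all of $\dom(B_q^\mu)$.
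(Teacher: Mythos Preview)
Your proposal is correct and follows essentially the same approach as the paper: establish the bounded $H^\infty$-calculus for $A_q^\mu + 1$ on $\L^q(\Omega)$ via Theorem~\ref{tnumrange702} (using $[2,\frac{p_0(\mu)d}{d-2}) \subset J(\mu)$ from Corollary~\ref{cnumrange302}) and transfer it to $B_q^\mu + 1$ through the isomorphism $E_q^\mu$. The only minor differences are that the paper cites \cite[Prop.~2.11]{Denk} for the transfer and deduces part~\ref{maxpareg} from bounded imaginary powers via Dore--Venni~\cite{dore}, whereas you write out the similarity explicitly and invoke Kalton--Weis or direct transport; note also that your final paragraph is unnecessary, since the similarity $B_q^\mu = (E_q^\mu)^{-1} A_q^\mu\, E_q^\mu$ is immediate from the very definition of $B_q^\mu$ as the generator of the conjugated semigroup $T^{(q)}$.
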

\begin{proof}
`\ref{holo}'.
Since $[2, \frac{p_0 (\mu) d}{d - 2}) \subset J(\mu)$ by Corollary~\ref{cnumrange302},
it follows that the operator $A_q^\mu + 1$ has a bounded holomorphic calculus on 
$\L^q(\Omega)$ by Theorem~\ref{tnumrange702}.
Since $E_q^\mu$ is a topological isomorphism the bounded holomorphic calculus
transfers to $\W^{-1,q}_D (\Omega)$,
	with the same angle as for $A^\mu_q$, is implied by~\cite[Prop.~2.11]{Denk}. 
The boundedness of the purely imaginary powers
	follows from this, see~\cite[p.~25]{Denk}.

`\ref{maxpareg}'.
The operator $B^\mu_q + 1$ has maximal parabolic regularity by Statement~\ref{holo} and 
\cite{dore}.
Therefore also the operator $B^\mu_q$ has maximal parabolic regularity.
\end{proof}

\begin{rem} \label{r-bessel}
\mbox{}
 \begin{itemize}
  \item  It is known that maximal parabolic regularity is preserved under (real
	and complex) interpolation, see~\cite[Lemma~5.3]{haller}. Using the
    interpolation results from~\cite{Eb}, this shows that the
minus generator of the consistent $C_0$-semigroup on the (dual of) Bessel potential spaces
	with non-integer differentiability index between $-1$ and $0$ also satisfies maximal parabolic
	regularity. This considerably generalizes the results 
in~\cite[Thm.~5.16 and Section~6]{haller} in view of complex coefficients
	and much more general admissible geometries for $\Omega$ and $D$.
  \item If one is only interested in maximal parabolic regularity, it is also
	possible to transport this property directly from $\L^q (\Omega)$ to $\W^{-1,q}_D (\Omega)$
	by the square root isomorphism~\eqref{e-eg1}, see~\cite[Lem.~5.12]{HaR3}.
 \end{itemize}
\end{rem}


\begin{thebibliography}{49}




\bibitem{Astala_Faraco_Szekelyhidi}
K.~Astala, D.~Faraco, and L.~Sz\'ekelyhidi Jr.:
Convex integration and the $L^p$ theory of elliptic equations.
\emph{Ann.\@ Sc.\@ Norm.\@ Super.\@ Pisa Cl.\@ Sci.\@ (5)} {\bfseries 7}
(2008), no.~1, 1--50.

\bibitem{Aus1}
P.~Auscher: Regularity theorems and heat kernels for elliptic operators.
\emph{J.\@ London Math.\@ Soc. (2)} {\bfseries 54} (1996),  284--296.

\bibitem{Auscher_Riesz}
P.~Auscher: On necessary and sufficient conditions for $L^p$-estimates of Riesz
transforms associated to elliptic operators on $\IR^n$ and related estimates.
\emph{Mem.\@ Amer.\@ Math.\@ Soc.\@} {\bfseries 186} (2007), no.~871.

\bibitem{auscher}
P.~Auscher, N.~Badr, R.~Haller-Dintelmann, and J.~Rehberg: The square root
problem for second-order, divergence form operators with mixed boundary
conditions on $L^p$.
\emph{J.\@ Evol.\@ Equ.} {\bfseries 15} (2015), no.~1, 165--208.

\bibitem{ABBO}
P.~Auscher, L.~Barth\'elemy, Ph.~B\'enilan, and E.~M.~Ouhabaz:
Absence de la $L^\infty$-contractivit\'e pour les semi-groupes associ\'es aux
op\'erateurs elliptiques complexes sous forme divergence [Absence of
$L^\infty$-contractivity for semigroups associated to complex elliptic
operators in divergence form].
\emph{Potential Anal.} {\bfseries 12} (2000), no.~2, 169--189.

\bibitem{auscher2}
P.~Auscher, T.~Coulhon, and Ph.~Tchamitchian: Absence de principe du maximum
pour certaines \'equations paraboliques complexes [Absence of the maximum
principle for certain complex parabolic equations].
\emph{Colloq.\@ Math.} {\bfseries 71} (1996), no.~1, 87--95.

\bibitem{Eb}
S.~Bechtel and M.~Egert: Interpolation theory for Sobolev functions with
partially vanishing trace on irregular open sets.
Available on arXiv:1807.02293.

\bibitem{Blunck_Kunstmann}
S.~Blunck and P.~C.~Kunstmann: Calder\'on-Zygmund theory for non-integral
operators and the $H^{\infty}$ functional calculus.
\emph{Rev.\@ Mat.\@ Iberoamericana} \textbf{19} (2003), no.~3, 919--942.

\bibitem{Carbonaro_Dragicevic}
A.~Carbonaro and O.~Dragi\v{c}evi\'c: Convexity of power functions and bilinear
embedding for divergence-form operators with complex coefficients. \emph{J.\@
Eur.\@ Math.\@ Soc. (JEMS)}, to appear. Available on arXiv:1611.00653.

\bibitem{cialdea/mazya}
A.~Cialdea and V.~G.~Maz'ya: Criterion for the $L^p$-dissipativity of second
order differential operators with complex coefficients. \emph{J.\@ Math.\@
Pures Appl. (9)} {\bfseries 84} (2005), no.~8, 1067--1100.

\bibitem{cialdea/mazya/systems}
A.~Cialdea and V.~G.~Maz'ya: Criteria for the $L^p$-dissipativity of systems of
second order differential equations.
\emph{Ric.\@ Mat.} {\bfseries 55} (2006), no.~2, 233--265.

\bibitem{cialdea/mazya/lame}
A.~Cialdea and V.~G.~Maz'ya: $L^p$-dissipativity of the Lam\'e operator.
\emph{Mem.\@ Differ.\@ Equ.\@ Math.\@ Phys.} {\bfseries 60} (2013), 111--133.

\bibitem{CiaM2}
A.~Cialdea and V.~G.~Maz'ya: \emph{Semi-bounded differential operators,
contractive semigroups and beyond}.
Vol.~243 of Operator Theory: Advances and Applications.
Birkh{\"a}user/Springer, Cham, 2014.

\bibitem{coulhon}
T.~Coulhon and X.~T.~Duong: Maximal regularity and kernel bounds: observations
on a theorem by Hieber and Pr{\"u}ss.
\emph{Adv.\@ Differential Equations} {\bfseries 5} (2000), no.~1--3, 343--368.

\bibitem{cowling}
M.~G.~Cowling: Harmonic analysis on semigroups.
\emph{Ann. of Math. (2)} {\bfseries 117} (1983), no.~2, 267--283.

\bibitem{Davies_Analyticity}
E.~B.~Davies: Uniformly elliptic operators with measurable coefficients.
\emph{J.\@ Funct.\@ Anal.\@} {\bfseries 132} (1995), no.~1, 141--169.

\bibitem{davies}
E.~B.~Davies: Limits on $L^p$ regularity of self-adjoint elliptic operators.
\emph{J.\@ Differential Equations} {\bfseries 135} (1997), no.~1, 83--102.

\bibitem{Denk}
R.~Denk, M.~Hieber, and J.~Pr\"uss: $\mathcal{R}$-boundedness, Fourier
multipliers and problems of elliptic and parabolic type.
\emph{Mem.\@ Amer.\@ Math.\@ Soc.} {\bfseries 166} (2003), no.~788.

\bibitem{Dindos_Pipher}
M.~Dindo\v{s} and J.~Pipher: Regularity theory for solutions to second order
elliptic operators with complex coefficients and the $L^p$ Dirichlet problem.
\emph{Adv.\@ Math.} {\bfseries 341} (2019), 255--298.

\bibitem{DisserterElstRehberg2}
K.~Disser, A.~F.~M.~ter Elst, and J.~Rehberg: On maximal parabolic regularity
for non-autonomous parabolic operators.
\emph{J.\@ Differential Equations} {\bfseries 262} (2017), no.~3, 2039--2072.

\bibitem{disser}
K.~Disser, H.-C.~Kaiser and J.~Rehberg: Optimal Sobolev regularity for linear
second-order divergence elliptic operators occurring in real-world problems.
\emph{SIAM J.\@ Math.\@ Anal.} {\bfseries 47} (2015), no.~3, 1719--1746.

\bibitem{dore}
G.~Dore and A.~Venni: On the closedness of the sum of two closed operators,
\emph{Math. Z.} {\bfseries 196} (1987), no.~2, 189 -- 201.

\bibitem{duongrobin}
X.~T.~Duong and D.~W.~Robinson: Semigroup kernels, Poisson bounds, and
holomorphic functional calculus.
\emph{J.\@ Funct.\@ Anal.} {\bfseries 142} (1996), no.~1, 89--128.

\bibitem{egert}
M.~Egert, R.~Haller-Dintelmann, and J.~Rehberg: Hardy's inequality for
functions vanishing on a part of the boundary.
\emph{Potential Anal.} {\bfseries 43} (2015), no.~1, 49--78.

\bibitem{egert_kato_Lp}
M.~Egert: $L^p$ estimates for the square root of elliptic systems with mixed
boundary conditions.
\emph{J.\@ Differential Equations} {\bfseries 265} (2018), no.~4, 1279--1323.

\bibitem{Moritz_neu}
M.~Egert: On $p$-elliptic divergence form operators and holomorphic semigroups.
Available on arXiv:1812.09154.

\bibitem{ELSV}
A.~F.~M.~ter Elst, V.~Liskevich, Z.~Sobol, and H.~Vogt: On the $L^p$-theory of
$C_0$-semigroups associated with second-order elliptic operators with complex
singular coefficients.
\emph{Proc.\@ Lond.\@ Math.\@ Soc. (3)} {\bfseries 115} (2017), no.~4,
693--724.

\bibitem{tom1}
A.~F.~M.~ter Elst and J.~Rehberg: $L^\infty$-estimates for divergence operators
on bad domains.
\emph{Anal.\@ Appl.\@ (Singap.)} {\bfseries 10} (2012), No.~2, 207--214.

\bibitem{ERe2}
A.~F.~M.~ter Elst and J.~Rehberg: H{\"o}lder estimates for second-order
operators on domains with rough boundary.
\emph{Adv.\@ Differential Equations} {\bfseries 20} (2015), no.~3--4, 299--360.

\bibitem{Giaquinta}
M.~Giaquinta: \emph{Multiple integrals in the calculus of variations and
nonlinear elliptic systems}.
Vol.~105 of Annals of Mathematics Studies, Princeton University Press,
Princeton, NJ, 1983.

\bibitem{Giaquinta_Martinazzi}
M.~Giaquinta and L.~Martinazzi: \emph{An introduction to the regularity theory
for elliptic systems, harmonic maps and minimal graphs}.
Scuola Normale Superiore di Pisa (Nuova Serie), Edizioni della Normale, Pisa,
2012.

\bibitem{Gilbarg_Trudinger}
D.~Gilbarg and N.~S.~Trudinger: \emph{Elliptic partial differential equations
of second order}. Classics in Mathematics. Springer, Berlin, 2001.

\bibitem{Grie}
J.~A.~Griepentrog, H.-C.~Kaiser, and J.~Rehberg: Heat kernel and resolvent
properties for second order elliptic differential operators with general
boundary conditions on $L^p$.
\emph{Adv.\@ Math.\@ Sci.\@ Appl.} {\bfseries 11} (2001), no.~1, 87--112.

\bibitem{Haase}
M.~Haase: \emph{The functional calculus for sectorial operators}.
Vol.~169 of Operator Theory: Advances and Applications. Birkh{\"a}user Verlag,
Basel, 2006.

\bibitem{haller}
R.~Haller-Dintelmann and J.~Rehberg: Maximal parabolic regularity for
divergence operators including mixed boundary conditions. 
\emph{J.\@ Differential Equations} {\bfseries 247} (2009), no.~5, 1354--1396.

\bibitem{HaR3}
R.~Haller-Dintelmann and J.~Rehberg: Maximal parabolic regularity for
divergence operators on distribution spaces.
In J.~Escher et al.\@ (eds.): \emph{Parabolic Problems, The Herbert Amann
Festschrift}.
Vol.~80 of Progress in Nonlinear Differential Equations and their Applications.
Birkh{\"a}user, Basel, 2011, pp. 313--342.

\bibitem{HiebPruess} 
M.~Hieber and J.~Pr\"uss: Heat kernels and maximal $L^p$ -$L^q$ estimates for
parabolic evolution equations.
\emph{Comm.\@ Partial Differential Equations} {\bfseries 22} (1997), no.~9-10,
1647--1669.

\bibitem{hofmann}
S.~Hofmann, S.~Mayboroda, and A.~McIntosh: Second order elliptic operators with
complex bounded measurable coefficients in $L^p$, Sobolev and Hardy spaces.
\emph{Ann.\@ Sci.\@ \'Ec.\@ Norm.\@ Sup\'er.\@ (4)} {\bfseries 44} (2011),
no.~5, 723--800.

\bibitem{hoemb}
D.~H\"omberg, K.~Krumbiegel, and J.~Rehberg: Optimal control of a parabolic
equation with dynamic boundary condition. 
\emph{Appl.\@ Math.\@ Optim.} {\bfseries 67} (2013), no.~1, 3--31.

\bibitem{Kalton_Weis}
N.~J.~Kalton and L.~Weis: The $H^{\infty}$-calculus and sums of closed
operators.
\emph{Math.\@ Ann.} {\bfseries 321} (2001), no.~2, 319--345.

\bibitem{kunst}
P.~C.~Kunstmann and L.~Weis: Maximal $L^p$-regularity for parabolic equations,
Fourier multiplier theorems and $H^\infty$-functional calculus.
In: M.~Iannelli et al.\@ (eds.): Functional analytic methods for evolution
equations.
Vol.~1855 of Lecture Notes in Mathematics, Springer, Berlin, 2004, pp. 65--311.

\bibitem{lambert}
D.~Lamberton: \'Equations d'\'evolution lin\'eaires associ\'ees \`a des
semi-groupes de contractions dans les espaces $L^p$ [Linear evolution equations
associated with contraction semigroups in Lp-spaces].
\emph{J. Funct. Anal.} {\bfseries 72} (1987), no.~2, 252--262.

\bibitem{mazya}
V.~G.~Maz'ya, S.~A.~Nazarov, and B.~A.~Plamenevskij: Absence of De Giorgi-type
theorems for strongly elliptic equations with complex coefficients.
\emph{J.\@ Soviet Math.} {\bfseries 28} (1985), no.~5, 726--734.

\bibitem{Meyers}
N.~G.~Meyers: An $L^p$-estimate for the gradient of solutions of second order
elliptic divergence equations.
\emph{Ann.\@ Scuola Norm.\@ Sup.\@ Pisa (3)} {\bfseries 17} (1963), 189--206.

\bibitem{Ouh05}
E.~M.~Ouhabaz: \emph{Analysis of heat equations on domains}.
Vol.~31 of London Mathematical Society Monographs Series, Princeton University
Press, Princeton, 2005.

\bibitem{pazy}
A.~Pazy: \emph{Semigroups of linear operators and applications to partial
differential equations}. Vol.~44 of Applied Mathematical Sciences, Springer,
New York, 1983.

\bibitem{Selberherr}
S.~Selberherr: \emph{Analysis and simulation of semiconductor devices}.
Springer, Wien, 1984.

\bibitem{Shen}
Z.~Shen: Bounds of Riesz transforms on $L^p$ spaces for second order elliptic
operators.
\emph{Ann.\@ Inst.\@ Fourier (Grenoble)} \textbf{55} (2005), no.~1, 173--197.

\bibitem{Tolksdorf}
P.~Tolksdorf: $\mathcal{R}$-sectoriality of higher-order elliptic systems on
general bounded domains.
\emph{J.\@ Evol.\@ Equ.\@} {\bfseries 18} (2018), no.~2, 323--349.

\end{thebibliography}
\end{document}